\documentclass{article}
\usepackage[utf8]{inputenc}
\usepackage[T1]{fontenc}
\usepackage{lmodern}
\usepackage{amsmath,amssymb,amsthm,mathtools}
\usepackage[margin=1in]{geometry}
\usepackage{microtype}
\usepackage{enumitem}
\usepackage[hidelinks]{hyperref}
\usepackage{comment}

\newtheorem{theorem}{Theorem}
\newtheorem{proposition}{Proposition}
\newtheorem{corollary}{Corollary}
\newtheorem{lemma}{Lemma}
\newtheorem{remark}{Remark}
\theoremstyle{definition}
\newtheorem{definition}{Definition}

\newcommand{\Z}{\mathbb Z}
\newcommand{\Q}{\mathbb Q}
\newcommand{\supp}{\operatorname{supp}}
\newcommand{\Dio}{\mathrm D}

\newcommand{\Cent}{\operatorname{C}}

\title{The Diophantine Problem in Wreath Products of Groups}
\author{Michael Catli\footnote{Stevens Institute of Technology},
Alexei Myasnikov\footnote{Stevens Institute of Technology}}

\begin{document}
\maketitle
\tableofcontents

\section{Introduction}
\label{se:introduction}

We study the Diophantine problem in restricted wreath products of groups.
The main results, summarized in Theorem \ref{th:general}, show that in a wide class of wreath products of groups, the Diophantine problem is undecidable.

Throughout, we write $G=B\wr H$, where $B$ is the passive factor, and $H$
is the active factor. 

We use two methods. The first is the retract principle: the decidability of
systems of equations with coefficients in a retract of a group is unchanged when solutions are
allowed in the ambient group.  We show that if both factors are
indicable, their wreath product has a retract isomorphic to
$\Z\wr\Z$. The Diophantine problem in this latter group is undecidable
by Myasnikov and Romanovskii~\cite{MR}; see also the explanation of
this result in~\cite[Section~1]{KM2025}. This gives undecidability for
wreath products of finitely generated indicable groups.

The second method combines e-definability with retractions.
Kharlampovich and Miasnikov~\cite{KM2025} proved undecidability for
right-iterated wreath products of nontrivial free abelian groups of
finite rank. Here we generalize this result considerably and prove that $\Dio(B\wr H)$ is undecidable whenever
$B$ is an infinite finitely generated abelian group and $H$ is a
finitely generated group containing an infinite e-definable finitely
generated abelian subgroup $A$. No torsion-free hypothesis on $B$,
$A$ or $H$ is required. If $\pi:B\wr H\to H$ is the canonical
retraction, then $\pi^{-1}(A)$ is e-definable and retracts onto
$B\wr A$. Since $B$ and $A$ are indicable, the first method gives
undecidability in $B\wr A$, which transfers to $B\wr H$.  A similar argument shows that for any groups $B$ and $H$, the Diophantine problems in $B$ and in $H$ are polynomial time reducible to the Diophantine problem in $B \wr H$.  In particular, if the Diophantine problem either in $B$ or in $H$ is undecidable, then the Diophantine problem in  $B \wr H$ is undecidable.

We also describe lower central quotients of wreath products with free
abelian factors and give a nilpotent quotient criterion for the undecidability of the Diophantine problem in such groups. Its undecidability comes from two results. Garreta, Miasnikov and
Ovchinnikov~\cite{GMO} showed that rings of integers of number fields
are e-interpretable in several classes of groups, including finitely
generated non-virtually abelian nilpotent groups. Koymans and
Pagano~\cite[Theorem~1.2]{KP} proved that $\Z$ is Diophantine in the
ring of integers of every number field. Consequently, these rings
have an undecidable Diophantine problem. Now, the transitivity of e-interpretability implies the undecidability of the Diophantine problem in finitely
generated non-virtually abelian nilpotent groups. The explicit lower central quotients give an
alternative proof of undecidability when both factors are nontrivial
free abelian groups of finite rank. We also obtain examples involving
centralizers, centers, and semidirect products.

In fact, there are not many examples of finitely generated infinite wreath products of two nontrivial
groups with a decidable Diophantine problem. The simplest examples arise
when the acting factor is finite. The result by Ciobanu, Holt, and Rees from ~\cite[Lemma~3.6]{CiobanuHoltRees2020} supplies a
general construction. 
Let $B$ be a finitely generated group with a decidable Diophantine problem,
and let $H$ be a finite group. Then the Diophantine problem in $W=B\wr H$
is decidable. 

We are not aware of any wreath products of finitely generated non-trivial groups, where the active group is infinite, with a decidable Diophantine problem.

It seems that the most interesting groups related to wreath products where the decidability of the Diophantine problem is unknown are the lamplighter group $\Z_2 \wr \Z$, the first Grigorchuk group $\Gamma$, and the wreath product $\Z_2 \wr \Gamma$. 

Although the results in this paper are only concerned with general systems of equations, we remark that in groups for which general systems of equations are undecidable, it is interesting to consider the decidability of special types of equations. For example, quadratic equations in the first Grigorchuk group $\Gamma$ are decidable \cite{LMU1}. We mention more results of this kind: it was proven in \cite{UW1} that one-variable equations in the Lamplighter group $L_2=\Z_2 \wr \Z$ are decidable (although the Diophantine problem remains open), and in the same group quadratic equations are decidable \cite{UW2}. Ushakov and Weiers also proved that solving orientable quadratic equations is decidable in wreath products of finitely generated abelian groups \cite{UW3} - despite the general Diophantine problem being undecidable \cite{KM2025}. Furthermore, the Diophantine problem for orientable quadratic equations in free metabelian groups was shown to be NP-complete \cite{LU1}. For results on the decidability of quadratic equations in metabelian Baumslag-Solitar groups, refer to \cite{MU2}. We also acknowledge several results concerning other classes of equations: Ushakov proved that solving spherical equations in the class of finite metabelian groups $G_{p, n}=\Z_p^n\rtimes \Z_p^*$ with $n\in \mathbb{N}$ and $p$ prime is computationally hard on average, assuming some lattice approximation problem is hard in the worst case \cite{U2}. For more results on spherical equations and their variants see \cite{LU2} and \cite{UMW1}. Solving systems of algebraic equations with exponents has also been proven to be NP-complete in \cite{MU1} where coefficients are elements of an algebraic closure of $\Q$ not equal to $0$ or $1$, and solutions are integers.

\section{Diophantine problems}
\label{se:Diophantine}

\subsection{The Diophantine problem}

The \emph{Diophantine problem} in a fixed algebraic structure
$\mathcal M$, denoted $\Dio(\mathcal M)$, asks whether there is an
algorithm that decides, for every finite system of equations with
coefficients in $\mathcal M$, whether that system has a solution in
$\mathcal M$. In a group, equations can be written as $w(X)=1$, where
$w$ is a word in the variables, their inverses, and the coefficients. 

Recall that for algebraic structures  $\mathcal A$ and $\mathcal M$  the Diophantine problem $\Dio(\mathcal A)$ m-reduces to the Diophantine problem $\Dio(\mathcal M)$, symbolically $\Dio(\mathcal A) \leq_m \Dio(\mathcal M)$, if there is a computable map $f$, which for every finite system $S$ over $\mathcal A$ 
computes a finite system $S^*$ over
$\mathcal M$ such that the system $S(\bar x)$ has a solution in  $\mathcal A$ if and only if  $S^*(\bar y)$ has a solution in $\mathcal M$. If the reduction $f$ is computable in polynomial time we write $\Dio(\mathcal A) \leq_P \Dio(\mathcal M)$.

For algorithmic and complexity statements, we use fixed finitely
generated structures in finite languages with fix finite generating
sets. Coefficients are specified by terms in these generators; for
groups, they are specified by words. Input length includes the
lengths of these terms or words.  An $m$ reduction is understood with respect
to this encoding. Purely algebraic statements about arbitrary
groups do not require finite generation; whenever a complexity
conclusion is drawn, the finite generation and encoding conventions
just stated apply. All equations in this paper may have coefficients.

\subsection{Interpretability by equations}

We recall e-interpretability and the facts about it needed below;
see~\cite{GMO}.

\begin{definition}
A subset $D\subseteq M^m$ is \emph{Diophantine}, or
\emph{e-definable}, in $\mathcal M$ if there is a finite system of
equations $\Sigma(\bar x,\bar y)$, with coefficients in $\mathcal M$,
such that
\[
D=\{\bar a\in M^m:\mathcal M\models
\exists\bar y\,\Sigma(\bar a,\bar y)\}.
\]
An existentially quantified conjunction of equations is called a
\emph{Diophantine formula}. These formulas are positive primitive
formulas; in purely functional languages, the two notions coincide.
\end{definition}

\begin{definition}
An algebraic structure $\mathcal A$ is \emph{e-interpretable} in
$\mathcal M$ if there are an e-definable set $D\subseteq M^n$ and a
surjection $\phi:D\to A$, called the \emph{coordinate map}, such
that the following sets are e-definable in $\mathcal M$:
\begin{enumerate}
\item the preimage of equality on $A$;
\item the preimage of the graph of every basic operation of
      $\mathcal A$;
\item the preimage of every basic relation of $\mathcal A$;
\item the preimage of every named constant of $\mathcal A$.
\end{enumerate}
Here preimages of relations and operation graphs are taken under
the appropriate coordinatewise powers of $\phi$.
\end{definition}

The composite of two e-interpretations is again an
e-interpretation: replace each equation in the outer interpreting
formulas by its interpretation in the inner structure and quantify
all the resulting witnesses. We will mostly use the following particular case of e-interpretability.

\begin{lemma}\label{eq:quotientInterp}
Let $N \leq K \leq G$ be  e-definable subgroups of a group $G$ and $N$ is normal in $K$. Then $G/N$ is e-interpretable in $G$.
\end{lemma}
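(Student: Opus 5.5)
The intended statement must concern the quotient $K/N$: since $N$ is only assumed normal in $K$, the quotient $G/N$ is not a group in general. In the case $K=G$ with $N\trianglelefteq G$ we get exactly $G/N$. I would therefore prove that $K/N$ is e-interpretable in $G$, by exhibiting the interpretation explicitly. Take $n=1$, the domain $D=K\subseteq G$ (e-definable by hypothesis), and the coordinate map $\phi:K\to K/N$, $\phi(x)=xN$, which is surjective. Fix systems $\Sigma_K(x,\bar u)$ and $\Sigma_N(x,\bar v)$ defining $K$ and $N$ respectively.

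I would then check the items of the definition of e-interpretability one at a time. In each case I write the condition as a conjunction of equations with fresh existentially quantified variables. The conjunction of two Diophantine formulas (after renaming witnesses apart) is Diophantine, and substituting a term for a variable is handled by introducing a new variable $z$ together with the equation $z=t$.

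\emph{Equality.} $\phi(x)=\phi(y)$ iff $x,y\in K$ and $x^{-1}y\in N$. This is expressed by
\[
\exists z,\bar u,\bar u',\bar v\;\bigl(\Sigma_K(x,\bar u)\wedge\Sigma_K(y,\bar u')\wedge z=x^{-1}y\wedge\Sigma_N(z,\bar v)\bigr).
\]

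\emph{Multiplication.} The graph is $\{(x,y,w)\in K^3:(xy)^{-1}w\in N\}$, which is defined in the same way.

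\emph{Inversion and identity.} Inversion is $\{(x,y)\in K^2: xy\in N\}$, and the identity is $\phi^{-1}(1)=N$. Both are defined by the same pattern.

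\emph{Constants.} Each named constant $cN$ with $c\in K$ has preimage $\{x\in K: c^{-1}x\in N\}$. This uses $c$ as a coefficient, which is allowed since equations may have coefficients from $G$.

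The only point needing care is that these sets really are the preimages of the graphs of the operations of $K/N$. This is where normality of $N$ in $K$ enters. The product $xN\cdot yN=xyN$ is well defined, and $\phi(x)\phi(y)=\phi(w)$ iff $xyN=wN$ iff $(xy)^{-1}w\in N$. The analogous equivalences hold for inversion and the identity. Once this is verified, every condition in the definition is met, and there is no real obstacle beyond this bookkeeping.

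For the complexity remarks later, I would also note that the interpreting formulas are obtained from $\Sigma_K$ and $\Sigma_N$ by a fixed finite construction. Consequently, translating a system over $K/N$ into one over $G$ is polynomial time.
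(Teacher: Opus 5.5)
Your proposal is correct and follows essentially the same route as the paper: domain $D=K$, coordinate map $x\mapsto xN$, and equality, multiplication and identity expressed through membership in the e-definable subgroup $N$ (the paper writes $z^{-1}xy\in N$, which is equivalent to your $(xy)^{-1}w\in N$). Your reading of the statement is also right: the paper's own proof concludes that $K/N$ is e-interpretable in $G$, and $G/N$ is literally meaningful only when $K=G$. Your explicit $\Sigma_K$ constraints and your treatment of inversion and constants make the bookkeeping more complete than the paper's.
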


\begin{proof}
Use the e-definable subset $K \subseteq G$ as the domain $D$ in the definition of e-interpretability above. Put  $\phi(x)=xN$ as the coordinate map $K \to K/N$. Then
$$
\phi(x) = \phi(y) \Longleftrightarrow x^{-1}y \in N,
$$
$$
xN \cdot yN = zN \Longleftrightarrow  z^{-1}xy \in N,
$$
$$
\phi(x) = N \Longleftrightarrow x \in N.
$$
Since $N$ is e-definable in $G$, this gives an e-interpretation of $K/N$ in $G$.
\end{proof}
The next result, established in \cite{GMO}, is central to our considerations.
\begin{lemma}\cite{GMO} \label{lem:interpretation-reduction}
An e-interpretation of $\mathcal A$ in $\mathcal M$ induces a
polynomial-time reduction
\[
\Dio(\mathcal A)\leq_P\Dio(\mathcal M).
\]
More precisely, a finite system $S(\bar x)$ over $\mathcal A$ can
be effectively transformed into a finite system $S^*(\bar y,\bar z)$ over
$\mathcal M$ such that the coordinate map $\phi: D \to \mathcal A$ sends every solution of $S^*(\bar y,\bar z)$  in $\mathcal M$ into a solution of $S(\bar x)$  in $\mathcal A$, and every solution of $S(\bar x)$  in $\mathcal A$ can be obtained this way. 
\end{lemma}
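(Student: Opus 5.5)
The idea is to translate a system over $\mathcal A$ symbol by symbol through the interpreting formulas, and then check that the size of the resulting system over $\mathcal M$ grows only linearly. Fix the e-interpretation once and for all. This gives a Diophantine formula $\Delta(\bar y)$ defining the domain $D\subseteq M^n$, with $\bar y$ an $n$-tuple. It also gives a Diophantine formula $E(\bar y_1,\bar y_2)$ for the preimage of equality, and, for each basic operation $f$ of arity $k$, a Diophantine formula $\Gamma_f(\bar y_1,\dots,\bar y_k,\bar y_{k+1})$ for the preimage of its graph. Likewise there are formulas for the preimages of the relations and of the named constants. Since the language of $\mathcal A$ is finite, these are finitely many fixed formulas, each with a fixed finite list of existentially quantified witness variables.

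Let $S(\bar x)$ be a finite system over $\mathcal A$. First I would flatten it. Introduce a fresh variable for every subterm occurring in $S$, including the coefficients, which are terms in the named generators and constants. Every equation $t_1=t_2$ then becomes a conjunction of atomic statements of three shapes: $u=f(v_1,\dots,v_k)$, or $u=c$ for a named constant $c$, or $u=v$. This yields an equivalent system $S'$ over $\mathcal A$: solutions of $S$ extend uniquely to solutions of $S'$ and restrict back. The size of $S'$ is linear in the length of $S$. Next, replace each variable $u$ of $S'$ by an $n$-tuple $\bar y_u$ of variables of $\mathcal M$ and impose $\Delta(\bar y_u)$. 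Replace each atomic statement by the corresponding interpreting formula applied to these tuples, using fresh copies $\bar z$ of the witness variables for every occurrence. Because each formula is a conjunction of equations, dropping the existential quantifiers yields a finite system $S^*(\bar y,\bar z)$ over $\mathcal M$. Its length is at most a constant, depending only on the fixed interpretation, times $|S'|$. The construction is plainly polynomial-time.

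For correctness in one direction, suppose $(\bar b,\bar c)$ solves $S^*$. Each $\bar b_u$ lies in $D$ because $\Delta$ holds, and each atomic formula holds by the e-definability clauses. So $\phi(\bar b_u)=f(\phi(\bar b_{v_1}),\dots)$, $\phi(\bar b_u)=c$, or $\phi(\bar b_u)=\phi(\bar b_v)$ as appropriate. Hence the assignment $u\mapsto\phi(\bar b_u)$ solves $S'$, and restricting to $\bar x$ solves $S$. For the converse, take a solution $\bar a$ of $S$ and extend it to a solution of $S'$. For each variable $u$, use surjectivity of $\phi$ to choose $\bar b_u\in D$ with $\phi(\bar b_u)$ equal to the value of $u$; for the variables $\bar x$, any prescribed preimages may be chosen. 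Every atomic statement of $S'$ is true in $\mathcal A$, so its interpreting formula holds of the chosen tuples, and witnesses $\bar z$ exist. This shows every solution of $S$ arises as the $\phi$-image of a solution of $S^*$.

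The main point needing care is the flattening. Equality in $\mathcal A$ corresponds only to the e-definable relation $E$, not to equality of tuples in $\mathcal M$. So one cannot substitute term representatives directly; each equation must instead be routed through $E$, and each operation application through $\Gamma_f$. Keeping a separate fresh witness block for each occurrence prevents unintended identifications. It also keeps the blow-up linear rather than exponential, which would not be the case if nested terms were expanded naively.
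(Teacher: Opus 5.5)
Your proof is correct and is essentially the standard argument behind this result of Garreta, Miasnikov and Ovchinnikov, which the paper quotes without proof: unnest the system into atomic equations of the forms $u=f(\bar v)$, $u=c$, $u=v$, then replace each one by its interpreting Diophantine formula with fresh witnesses, which gives a linear blow-up. One small detail is worth adding: if the fixed generators of $\mathcal A$ used to write coefficients are not named constants, the preimage of each generator $g$ is still e-definable as $\{\bar y: E(\bar y,\bar d_g)\}$, where $\bar d_g\in D$ is a fixed representative used as parameters, so your treatment of coefficients goes through unchanged.
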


\begin{corollary}\label{karp}
If $\mathcal A$ is e-interpretable in $\mathcal M$ and
$\Dio(\mathcal A)$ is undecidable, then $\Dio(\mathcal M)$ is
undecidable.
\end{corollary}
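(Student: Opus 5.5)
The corollary follows from Lemma~\ref{lem:interpretation-reduction} by contraposition. Suppose $\mathcal A$ is e-interpretable in $\mathcal M$ and, for contradiction, that $\Dio(\mathcal M)$ is decidable. Fix the data of the e-interpretation: the domain $D\subseteq M^n$, the coordinate map $\phi:D\to A$, and the finitely many Diophantine formulas defining $D$, the preimage of equality, the preimages of the graphs of the basic operations, the preimages of the basic relations, and the preimages of the named constants. These are finitely many finite systems with fixed coefficients in $\mathcal M$. Since the languages are finite, they can be hard-coded into an algorithm.

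Given a finite system $S(\bar x)$ over $\mathcal A$, apply the effective transformation of Lemma~\ref{lem:interpretation-reduction} to obtain the finite system $S^*(\bar y,\bar z)$ over $\mathcal M$. By the lemma, $\phi$ sends every solution of $S^*$ to a solution of $S$. So if $S^*$ is solvable in $\mathcal M$, then $S$ is solvable in $\mathcal A$. Conversely, every solution of $S$ arises this way, so if $S$ is solvable then $S^*$ is solvable. Thus $S$ has a solution in $\mathcal A$ if and only if $S^*$ has a solution in $\mathcal M$, and the map $S\mapsto S^*$ is a computable (indeed polynomial-time) m-reduction $\Dio(\mathcal A)\leq_m\Dio(\mathcal M)$.

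Composing this reduction with the assumed decision procedure for $\Dio(\mathcal M)$ decides $\Dio(\mathcal A)$, which contradicts the hypothesis. Hence $\Dio(\mathcal M)$ is undecidable.

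The only point needing care is that the reduction is genuinely computable under the paper's encoding conventions. The coefficients of $S^*$ come from the fixed interpreting formulas, together with the interpretations of the coefficients of $S$. Those coefficients are terms in the fixed generators of $\mathcal A$, so each can be rewritten, using the operation-graph and constant formulas, into Diophantine conditions in $\mathcal M$. Lemma~\ref{lem:interpretation-reduction} already guarantees this, so no real obstacle remains; the proof is essentially the standard observation that m-reductions transfer undecidability upward.
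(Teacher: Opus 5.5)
Your proposal is correct and matches the paper's intended argument. The paper states this corollary without a separate proof, as an immediate consequence of Lemma~\ref{lem:interpretation-reduction}: composing the computable reduction $S\mapsto S^*$ with a hypothetical decision procedure for $\Dio(\mathcal M)$ would decide $\Dio(\mathcal A)$, which is exactly your contrapositive.
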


\subsubsection{Finite verbal width}
\label{sse:Verbal}

For a group word $w=w(x_1,\ldots,x_m)$, let
\[
w(G)=\{w(g_1,\ldots,g_m):g_1,\ldots,g_m\in G\}.
\]
The \emph{verbal subgroup} associated with $w$ is
$\langle w(G)\rangle$. The word has \emph{finite width} in $G$
if every element of this subgroup is a product of at most $n$
elements of $w(G)^{\pm1}$ for some fixed $n$. In that case
$\langle w(G)\rangle$ is e-definable by
\begin{equation}\label{eq:verbal1}
\exists (y_{ij})\exists(z_{ij})\quad
x=\prod_{i=1}^{n}
\bigl(w(y_{i1},\ldots,y_{im})
w(z_{i1},\ldots,z_{im})^{-1}\bigr).
\end{equation}
Indeed, each positive or negative word value can occupy one factor
in this product by setting the other tuple equal to $1$, and unused
factors can also be made trivial. Conversely, every such product
belongs to the verbal subgroup.

A group is \emph{verbally elliptic} if every word has finite width
in it. Finitely generated nilpotent, metabelian and polycyclic
groups are verbally elliptic; see~\cite[Section~2.3]{GMO} and the
references there, including~\cite{Stroud} for the nilpotent case.
We use the lower central series convention
\[
\gamma_1(G)=G,\qquad
\gamma_{i+1}(G)=[\gamma_i(G),G].
\]
The subgroup $\gamma_i(G)$ is verbal for the left-normed
commutator word $[x_1,\ldots,x_i]$ (with $w_1(x_1)=x_1$).
Consequently all these subgroups are e-definable in the three
classes just mentioned, and their quotients are e-interpretable by
Lemma~\ref{eq:quotientInterp}.

\subsubsection{The nilpotent quotient criterion}
\label{se:Mainresults}

The following general result  will be handy in our arguments below.
\begin{theorem}\label{thm:nilpotent-criterion}
Let $G$ be finitely generated. Suppose that, for some $i\geq3$,
the subgroup $\gamma_i(G)$ is e-definable in $G$ and
$G/\gamma_i(G)$ is not virtually abelian. Then $\Dio(G)$ is
undecidable.
\end{theorem}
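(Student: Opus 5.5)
The plan is to pass to the finitely generated nilpotent quotient $Q=G/\gamma_i(G)$, get undecidability there from the results of \cite{GMO} and \cite{KP} recalled in the introduction, and then pull it back to $G$ by transitivity of e-interpretations and Corollary~\ref{karp}.

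First we e-interpret $Q$ in $G$. Since $\gamma_i(G)$ is normal in $G$ and e-definable by hypothesis, and $G$ is trivially e-definable (by the empty system, or by $x=x$), we apply Lemma~\ref{eq:quotientInterp} with $N=\gamma_i(G)$ and $K=G$. This shows that $Q$ is e-interpretable in $G$. The group $Q$ is finitely generated, because $G$ is. It is nilpotent of class at most $i-1$. By hypothesis it is not virtually abelian.

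Next we use the theorem of Garreta, Miasnikov and Ovchinnikov \cite{GMO}, as recalled in the introduction. Every finitely generated nilpotent group that is not virtually abelian e-interprets the ring of integers $\mathcal O$ of some number field. By Koymans and Pagano \cite[Theorem~1.2]{KP}, $\Z$ is Diophantine in $\mathcal O$, so $\Z$ is e-interpretable in $\mathcal O$. Since $\Dio(\Z)$ is undecidable by Matiyasevich's theorem, Corollary~\ref{karp} gives that $\Dio(\mathcal O)$ is undecidable.

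We then compose the e-interpretations $\mathcal O\rightsquigarrow Q\rightsquigarrow G$; their composite is again an e-interpretation, as noted after the definition. Corollary~\ref{karp} then yields that $\Dio(G)$ is undecidable.

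The main obstacle is the middle step, invoking \cite{GMO} in the generality we need: a finitely generated nilpotent group that is not virtually abelian, possibly with torsion. If the cited statement is only available in a restricted form, I would first reduce to a cleaner case. One option is to pass to a torsion-free finite-index subgroup of $Q$; it is e-definable as a union of cosets of a verbal subgroup $Q^m$, which has finite width since $Q$ is verbally elliptic. Another option is to locate non-commuting elements $a,b$ with $[a,b]$ of infinite order, which must exist when $Q$ is not virtually abelian. One then interprets $\Z$-multiplication via centralizers, which are e-definable as $\Cent(g)=\{x:[x,g]=1\}$, in the style of \cite{GMO}. The hypothesis $i\ge3$ is simply what allows $Q$ to be non-abelian at all.
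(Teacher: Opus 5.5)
Your proposal is correct and follows the paper's proof essentially step for step. You e-interpret $G/\gamma_i(G)$ in $G$ via Lemma~\ref{eq:quotientInterp}, get an e-interpretable ring of integers $\mathcal O_K$ from \cite[Theorem~4.13]{GMO}, make $\Z$ Diophantine in it by \cite[Theorem~1.2]{KP}, and compose with the undecidability of Hilbert's tenth problem. The fallback in your last paragraph is unnecessary, since the paper applies \cite[Theorem~4.13]{GMO} directly to all finitely generated non-virtually abelian nilpotent groups. If you did use it, note that a finite union of cosets of $Q^m$ need not be e-definable (for example, $\{(a,b)\in\Z^2 : a\equiv b \bmod 2\}$ is not e-definable in $\Z^2$), so you should take a torsion-free verbal subgroup $Q^m$ itself.
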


\begin{proof}
The quotient is a finitely generated nilpotent group and is
e-interpretable in $G$. By~\cite[Theorem~4.13]{GMO}, there is a
ring of integers $\mathcal O_K$ of a number field $K$ that is
e-interpretable in $G$. By~\cite[Theorem~1.2]{KP}, $\Z$ is
Diophantine in $\mathcal O_K$. Its ordinary addition and
multiplication are restrictions of the ring operations, so this
also gives an e-interpretation of the integer ring.

The e-interpretation of the integer ring $\Z$ in $\mathcal O_K$,
together with the e-interpretation of $\mathcal O_K$ in $G$, gives
the polynomial-time reductions
\[
\Dio(\Z; +,\cdot)
\leq_P
\Dio(\mathcal O_K)
\leq_P
\Dio(G).
\]
Indeed, each reduction follows from the reduction lemma for
e-interpretations (Lemma~\ref{lem:interpretation-reduction}).
Since Hilbert's tenth problem over $\Z$ is undecidable, it follows
that $\Dio(G)$ is undecidable.
\end{proof}

\begin{corollary}\label{t}
Let $G$ be a finitely generated metabelian group. If
$G/\gamma_i(G)$ is not virtually abelian for some $i\geq3$, then
$\Dio(G)$ is undecidable. The same conclusion holds for finitely
generated nilpotent or polycyclic groups satisfying this quotient
condition.
\end{corollary}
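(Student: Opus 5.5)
The corollary will follow from Theorem~\ref{thm:nilpotent-criterion}; the only thing to check is the hypothesis that $\gamma_i(G)$ is e-definable in $G$. The other hypotheses, finite generation of $G$ and $G/\gamma_i(G)$ not virtually abelian for some $i\geq 3$, are assumed in the statement.

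\textbf{Step 1 (verbal description).} By the convention of Subsection~\ref{sse:Verbal}, $\gamma_i(G)$ is the verbal subgroup $\langle w(G)\rangle$ for the left-normed commutator word $w=[x_1,\ldots,x_i]$. I will record this with a short induction on $i$. The inductive step uses that $[\gamma_{i-1}(G),G]$ is generated by commutators $[u,g]$ with $u\in\gamma_{i-1}(G)$. One then shows that it is already generated by the values $[x_1,\ldots,x_i]$. The standard commutator identities $[ab,g]=[a,g]^b[b,g]$ and the normality of the subgroup generated by a set of word values let one pass from products $u$ of commutator values to single values.

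\textbf{Step 2 (finite width).} Finitely generated metabelian, nilpotent and polycyclic groups are verbally elliptic; this is the fact cited in Subsection~\ref{sse:Verbal} from \cite[Section~2.3]{GMO} and \cite{Stroud}. Hence $w$ has finite width $n$ in $G$. Formula~\eqref{eq:verbal1} then gives a single Diophantine formula, with no coefficients needed, that defines exactly $\gamma_i(G)$. So $\gamma_i(G)$ is e-definable in $G$.

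\textbf{Step 3 (conclusion).} All hypotheses of Theorem~\ref{thm:nilpotent-criterion} now hold, so $\Dio(G)$ is undecidable. The argument is the same in the metabelian, nilpotent and polycyclic cases; only the source of verbal ellipticity differs.

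The main potential obstacle is Step~2, since verbal ellipticity is a deep external result. For nilpotent groups it is classical. For metabelian and polycyclic groups it relies on the cited literature, in particular on the theorem that finitely generated soluble groups satisfying max (polycyclic groups) and finitely generated metabelian groups are verbally elliptic. I will simply invoke these references as the paper already does, rather than reprove them. For nilpotent $G$ one can avoid even that: if $G$ has class $c<i$ then $\gamma_i(G)=1$ is trivially e-definable. In general the width bound comes from Stroud's result.
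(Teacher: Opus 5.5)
Your proposal is correct and is essentially the paper's proof: verbal ellipticity gives finite width of $[x_1,\ldots,x_i]$, formula~\eqref{eq:verbal1} then makes $\gamma_i(G)$ e-definable, and Theorem~\ref{thm:nilpotent-criterion} applies. (In the nilpotent case you can always avoid Stroud's theorem. If $G/\gamma_i(G)$ is not virtually abelian then neither is $G$, so you may replace $i$ by any index exceeding both $2$ and the class, where $\gamma_i(G)=1$.)
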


\begin{proof}
Finite verbal width makes $\gamma_i(G)$ e-definable, so
Theorem~\ref{thm:nilpotent-criterion} applies.
\end{proof}

\subsection{Retracts and the Diophantine problem}

\begin{definition}
A subgroup $H\leq G$ is a \emph{retract} of $G$ if there is a
homomorphism $\rho:G\to H$ whose restriction to $H$ is the
identity. Such a homomorphism is called a \emph{retraction}.
\end{definition}

A retraction gives $G=\ker\rho\rtimes H$: for $g\in G$,
$g=(g\rho(g)^{-1})\rho(g)$ and $\ker\rho\cap H=1$.
Conversely, the projection onto the complement of any semidirect
product is a retraction.

\begin{proposition}[Retract principle]\label{prop:retract-principle}
Let $H$ be a retract of a finitely generated group $G$. Then
\[
\Dio(H)\leq_P\Dio(G).
\]
The reduction regards a system with coefficients in $H$ as a system
with coefficients in $G$.
\end{proposition}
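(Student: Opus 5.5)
The reduction is the identity map on systems: a finite system $S(\bar x)$ with coefficients in $H$ is read verbatim as a system over $G$. For this to be a valid reduction I must show that $S$ has a solution in $H$ if and only if it has a solution in $G$. For the complexity claim, I fix finite generating sets. Since $G$ is finitely generated and $\rho:G\to H$ is surjective, $H$ is generated by the images $\rho(g_1),\dots,\rho(g_k)$ of the generators of $G$. Each generator $h_j$ of a fixed finite generating set of $H$ is a fixed word in the $g_i$ (or we may just choose the generators of $G$ to include those of $H$). Rewriting coefficients then only multiplies lengths by a constant, so the translation is linear time.

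\emph{Forward direction.} A solution $\bar a\in H^n$ of $S$ in $H$ is also a solution in $G$, because $H\leq G$ and the words evaluate identically.

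\emph{Backward direction.} This is the key step. Let $\bar g\in G^n$ satisfy $w_j(\bar g,\bar c)=1$ for every equation of $S$, where the coefficients $\bar c$ lie in $H$. Apply the retraction $\rho$. Since $\rho$ is a homomorphism,
\[
1=\rho\bigl(w_j(\bar g,\bar c)\bigr)=w_j\bigl(\rho(\bar g),\rho(\bar c)\bigr)=w_j\bigl(\rho(\bar g),\bar c\bigr),
\]
using $\rho|_H=\mathrm{id}$ on the coefficients. Hence $\rho(\bar g)\in H^n$ is a solution of $S$ in $H$.

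The only point needing care is the encoding bookkeeping: words over generators of $H$ must be expressed as words over generators of $G$ with linear blow-up. This is immediate once a fixed finite dictionary is chosen, so I expect no real obstacle. The algebraic content is just the homomorphism property of $\rho$ together with the fact that it fixes the coefficients.
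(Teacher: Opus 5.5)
Your proposal is correct and matches the paper's proof essentially step for step. Both argue that solutions in $H$ are solutions in $G$, pull solutions in $G$ back via the retraction $\rho$ (which fixes the coefficients), and obtain the linear-time encoding by substituting fixed words in the generators of $G$ for the generators of $H$.
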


\begin{proof}
Let $\rho:G\to H$ be a retraction and let $\Sigma(X)=1$ have
coefficients in $H$. Every solution in $H$ is a solution in $G$.
Conversely, applying $\rho$ to a solution in $G$ gives a solution
in $H$, because homomorphisms commute with word evaluation and
$\rho$ fixes the coefficients.

The group $H$ is finitely generated, since it is an image of $G$.
After fixing generating sets, represent each generator of $H$ by
a fixed word in the generators of $G$. Substitution of these words
in the coefficient words of $\Sigma$ is a linear-time operation.
It produces a system solvable in $G$ exactly when $\Sigma$ is
solvable in $H$.
\end{proof}

\begin{corollary}[Undecidability ascends]
Let $G$ be finitely generated and $H$ a retract of $G$. If
$\Dio(H)$ is undecidable, then $\Dio(G)$ is undecidable.
\end{corollary}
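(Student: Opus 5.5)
The statement is an immediate consequence of the retract principle (Proposition~\ref{prop:retract-principle}) combined with the definition of an $m$-reduction. Throughout, we fix a retraction $\rho:G\to H$ and finite generating sets for $G$ and $H$; $H$ is finitely generated because it is the image of $G$ under $\rho$.

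The argument is by contraposition. Suppose $\Dio(G)$ were decidable, witnessed by an algorithm $\mathcal A_G$. Given any finite system $\Sigma(X)=1$ over $H$, we first rewrite each coefficient. Every generator of $H$ is replaced by a fixed word in the generators of $G$ representing it, which yields a system $\Sigma^*$ over $G$. By Proposition~\ref{prop:retract-principle}, $\Sigma$ is solvable in $H$ exactly when $\Sigma^*$ is solvable in $G$. The reason is that $H\le G$ gives one direction, and for the other we apply $\rho$, which fixes the coefficients and commutes with word evaluation. We then run $\mathcal A_G$ on $\Sigma^*$ and return its answer. Since $\Sigma\mapsto\Sigma^*$ is computable (in fact in linear time), this procedure decides $\Dio(H)$, contradicting the hypothesis.

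The only point needing care is the encoding. The reduction must be computable with respect to the fixed generating sets, and this is exactly where finite generation of $H$ and the fixed choice of representing words are used. Both are supplied by the proof of Proposition~\ref{prop:retract-principle}, so no step presents a real obstacle.
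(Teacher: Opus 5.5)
Your proof is correct and follows the paper's route. The corollary is stated as an immediate consequence of the reduction $\Dio(H)\leq_P\Dio(G)$ in Proposition~\ref{prop:retract-principle}. Your contrapositive argument simply unpacks that reduction: substitute fixed words for the coefficients, and apply $\rho$ to solutions in $G$.
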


\begin{corollary}[Decidability descends]
Let $G$ be finitely generated and $H$ a retract of $G$. If
$\Dio(G)$ is decidable, then $\Dio(H)$ is decidable.
\end{corollary}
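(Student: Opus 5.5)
The statement is the corollary ``Decidability descends'': if $G$ is finitely generated, $H$ is a retract of $G$, and $\Dio(G)$ is decidable, then $\Dio(H)$ is decidable. The plan is to read it off directly from the retract principle (Proposition~\ref{prop:retract-principle}), which gives a computable, indeed polynomial-time, reduction $\Dio(H)\leq_P\Dio(G)$, and to compose that reduction with the assumed decision procedure for $\Dio(G)$.

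Concretely, fix a retraction $\rho:G\to H$. First I note that $H$ is finitely generated, being the image $\rho(G)$ of a finitely generated group, so the encoding conventions stated earlier apply to $H$: fix a finite generating set of $H$ and express each generator by a fixed word in the generators of $G$. Given an input system $\Sigma(X)=1$ over $H$, the algorithm substitutes these words into the coefficients, obtaining a system $\Sigma^*$ over $G$. By Proposition~\ref{prop:retract-principle}, $\Sigma^*$ is solvable in $G$ if and only if $\Sigma$ is solvable in $H$. The forward direction holds because $H\le G$. For the reverse direction, apply $\rho$ to a solution in $G$; since $\rho$ fixes the coefficients, this yields a solution in $H$. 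Running the decision algorithm for $\Dio(G)$ on $\Sigma^*$ then decides $\Sigma$, and the composite procedure is an algorithm for $\Dio(H)$.

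There is no real obstacle. The only point needing care is that the translation of coefficient words from the generators of $H$ to those of $G$ is computable; this holds because it is a fixed finite substitution table, which is exactly what Proposition~\ref{prop:retract-principle} already provides. The result is also the contrapositive of the preceding corollary (``Undecidability ascends''), so one could equally derive it in a single line from that corollary.
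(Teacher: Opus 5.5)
Your proposal is correct and matches the paper's route: the corollary is stated without a separate proof as an immediate consequence of the retract principle (Proposition~\ref{prop:retract-principle}), which is exactly the reduction you compose with the decision procedure for $\Dio(G)$. As you note, it is also just the contrapositive of ``Undecidability ascends.''
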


\begin{corollary}[Semidirect products] \label{co:semidirect}
If $G=N\rtimes H$ is finitely generated, then
$\Dio(H)\leq_P\Dio(G)$. In particular, undecidability of $\Dio(H)$ implies undecidability of $\Dio(G)$.
\end{corollary}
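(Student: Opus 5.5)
The plan is to reduce Corollary~\ref{co:semidirect} directly to the Retract principle (Proposition~\ref{prop:retract-principle}). First I will check that in an (internal) semidirect product $G=N\rtimes H$, with $N\trianglelefteq G$, $G=NH$ and $N\cap H=1$, the complement $H$ is a retract of $G$. Every $g\in G$ has a unique factorization $g=nh$ with $n\in N$, $h\in H$. Define $\rho(g)=h$. To see that $\rho$ is a homomorphism, compute
\[
(n_1h_1)(n_2h_2)=n_1(h_1n_2h_1^{-1})\,h_1h_2 ,
\]
and note that $h_1n_2h_1^{-1}\in N$ because $N$ is normal. Hence $\rho(g_1g_2)=h_1h_2=\rho(g_1)\rho(g_2)$, and plainly $\rho|_H=\mathrm{id}_H$. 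Equivalently, $\rho$ is the composite $G\to G/N\cong H$ of the canonical projection with the inverse of the isomorphism $H\to G/N$. The remark just before Proposition~\ref{prop:retract-principle} already records this converse direction, so this step only needs to be made explicit.

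Second, since $G$ is finitely generated, Proposition~\ref{prop:retract-principle} applies and gives $\Dio(H)\leq_P\Dio(G)$. The reduction sends a system with coefficients in $H$ to the same system read in $G$, after substituting fixed words in the generators of $G$ for the generators of $H$. The proposition also notes that $H$ is finitely generated as a homomorphic image of $G$, so the encoding conventions apply to $H$ as well.

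Third, the undecidability statement follows at once from the corollary ``Undecidability ascends''. Alternatively, if $\Dio(G)$ were decidable, composing the polynomial-time reduction with a decision procedure for $\Dio(G)$ would decide $\Dio(H)$.

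There is essentially no obstacle here. The only point needing care is the bookkeeping: the semidirect product must be internal, or identified with its internal copy, so that $H$ is literally a subgroup of $G$ and coefficients from $H$ make sense in $G$.
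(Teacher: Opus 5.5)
Your proposal is correct and matches the paper's proof. The paper observes that the projection $N\rtimes H\to H$ is a retraction and then applies the retract principle (Proposition~\ref{prop:retract-principle}); you do exactly this, and you also write out the homomorphism check for the projection.
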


\begin{proof}
The projection $N\rtimes H\to H$ is a retraction.
\end{proof}

\section{Indicable groups and retracts}
\label{se:indicable}

\subsection{Indicable groups}

\begin{definition}
A group $G$ is \emph{indicable} if it admits an epimorphism
$G\twoheadrightarrow\Z$. We denote the class of indicable groups
by $\mathcal I$ and its finitely generated members by
$\mathcal I_{\mathrm{fg}}$.
\end{definition}

\begin{proposition}\label{prop:indicable-characterization}
For an arbitrary group $G$, the following are equivalent:
\begin{enumerate}[label=\textup{(\roman*)}]
\item $G$ is indicable;
\item $G$ has an infinite cyclic subgroup that is a retract;
\item $G=K\rtimes\langle t\rangle$ for some normal subgroup $K$,
      where $\langle t\rangle\cong\Z$.
\end{enumerate}
If $G$ is finitely generated, these are also equivalent to
$\operatorname{rank}_{\Z}G_{\mathrm{ab}}>0$, where $G_{\mathrm{ab}}$ is the abelianization of $G$.
\end{proposition}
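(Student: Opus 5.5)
The plan is to prove the cycle (i)$\Rightarrow$(iii)$\Rightarrow$(ii)$\Rightarrow$(i), and then treat the finitely generated case separately.

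For (i)$\Rightarrow$(iii), I take an epimorphism $\varepsilon:G\twoheadrightarrow\Z$ and choose $t\in G$ with $\varepsilon(t)=1$. Since $\Z$ is free, the assignment $1\mapsto t$ defines a homomorphism $s:\Z\to G$ with $\varepsilon\circ s=\mathrm{id}$. So $s$ is injective, and $\langle t\rangle\cong\Z$. Set $K=\ker\varepsilon$, which is normal. Every $g\in G$ factors as $g=(g t^{-\varepsilon(g)})\,t^{\varepsilon(g)}$ with $gt^{-\varepsilon(g)}\in K$. Also $K\cap\langle t\rangle=1$, since $\varepsilon(t^n)=n$. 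Hence $G=K\rtimes\langle t\rangle$.

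For (iii)$\Rightarrow$(ii), the projection $K\rtimes\langle t\rangle\to\langle t\rangle$ is a retraction, by the remark following the definition of retracts. For (ii)$\Rightarrow$(i), a retraction $\rho:G\to C$ onto an infinite cyclic subgroup $C\cong\Z$ is surjective, because it restricts to the identity on $C$. Composing $\rho$ with an isomorphism $C\cong\Z$ gives the required epimorphism.

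For the finitely generated case, every homomorphism $G\to\Z$ factors through $G_{\mathrm{ab}}$, so $G$ is indicable exactly when $G_{\mathrm{ab}}$ surjects onto $\Z$. Since $G_{\mathrm{ab}}$ is a finitely generated abelian group, the structure theorem gives $G_{\mathrm{ab}}\cong\Z^r\oplus T$ with $T$ finite and $r=\operatorname{rank}_{\Z}G_{\mathrm{ab}}$. If $r>0$, projecting onto one $\Z$ summand gives a surjection. Conversely, a surjection $\Z^r\oplus T\to\Z$ kills $T$, since $\Z$ is torsion-free, so it restricts to a nonzero map on $\Z^r$, which forces $r>0$.

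The only mildly delicate point is the splitting in (i)$\Rightarrow$(iii), namely checking that $\langle t\rangle$ is infinite cyclic and meets $K$ trivially. This follows directly from $\varepsilon(t^n)=n$.
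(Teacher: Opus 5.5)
Your proof is correct and follows essentially the paper's argument: choose $t$ with $\varepsilon(t)=1$, split $G=\ker\varepsilon\rtimes\langle t\rangle$, use the projection onto $\langle t\rangle$ as the retraction, and in the finitely generated case factor through $G_{\mathrm{ab}}$ and apply the structure theorem. The only difference is organizational: you run the cycle (i)$\Rightarrow$(iii)$\Rightarrow$(ii)$\Rightarrow$(i), whereas the paper derives (ii) and (iii) directly from (i) and then notes that each gives an epimorphism onto $\Z$.
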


\begin{proof}
Given an epimorphism $\chi:G\to\Z$, choose $t$ with $\chi(t)=1$.
Then $t$ has infinite order, $g\mapsto t^{\chi(g)}$ is a retraction,
and $G=\ker\chi\rtimes\langle t\rangle$. Either a cyclic
retraction or the projection in the displayed semidirect product
gives an epimorphism onto $\Z$. Finally, every homomorphism to
$\Z$ factors through the abelianization. A finitely generated
abelian group maps onto $\Z$ precisely when its free abelian rank
is positive.
\end{proof}

\begin{proposition}
Each of the following groups is indicable:
\begin{enumerate}[label=\textup{(\arabic*)}]
\item Nontrivial free groups.
\item Free abelian groups $\Z^n$, $n\geq1$.
\item Semidirect products $K\rtimes\Z$.
\item HNN extensions; in particular, Baumslag--Solitar groups
      $BS(m,n)=\langle a,t\mid t^{-1}a^m t=a^n\rangle$,
      where $mn\neq0$.
\item Nontrivial finitely generated right-angled Artin groups.
\item Nontrivial finitely generated torsion-free nilpotent groups.
\item Nontrivial finitely generated free solvable groups.
\item Braid groups $B_n$, $n\geq2$.
\item Nontrivial torsion-free one-relator groups.
\item Knot groups and groups of nonempty links in $S^3$.
\item Groups having a finite presentation with positive deficiency.
\item Fundamental groups of finite connected graphs of groups whose
      underlying graphs have positive first Betti number.
\item Artin groups with nonempty standard generating sets.
\item Nontrivial residually free groups and nontrivial limit groups.
\item Thompson's group $F$.
\end{enumerate}
\end{proposition}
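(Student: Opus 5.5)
The plan is to go through the list item by item and, for each class, exhibit an explicit epimorphism onto $\Z$. By Proposition~\ref{prop:indicable-characterization} it is enough to find such a map, or to show that a finitely generated member has abelianization of positive free rank. The cases fall into three groups.

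\textbf{Cases that are essentially immediate.} For (1), (2), (5), (13) and (15) a map onto $\Z$ is read off a generator.
\begin{itemize}
\item In (1), (2) and (5), send one chosen free generator, or one chosen vertex generator of the right-angled Artin group, to $1$ and every other generator to $0$. Every relator is a commutator, so this is well defined.
\item In (13) the Artin relations are $\mathrm{prod}(s,t;m)=\mathrm{prod}(t,s;m)$. Both sides have the same total exponent, so exponent sum over all generators is a surjection onto $\Z$.
\item For Thompson's group $F$ in (15), use its known abelianization $F_{\mathrm{ab}}\cong\Z^2$.
\item For braid groups in (8), exponent sum in the Artin generators is well defined because the braid relations are homogeneous.
\item For knot and link groups in (10), use $H_1(S^3\setminus L)\cong\Z^{\#\text{components}}$, which is nonzero for nonempty $L$. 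Alternatively, Wirtinger generators are all conjugate, so sending each of them to $1$ works.
\end{itemize}

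\textbf{Cases from semidirect and HNN structure.}
\begin{itemize}
\item Case (3) is exactly item (iii) of Proposition~\ref{prop:indicable-characterization}.
\item For (4), an HNN extension $\langle K,t\mid t^{-1}at=\varphi(a)\rangle$ maps onto $\Z$ by sending $K\mapsto0$ and $t\mapsto1$. The defining relations are respected because conjugation does not change exponent in $t$.
\item Case (12) is the same idea. The fundamental group of a graph of groups surjects onto the fundamental group of the underlying graph, which is free of rank equal to the first Betti number. One kills the vertex groups and the edges of a spanning tree, and then applies (1).
\item For (11), suppose the presentation has $n$ generators and $m<n$ relators. The relation matrix of $G_{\mathrm{ab}}$ is $m\times n$, so $\operatorname{rank}G_{\mathrm{ab}}\geq n-m>0$.
\item Case (9) needs more care. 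A nontrivial torsion-free one-relator group with at least two generators has abelianization rank at least $1$ by (11), since its deficiency is at least $1$. A one-generator torsion-free group is $\Z$ itself, because a relator $x^k$ with $k\neq0$ would give torsion or triviality.
\end{itemize}

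\textbf{Structural cases.}
\begin{itemize}
\item For (6), a nontrivial finitely generated torsion-free nilpotent group has $G_{\mathrm{ab}}$ infinite. Otherwise every quotient of the lower central series would be finite, so $G$ would be finite, and torsion-free forces $G$ trivial. I would quote this standard fact.
\item For (7), a free solvable group of rank $r\geq1$ has abelianization $\Z^r$.
\item For (14), a nontrivial residually free group has some nontrivial element $g$ and a homomorphism to a free group in which $g$ has nontrivial image. Composing with abelianization gives a homomorphism to a free abelian group with nonzero image. Projecting suitably onto $\Z$ gives a nonzero map, whose image is $n\Z\cong\Z$, and that is an epimorphism onto a copy of $\Z$. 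Limit groups are residually free, so they are covered as well.
\end{itemize}

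The main obstacle I expect is item (6), together with the one-generator edge case in (9), since both depend on standard but nontrivial structural facts. For these I would cite the literature rather than reprove them.
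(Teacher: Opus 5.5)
Your item-by-item plan follows the paper's proof closely: explicit maps onto $\Z$ or an abelianization of positive rank, with the same handling of HNN extensions, graphs of groups, deficiency, braid and Artin groups, and nilpotent groups. However, the step you give for item (14) fails as written.

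\textbf{The gap in (14).} Composing $\psi\colon G\to F$ with the abelianization $F\to F_{\mathrm{ab}}$ need not give a nonzero map, because $\psi(G)$ can lie entirely inside $[F,F]$. For example, $G=\Z$ is residually free, and $\psi(1)=[x,y]\in F(x,y)$ is nontrivial on the generator. Yet the composite $G\to F(x,y)\to\Z^2$ is zero. So knowing $\psi(g)\ne1$ gives you no control over the abelianized image. The missing idea is Nielsen--Schreier: $\psi(G)$ is itself a nontrivial free group. Hence $G\twoheadrightarrow\psi(G)$, followed by the map from item (1) (equivalently, by the abelianization of $\psi(G)$, not of $F$), is an epimorphism onto $\Z$. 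This is exactly the paper's argument.

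\textbf{Smaller points.}
In (9), your reduction to (11) assumes a finite generating set. The paper also treats infinitely generated one-relator presentations: a generator not occurring in the relator is sent to $1$ and all others to $0$.

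In (10), Wirtinger generators are all conjugate only for knots; for links with several components they are not. Sending every Wirtinger generator to $1$ still works, because each Wirtinger relation is a conjugation relation. Your homology argument is fine in any case.

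In (15), citing $F_{\mathrm{ab}}\cong\Z^2$ is acceptable. The paper instead writes down the map $x_0\mapsto1$, $x_j\mapsto0$ for $j\ge1$, which visibly respects every relation of the infinite presentation.
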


\begin{proof}
For a free group, send one basis element to $1\in\Z$ and all
others to $0$. 

Use a coordinate projection for $\Z^n$, and the
projection onto the complement for $K\rtimes\Z$. 

In an HNN
extension, send the base group to $0$ and the stable letter to
$1$; each defining HNN relation has stable-letter exponent sum
zero. This also proves the Baumslag--Solitar assertion.

For a right-angled Artin group, send a chosen vertex generator to
$1$ and all other generators to $0$; the commutator relations are
preserved. 

A nontrivial finitely generated torsion-free nilpotent
group is infinite and has infinite abelianization. To recall the
latter implication, if the abelianization of a finitely generated
nilpotent group were finite, each lower central factor would be
finite: it is a homomorphic image of a tensor power of the
abelianization under the map induced by iterated commutators.
The finite lower central series would then make the whole group
finite; see also~\cite{KMBook}. Thus the abelianization has positive rank. 

A nontrivial
free solvable group maps onto $\Z$ by sending one free generator
to $1$ and the others to $0$.

For $B_n$, the exponent-sum map sending each standard braid
generator to $1$ respects both the braid and commuting relations.

A one-relator presentation on $r\geq2$ generators has
abelianization of rank at least $r-1$. In the one-generator case,
a nontrivial torsion-free group given by such a presentation is
$\Z$. If the generating set is infinite, a generator not occurring
in the relator can be sent to $1$ and every other generator to $0$.

A link group with $k\geq1$ components has abelianization
$\Z^k$, generated by the meridians; the knot case is $k=1$.

If a presentation has $r$ generators and $s<r$ relators, its
abelianization is a quotient of $\Z^r$ by a subgroup generated by
at most $s$ elements and hence has positive rank. 

For a graph of
groups, killing all vertex groups gives an epimorphism onto the
fundamental group of the underlying graph. The latter is a
nontrivial free group when the first Betti number is positive.

For an Artin group, send every standard generator to $1$; the two
sides of each Artin relation have the same length.

For a nontrivial residually free group $G$, choose $g\neq1$ and a
homomorphism $\psi:G\to F$ to a free group with $\psi(g)\neq1$.
The image $\psi(G)$ is a nontrivial free group and therefore maps
onto $\Z$. Composing with $G\twoheadrightarrow\psi(G)$ proves
the assertion; nontrivial limit groups are a special case.

Finally, the standard presentation
\[
F=\langle x_0,x_1,x_2,\ldots\mid
x_i^{-1}x_jx_i=x_{j+1}\ (0\leq i<j)\rangle
\]
of Thompson's group admits the epimorphism sending $x_0$ to $1$
and all $x_j$, $j\geq1$, to $0$.
\end{proof}

Below we list a few more  general properties of indicable groups.

\begin{proposition}\label{prop:indicable-closure}
The following hold:
\begin{enumerate}[label=\textup{(\roman*)}]
\item Every finite-index subgroup of an indicable group is indicable.
\item A direct product or a free product is indicable if one factor
      is indicable.
\item A graph product is indicable if at least one vertex group is
      indicable.
\end{enumerate}
\end{proposition}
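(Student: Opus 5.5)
The plan is to build each required epimorphism onto $\Z$ directly, using Proposition~\ref{prop:indicable-characterization} only in its form (i): an epimorphism $\chi:G\twoheadrightarrow\Z$. Parts (ii) and (iii) will come from composing $\chi$ with a natural epimorphism onto an indicable factor. Part (i) will come from restricting $\chi$ and checking that the image stays nontrivial.

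For (i), let $\chi:G\twoheadrightarrow\Z$ and let $L\leq G$ have finite index. I will show that $\chi(L)$ has finite index in $\chi(G)=\Z$. If $g_1L,\dots,g_kL$ are the cosets of $L$, then $\Z=\bigcup_j(\chi(g_j)+\chi(L))$. Hence $[\Z:\chi(L)]\leq[G:L]<\infty$. A finite-index subgroup of $\Z$ is nonzero, so $\chi(L)=d\Z$ for some $d\geq1$, and $d\Z\cong\Z$. Composing $\chi|_L$ with division by $d$ then gives an epimorphism $L\twoheadrightarrow\Z$. This is the only step with any content, and the single point to verify is the index inequality just stated. Finite index really is needed here, since $\chi$ could vanish on an infinite-index subgroup.

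For (ii), suppose the factor $G_1$ is indicable with $\chi_1:G_1\twoheadrightarrow\Z$. For a direct product $\prod_i G_i$ (of any family), I will compose the coordinate projection onto $G_1$ with $\chi_1$. For a free product $\ast_i G_i$, the universal property gives a homomorphism that equals $\chi_1$ on $G_1$ and is trivial on the other factors. In both cases the map is surjective because its restriction to $G_1$ already is.

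For (iii), let $G=\Gamma\{G_v\}$ be the graph product. It is the free product of the vertex groups modulo the relations $[G_u,G_v]=1$ for adjacent $u,v$. Let $G_{v_0}$ be an indicable vertex group. I will define a homomorphism $G\to G_{v_0}$ that is the identity on $G_{v_0}$ and trivial on every other vertex group. It respects every defining commutator relation, because each such relation involves two distinct vertices, and at least one of them is sent to $1$. This homomorphism is therefore a retraction of $G$ onto $G_{v_0}$. Composing it with $\chi_{v_0}$ gives an epimorphism $G\twoheadrightarrow\Z$. Part (ii) is in fact the special case of (iii) for the complete graph and the edgeless graph.
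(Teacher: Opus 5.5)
Your proposal is correct and follows the paper's proof essentially step for step: (i) uses the same coset-counting bound $[\Z:\chi(L)]\leq[G:L]$, and (ii) and (iii) use the same retractions onto the indicable factor (projection, or killing the other free factors or vertex groups), composed with $\chi$. One small caveat: your closing aside fails for an unrestricted direct product of infinitely many factors, which is not a graph product, but your direct argument for (ii) does not rely on it.
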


\begin{proof}
For \textup{(i)}, let $H\leq G$ have finite index and let
$\chi:G\twoheadrightarrow\Z$. The index
$[\Z:\chi(H)]$ is at most $[G:H]$, as follows by mapping a finite
set of coset representatives of $H$ to representatives of the
cosets of $\chi(H)$. Hence $\chi(H)=d\Z$ for some $d\geq1$;
compose $\chi|_H$ with the isomorphism $d\Z\cong\Z$.

For \textup{(ii)}, projection onto an indicable direct factor,
or killing all other free factors, gives a retraction onto that
factor. 

For \textup{(iii)}, killing every vertex group except a
chosen one respects all defining commuting relations of the
graph product and gives a retraction onto the chosen vertex
group. In both cases compose with its epimorphism onto $\Z$.
\end{proof}

\begin{remark}
The converse to Proposition~\ref{prop:indicable-closure}\textup{(i)}
fails. The infinite dihedral group has an infinite cyclic subgroup
of index two, but its abelianization is finite, so it is not
indicable.
\end{remark}
\section{Wreath products of groups}
\label{se:wreathproducts}

\subsection{General properties}
\label{ssec:wreath-conventions}

Let $B$ and $H$ be groups. Throughout the paper, $B\wr H$ denotes
their \emph{restricted regular wreath product}. Its base group is
\[
N=B^{(H)}=\{f:H\to B:\supp(f)\text{ is finite}\},
\qquad \supp(f)=\{x\in H:f(x)\ne1_B\},
\]
with pointwise multiplication. Thus $N$ is the restricted direct product
of the coordinate groups $B_x$, $x\in H$, rather than the unrestricted
direct product. The word \emph{regular} refers to the action of $H$ on
its own underlying set by left multiplication.

We use the right-exponent convention
\[
f^h(x)=f(hx)\qquad(f\in N,\ h,x\in H).
\]
It satisfies $(f^{h_1})^{h_2}=f^{h_1h_2}$. The wreath product is
$G=B\wr H=N\rtimes H$, but we write its elements in the unique
normal form $hf$, with the active factor first. Multiplication is
\begin{equation}\label{eq:1}
(h_1f_1)(h_2f_2)=h_1h_2 f_1^{h_2}f_2.
\end{equation}
In particular, the exponent agrees with conjugation:
$f^h=h^{-1}fh$. We identify $H$ with its canonical copy in $G$,
and $B$ with the coordinate subgroup at $1_H$. Thus $b\in B$ also
denotes the function with value $b$ at $1_H$ and value $1_B$ elsewhere.
For $b\ne1_B$, the support of $b^h$ is $\{h^{-1}\}$.
All commutators have the convention
\[
[x,y]=x^{-1}y^{-1}xy,
\qquad [x_1,\ldots,x_i]=[[x_1,\ldots,x_{i-1}],x_i].
\]

\begin{proposition}\label{prop1}
Let $G=B\wr H$ and $N=B^{(H)}$.
\begin{enumerate}[label=\textup{(\roman*)}]
\item $N$ is the normal closure of the canonical subgroup $B$ in $G$.
\item If $B$ is abelian, then $N$ is abelian and is a right
$\Z H$-module. In multiplicative notation its module operation is
\[
f^u=\prod_{j=1}^m(f^{n_j})^{h_j}
\quad\text{for}\quad u=\sum_{j=1}^m n_jh_j\in\Z H.
\]
\item If $B$ is abelian and $1\ne b\in B$, then $C_G(b)=N$.
\end{enumerate}
\end{proposition}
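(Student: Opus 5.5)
The plan is to prove the three parts in order, working only from the normal form $hf$ and the multiplication rule \eqref{eq:1}.

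For \textup{(i)}, I would first note that $N$ is normal in $G$: it is the kernel of the canonical retraction $hf\mapsto h$, which is a homomorphism by \eqref{eq:1}. Since $B\le N$, the normal closure of $B$ lies in $N$. For the reverse inclusion, observe that for $b\in B$ the conjugate $b^{h}$ is the function with value $b$ at $h^{-1}$ and $1_B$ elsewhere. So every coordinate subgroup $B_x$ is $B^{x^{-1}}$. Every $f\in N$ has finite support, so it is the finite product $\prod_{x\in\supp f}$ of its coordinate components. These components commute, because they have disjoint supports. Hence $f$ lies in the normal closure, which gives \textup{(i)}.

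For \textup{(ii)}, pointwise multiplication of $B$-valued functions is commutative when $B$ is abelian, so $N$ is abelian. The maps $f\mapsto f^h$ are automorphisms of $N$ that satisfy $(f^{h_1})^{h_2}=f^{h_1h_2}$, so $H$ acts on the right. Integer powers commute with these automorphisms. Therefore extending by $f^{\sum n_jh_j}=\prod (f^{n_j})^{h_j}$ gives a well-defined right $\Z H$-module structure. I would check the module axioms briefly:
\begin{itemize}
\item additivity in $u$ is immediate from the definition;
\item additivity in $f$ follows because both operations are homomorphisms of the abelian group $N$;
\item $(f^u)^v=f^{uv}$ reduces to the group-element case by bilinearity.
\end{itemize}

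For \textup{(iii)}, we have $N\le C_G(b)$ because $N$ is abelian. Conversely, let $g=hf$ centralize $b$. Since $f$ commutes with $b$, we get $b^{g}=(b^h)^f=b^h$. The element $b^h$ has support $\{h^{-1}\}$, while $b$ has support $\{1_H\}$, and both supports are nonempty because $b\ne1_B$. So $b^g=b$ forces $h^{-1}=1_H$, that is, $h=1$, and therefore $g=f\in N$.

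The only real care needed is keeping the right-action conventions consistent: $f^h(x)=f(hx)$ and the resulting support computation $\supp(b^h)=\{h^{-1}\}$. I expect no substantive obstacle beyond that bookkeeping.
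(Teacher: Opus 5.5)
Your proposal is correct and follows the paper's proof essentially step for step. For (i) you use normality of $N$ and identify the coordinate subgroups with the conjugates $B^{x^{-1}}$; for (ii) you derive the module identities from commutativity of $N$ and the right action; for (iii) you compare the supports $\{h^{-1}\}$ and $\{1_H\}$. One small wording fix: the equality $(b^h)^f=b^h$ holds because $b^h$ and $f$ both lie in the abelian group $N$, not merely because $f$ commutes with $b$.
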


\begin{proof}
The base $N$ is normal, and the conjugates $B^h$, $h\in H$, are
exactly its coordinate subgroups. Every finitely supported function is
a finite product of coordinate functions, proving \textup{(i)}.

If $B$ is abelian, pointwise multiplication makes $N$ abelian. The
displayed formula extends the action of $H$ additively in the group-ring
variable. The identities
\[
f^{u+v}=f^u f^v,\qquad (fg)^u=f^u g^u,
\qquad (f^u)^v=f^{uv}
\]
follow from commutativity of $N$ and the right-action identity above;
these are precisely the module axioms in multiplicative notation.

For \textup{(iii)}, the base $N$ centralizes $b$. If $hf\in G$, then
$b^{hf}=(b^h)^f=b^h$, since $b^h,f\in N$. The supports of $b^h$
and $b$ are $\{h^{-1}\}$ and $\{1_H\}$, respectively. Thus
$b^{hf}=b$ if and only if $h=1_H$, proving $C_G(b)=N$.
\end{proof}

\subsection{Retracts in wreath products}

We retain the notation $G=B\wr H=N\rtimes H$ and the normal form $hf$
from Section~\ref{ssec:wreath-conventions}: $f\in B^{(H)}$ and
$f^h(x)=f(hx)$.  In particular,
\[
(h_1f_1)(h_2f_2)=h_1h_2f_1^{h_2}f_2.
\]

\begin{proposition}[Wreath products preserve indicability]
\label{prop:wreath-indicable}
Let $B$ and $H$ be arbitrary groups.  If either $B$ or $H$ is indicable,
then $B\wr H$ is indicable.
\end{proposition}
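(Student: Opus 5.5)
The plan is to build an explicit epimorphism $B\wr H\to\Z$ in each case, composing a suitable homomorphism onto an indicable factor with that factor's epimorphism onto $\Z$.

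\emph{Case $H$ indicable.} The canonical projection $\pi:B\wr H\to H$, $hf\mapsto h$, is a homomorphism by \eqref{eq:1}. It is the retraction onto the complement in $G=N\rtimes H$. Composing $\pi$ with an epimorphism $\chi:H\twoheadrightarrow\Z$ gives $\chi\circ\pi:G\twoheadrightarrow\Z$, so $G$ is indicable. This case is immediate.

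\emph{Case $B$ indicable.} Let $\chi:B\twoheadrightarrow\Z$. Define $\sigma:N\to\Z$ by
\[
\sigma(f)=\sum_{x\in\supp(f)}\chi(f(x)).
\]
The sum is finite because supports are finite. Since $\Z$ is abelian and multiplication in $N$ is pointwise, $\sigma$ is a homomorphism. Then define
\[
\Phi:G\to\Z,\qquad \Phi(hf)=\sigma(f).
\]

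The key step is checking that $\Phi$ is a homomorphism. By \eqref{eq:1},
\[
\Phi\bigl((h_1f_1)(h_2f_2)\bigr)=\sigma(f_1^{h_2}f_2)=\sigma(f_1^{h_2})+\sigma(f_2).
\]
So it suffices to show $\sigma(f^{h})=\sigma(f)$. This holds because $f^h(x)=f(hx)$ just reindexes the coordinates via the bijection $x\mapsto hx$ of $H$. It maps $\supp(f^h)$ bijectively onto $\supp(f)$, and the sum is unchanged.

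Surjectivity follows by choosing $b\in B$ with $\chi(b)=1$. The coordinate element $b$ at $1_H$ satisfies $\Phi(b)=1$. Equivalently, $\Phi$ restricted to the canonical copy of $B$ is $\chi$, so $G\twoheadrightarrow\Z$.

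The only point needing care is the $H$-invariance of $\sigma$, which is what lets the twisted multiplication factor through. Since $\Z$ is abelian, it is nothing more than the reindexing observation above. No finite generation is needed anywhere, which matches the statement for arbitrary $B$ and $H$.
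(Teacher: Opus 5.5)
Your proof is correct and follows essentially the same route as the paper: in the first case you compose the projection onto $H$ with $H\twoheadrightarrow\Z$, and in the second you use the coordinate-sum homomorphism $\sigma(f)=\sum_x\chi(f(x))$, show it is $H$-invariant by reindexing, and extend it to $hf\mapsto\sigma(f)$. Summing over $\supp(f)$ rather than over all of $H$ is harmless, since $\chi(1_B)=0$; for the same reason, the additivity $\sigma(fg)=\sigma(f)+\sigma(g)$ is fine even though $\supp(fg)$ may be smaller than $\supp(f)\cup\supp(g)$.
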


\begin{proof}
If $H$ is indicable, compose the projection $hf\mapsto h$ with an
epimorphism $H\twoheadrightarrow\Z$.

Suppose that $\chi:B\twoheadrightarrow\Z$ is an epimorphism.  For
$f\in B^{(H)}$, define
\[
\sigma(f)=\sum_{x\in H}\chi(f(x)).
\]
This finite sum defines a homomorphism $B^{(H)}\to(\Z,+)$, and
$\sigma(f^h)=\sigma(f)$ for every $h\in H$, since $x\mapsto hx$
permutes $H$.  Consequently the map
\[
\widehat\chi:B\wr H\longrightarrow\Z,
\qquad \widehat\chi(hf)=\sigma(f),
\]
is a homomorphism: indeed,
\[
\widehat\chi((h_1f_1)(h_2f_2))
=\sigma(f_1^{h_2}f_2)=\sigma(f_1)+\sigma(f_2).
\]
Choose $b\in B$ with $\chi(b)=1$.  The function equal to $b$ at $1_H$
and to $1_B$ elsewhere has image $1$ under $\sigma$, so
$\widehat\chi$ is surjective.
\end{proof}

\begin{theorem}\label{retract}
Let $B$ and $H$ be arbitrary groups.  If $C\leq B$ is an abelian retract
of $B$ and $D\leq H$ is a retract of $H$, then $B\wr H$ has a retract
canonically isomorphic to $C\wr D$.
\end{theorem}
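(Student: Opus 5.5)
The plan is to construct an explicit retraction $\Phi:B\wr H\to C\wr D$ by combining the retraction $\alpha:B\to C$ with the retraction $\beta:H\to D$, and summing over fibres of $\beta$. First, observe that $C\wr D$ embeds canonically in $B\wr H$: send $d g$ (with $g\in C^{(D)}$) to $d\,\tilde g$, where $\tilde g(x)=g(x)$ for $x\in D$ and $\tilde g(x)=1_B$ otherwise. Using $f^h(x)=f(hx)$ and the fact that $D$ is closed under left multiplication by $D$, the multiplication rule \eqref{eq:1} is respected. So this is an injective homomorphism, and its image is the candidate retract.

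Next, define $\Phi(hf)=\beta(h)\,\bar f$, where
\[
\bar f(y)=\prod_{x\in H,\ \beta(x)=y}\alpha(f(x)),\qquad y\in D.
\]
This product is finite and independent of order because $C$ is abelian; this is exactly where abelianness of $C$ enters. The map $f\mapsto\bar f$ is a homomorphism $B^{(H)}\to C^{(D)}$, since $\alpha$ is a homomorphism and $C$ is abelian. Its support lies in $\beta(\supp f)$, hence is finite.

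The key check, and the main (mild) obstacle, is equivariance: $\overline{f^h}=\bar f^{\beta(h)}$. We compute
\[
\overline{f^h}(y)=\prod_{\beta(x)=y}\alpha(f(hx)).
\]
Substituting $z=hx$, the condition becomes $\beta(z)=\beta(h)y$, so the right-hand side equals $\bar f(\beta(h)y)=\bar f^{\beta(h)}(y)$. With this identity,
\[
\Phi\bigl((h_1f_1)(h_2f_2)\bigr)=\beta(h_1h_2)\,\overline{f_1}^{\beta(h_2)}\overline{f_2}=\Phi(h_1f_1)\Phi(h_2f_2),
\]
so $\Phi$ is a homomorphism.

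Finally, $\Phi$ restricts to the identity on the embedded copy of $C\wr D$. For $d\in D$ we have $\beta(d)=d$. For $\tilde g$ supported in $D$ with values in $C$, we get $\bar{\tilde g}(y)=\prod_{\beta(x)=y}\alpha(\tilde g(x))$. The only possibly nontrivial factor occurs at $x\in D$ with $\beta(x)=y$, which forces $x=y$, and there $\alpha(g(y))=g(y)$. Hence $\bar{\tilde g}=g$, and $\Phi$ is a retraction onto a subgroup canonically isomorphic to $C\wr D$.
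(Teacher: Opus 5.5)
Your proposal is correct and follows essentially the same route as the paper. The paper also folds each fibre of the retraction $q\colon H\to D$ by multiplying the $p$-images in the abelian group $C$, proves the equivariance $P(f^h)=P(f)^{q(h)}$ by the same substitution, and checks $R\circ J=\operatorname{id}$ on the embedding that extends functions by $1_B$; the only cosmetic difference is that the paper writes the retraction explicitly as the composite $J\circ R$ onto $J(C\wr D)$, which is what your final identification amounts to.
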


\begin{proof}
Fix retractions $p:B\to C$ and $q:H\to D$.  We construct homomorphisms
\[
C\wr D\xrightarrow{\ J\ }B\wr H
\xrightarrow{\ R\ }C\wr D
\quad\text{such that}\quad R\circ J=\operatorname{id}_{C\wr D}.
\]

For $f\in B^{(H)}$, put
\begin{equation}\label{eq:P-definition}
P(f)(z)=\prod_{\substack{x\in H\\q(x)=z}}p(f(x)),
\qquad z\in D.
\end{equation}
Only finitely many factors are nontrivial, and their order does not matter
because $C$ is abelian.  Moreover,
$\supp(P(f))\subseteq q(\supp(f))$, so $P(f)\in C^{(D)}$.
For $f,g\in B^{(H)}$, commutativity in $C$ gives
\[
P(fg)(z)
=\prod_{q(x)=z}p(f(x))p(g(x))
=P(f)(z)P(g)(z).
\]
Thus $P:B^{(H)}\to C^{(D)}$ is a homomorphism.

The actions on both base groups use the same right-exponent convention.
For $h\in H$ and $z\in D$, the substitution $y=hx$ gives
\begin{align*}
P(f^h)(z)
&=\prod_{q(x)=z}p(f(hx))\\
&=\prod_{q(y)=q(h)z}p(f(y))\\
&=P(f)(q(h)z)=P(f)^{q(h)}(z).
\end{align*}
Hence
\begin{equation}\label{eq:equivariance}
P(f^h)=P(f)^{q(h)}.
\end{equation}
Define
\[
R:B\wr H\longrightarrow C\wr D,
\qquad R(hf)=q(h)P(f).
\]
Using the normal-form multiplication in the two wreath products, we obtain
\begin{align*}
R((h_1f_1)(h_2f_2))
&=q(h_1)q(h_2)P(f_1^{h_2}f_2)\\
&=q(h_1)q(h_2)P(f_1)^{q(h_2)}P(f_2)\\
&=(q(h_1)P(f_1))(q(h_2)P(f_2)).
\end{align*}
Therefore $R$ is a homomorphism.

For $u\in C^{(D)}$, extend $u$ to a function $\widetilde u\in B^{(H)}$
by setting
\begin{equation}\label{eq:extension}
\widetilde u(x)=
\begin{cases}
u(x),&x\in D,\\
1_B,&x\notin D.
\end{cases}
\end{equation}
The extension map is an injective homomorphism and satisfies
$\widetilde{u^d}=(\widetilde u)^d$ for every $d\in D$: multiplication
by $d$ on the left preserves both $D$ and $H\setminus D$.
Consequently
\[
J:C\wr D\longrightarrow B\wr H,
\qquad J(du)=d\widetilde u,
\]
is an injective homomorphism.  Explicitly,
\[
J((d_1u_1)(d_2u_2))
=d_1d_2\widetilde{u_1^{d_2}u_2}
=d_1d_2(\widetilde u_1)^{d_2}\widetilde u_2
=J(d_1u_1)J(d_2u_2).
\]

Finally, $q(d)=d$ for $d\in D$.  In the product defining
$P(\widetilde u)(z)$, every factor indexed by $x\notin D$ is trivial;
among the indices in $D$, the equality $q(x)=z$ holds only at $x=z$.
Since $p$ fixes $C$ pointwise, it follows that
\[
P(\widetilde u)(z)=p(u(z))=u(z).
\]
Thus
\[
R\circ J=\operatorname{id}_{C\wr D}.
\]
In particular, $J$ identifies $C\wr D$ with a subgroup of $B\wr H$,
and the homomorphism
\[
J\circ R:B\wr H\longrightarrow J(C\wr D)
\]
fixes that subgroup pointwise. Indeed, for every $v\in C\wr D$,
\[
(J\circ R)(J(v))=J\bigl((R\circ J)(v)\bigr)=J(v).
\]
Therefore $J(C\wr D)$ is a retract of $B\wr H$, as required.
\end{proof}

\begin{remark}
The abelian hypothesis on $C$ is used when multiplying over the fibers
of $q$. In general, this construction cannot be used with nonabelian $C$:
different coordinates of $B^{(H)}$ commute, and folding two such
coordinates into a single coordinate would require their images to
commute.  No commutativity assumption on $D$ is needed.
\end{remark}

\begin{corollary}
    Let $B$ and  $H$ be arbitrary groups. Then $H$ is a retract of $B \wr H$.
\end{corollary}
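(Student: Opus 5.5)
The plan is to derive this directly from Theorem~\ref{retract}, taking $C$ to be the trivial subgroup of $B$ and $D=H$. The trivial subgroup $C=\{1_B\}$ is abelian, and it is a retract of $B$ via the trivial homomorphism $p:B\to C$, which fixes $C$ pointwise. Likewise $D=H$ is a retract of $H$ via $q=\operatorname{id}_H$. Theorem~\ref{retract} then yields a retract of $B\wr H$ canonically isomorphic to $\{1\}\wr H$.

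Next we identify this retract with the canonical copy of $H$ in $G$. The base group $\{1\}^{(H)}$ is trivial, so $\{1\}\wr H\cong H$. Unwinding the embedding $J$ from the proof of Theorem~\ref{retract}: $J(d\,u)=d\widetilde u$ with $\widetilde u$ trivial. Hence $J$ sends $h\in H$ to the normal form $h\cdot 1$, which is the canonical copy of $H$. Unwinding $R$: since $P(f)$ is always trivial, $R(hf)=q(h)=h$, so $J\circ R$ is the projection $hf\mapsto h$.

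As an independent check, the projection $hf\mapsto h$ is a homomorphism by \eqref{eq:1}, because $(h_1f_1)(h_2f_2)=h_1h_2\,f_1^{h_2}f_2$ maps to $h_1h_2$. It is the identity on $H$. This is just the remark that the projection onto the complement of the semidirect product $N\rtimes H$ is a retraction.

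There is no real obstacle. The only point needing care is the degenerate case $C=\{1\}$ in Theorem~\ref{retract}: we must check that the empty-or-trivial products defining $P$ behave correctly, and that the retract is literally the canonical $H$ and not merely isomorphic to it. Both follow from the explicit formulas for $J$ and $R$ above.
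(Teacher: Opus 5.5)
Your proposal is correct and follows the paper's own proof, which likewise applies Theorem~\ref{retract} with $C$ the trivial (abelian) retract of $B$ and $D=H$. Your unwinding of $J$ and $R$ is a useful addition the paper leaves implicit: it shows the retract is literally the canonical copy of $H$, with retraction $hf\mapsto h$.
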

\begin{proof}
It follows from Theorem \ref{retract} since the identity subgroup is an abelian retract of $B$, and $H$ is a retract of $H$.
\end{proof}

\begin{proposition}\label{prop:indicable-wreath-retract}
If $B,H\in\mathcal I$, then $B\wr H$ has a retract isomorphic to
$\Z\wr\Z$.
\end{proposition}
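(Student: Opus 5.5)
The plan is to apply Theorem~\ref{retract} with $C$ and $D$ both infinite cyclic retracts supplied by Proposition~\ref{prop:indicable-characterization}. Since $B\in\mathcal I$, part (ii) of that proposition gives an infinite cyclic subgroup $C=\langle s\rangle\leq B$ that is a retract of $B$. Explicitly, if $\chi_B:B\twoheadrightarrow\Z$ and $\chi_B(s)=1$, then $p(b)=s^{\chi_B(b)}$ is a retraction. In the same way, from $\chi_H:H\twoheadrightarrow\Z$ and $\chi_H(t)=1$ we obtain the infinite cyclic retract $D=\langle t\rangle\leq H$ with retraction $q(h)=t^{\chi_H(h)}$.

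The subgroup $C$ is cyclic and hence abelian, so the hypotheses of Theorem~\ref{retract} hold. That theorem then yields a retract of $B\wr H$ canonically isomorphic to $C\wr D$, namely $J(C\wr D)$ with retraction $J\circ R$.

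It remains to observe that $C\wr D\cong\Z\wr\Z$. Any isomorphisms $\alpha:\Z\to C$ and $\beta:\Z\to D$ induce an isomorphism $\Z\wr\Z\to C\wr D$ given by $hf\mapsto \beta(h)\,(\alpha\circ f\circ\beta^{-1})$. This map is compatible with the normal-form multiplication~\eqref{eq:1} because $\beta$ transports the left-regular action. Composing this isomorphism with $J$ identifies $\Z\wr\Z$ with a retract of $B\wr H$.

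I do not expect a real obstacle here, since the argument is a direct combination of two earlier results. The only point needing care is the functoriality of the regular wreath product under isomorphisms of both factors, which is routine.
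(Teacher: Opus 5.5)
Your proof is correct and matches the paper's argument: take infinite cyclic retracts $C\leq B$ and $D\leq H$ from Proposition~\ref{prop:indicable-characterization}, note that $C$ is abelian, and apply Theorem~\ref{retract}. Your explicit check that isomorphisms of both factors induce $\Z\wr\Z\cong C\wr D$ (compatible with $f^h(x)=f(hx)$) is a routine detail the paper leaves implicit.
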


\begin{proof}
By Proposition~\ref{prop:indicable-characterization}, there are infinite
cyclic retracts $C\leq B$ and $D\leq H$.  The group $C$ is abelian, so
Theorem~\ref{retract} gives a retract isomorphic to
$C\wr D\cong\Z\wr\Z$.
\end{proof}

\subsection{Retracts and the Diophantine problem in wreath products}

For the algorithmic assertions below, groups are finitely generated and
coefficients are represented by words in fixed finite generating sets.
If $B$ and $H$ are finitely generated, so is $B\wr H$.  In case of finitely generated groups decidability of the Diophantine problem is independent of a chosen finite set of generators.

\begin{theorem}\label{thm:main}
Let $B$ and $H$ be arbitrary finitely generated groups, with $B$
passive and $H$ active, and put $G=B\wr H$. Then
\[
\boxed{
\Dio(B)\le_P\Dio(G)
\qquad\text{and}\qquad
\Dio(H)\le_P\Dio(G).
}
\]
In particular, if the Diophantine problem in either $B$ or $H$
is undecidable, then the Diophantine problem in $G$ is undecidable.
\end{theorem}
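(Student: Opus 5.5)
The two reductions need different tools, and I will handle them separately.

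For $\Dio(H)\le_P\Dio(G)$ I will use the retract principle directly. By the Corollary following Theorem~\ref{retract}, $H$ is a retract of $G$; concretely, $hf\mapsto h$ is a retraction. Since $G$ is finitely generated, Proposition~\ref{prop:retract-principle} yields $\Dio(H)\le_P\Dio(G)$.

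For $\Dio(B)\le_P\Dio(G)$ the retract route fails in general, because Theorem~\ref{retract} needs $C$ to be abelian. I will therefore use e-interpretability instead, via Lemma~\ref{lem:interpretation-reduction}. If $B=1$ there is nothing to prove, so assume $B\ne1$.

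\textbf{Step 1: e-define the base group $N$.} If $H=1$ then $G=B$ and the claim is trivial, so fix $1\ne t\in H$. The plan is to show that $N=B^{(H)}$ is the centralizer of a finite set of elements. Take $a=b\,b'^{\,t}$ for a suitable $b\ne1$. In the nonabelian case this is the delicate part. An element $hf$ commuting with an element of $N$ whose support is $S$ forces $hS=\ldots$, roughly $h^{-1}S=S$ up to coordinates. Since the support is finite, $h$ permutes a finite set, so $h$ has finite order and need not be trivial.

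To get around this, I will first try the following: e-define $N$ as the set of $x$ such that $x$ commutes with $y^{-1}cy$ for all conjugates of a fixed element. That is a universal condition, however, and so not Diophantine.

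The better route is to interpret $B$ directly. $B$ itself, the coordinate at $1_H$, should be describable as
\[
\{\,x\in G: [x,\,c^{g}]=1\ \text{for some fixed coefficients } c, g\,\}.
\]
Elements of $N$ supported away from $1_H$ commute with $B$. So the plan is to take $E=\Cent_G(\{c^{s}: s\in S\})$ for a finite set $S\subset H\setminus\{1\}$ and $c\ne1$ in $B$. Then $E$ contains $B\times(\text{coordinates outside } S^{-1})$ together with those $h$ fixing things. Projecting onto the coordinate $1_H$ then gives $B$.

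Concretely, I will take $K=E\cap N$, with $N$ replaced by the e-definable kernel of the retraction where possible. Then I use the homomorphism $f\mapsto f(1_H)$ on $K$: it is onto $B$, and its kernel $L$ consists of the functions in $K$ vanishing at $1_H$. I then need $L$ to be e-definable, for instance $L=K\cap\Cent_G(B)$, where $\Cent_G(B)$ is a centralizer of the finitely many generators of $B$, hence e-definable. Once $L\trianglelefteq K$ are both e-definable, Lemma~\ref{eq:quotientInterp} e-interprets $K/L\cong B$.

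\textbf{Step 2: definability of $N$ inside these sets.} This is the main obstacle. Centralizers may contain elements $hf$ with $h\ne1$ of finite order. My first attempt will be to replace $K$ by
\[
K'=\Cent_G(B_{\text{gens}}^{\,t})\cap\Cent_G(\ldots),
\]
and then to check directly that any $hf$ commuting with $b^{t}$ and with $b^{t'}$ for two well-chosen $t,t'$ must satisfy $h=1$. If $H$ has torsion, this may fail. The fallback is to avoid needing $K\subseteq N$ at all: define the map $K\to B$ by $x\mapsto$ the $1_H$-coordinate of $x$ modulo whatever $H$-part occurs, or simply observe that $K\cap\ker(\pi)$ is e-definable as $\{x: x=yz,\ \ldots\}$.

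Finally, polynomial time follows from Lemma~\ref{lem:interpretation-reduction}, and the undecidability statement then follows from Corollary~\ref{karp}.
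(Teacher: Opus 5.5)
\textbf{Verdict.} Your reduction $\Dio(H)\le_P\Dio(G)$ is the paper's: the projection $hf\mapsto h$ is a retraction, and the retract principle applies. Your reduction for $B$, however, has a genuine gap in two places.

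\textbf{Step 2 is left open.} The fallbacks you list are not arguments, but the obstacle does not actually arise for the set $E$ you use. Each $c^{s}$ is supported at the single point $s^{-1}$. The conjugate $(c^{s})^{hf}=f^{-1}c^{sh}f$ is supported at the single point $h^{-1}s^{-1}$. Since $H$ acts freely on itself by left multiplication, commuting with $c^{s}$ forces $h=1$. Hence $E=\Cent_G(\{c^{s}:s\in S\})\subseteq N$ automatically. There are no nontrivial ``$h$ fixing things'', and you never need $N$ itself to be e-definable. Elements $h\ne1$ of finite order only enter when the support has at least two points, as for your abandoned $b\,b'^{\,t}$.

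\textbf{The kernel $L$ is defined incorrectly.} $K\cap\Cent_G(B)$ is not the kernel $L$ of $f\mapsto f(1_H)$. For $f\in N$, $f$ commutes with $B_1$ if and only if $f(1_H)\in Z(B)$. So $K\cap\Cent_G(B)$ is the full preimage of $Z(B)$, and $K/(K\cap\Cent_G(B))\cong B/Z(B)$. For abelian $B$ you would interpret the trivial group, so the interpretation as written does not produce $B$.

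\textbf{The missing idea: no quotient is needed.} Evaluation at $1_H$ is a homomorphism $K\to B$ that is the identity on $B_1$, so $B_1$ is a retract of the e-definable subgroup $K$. Adjoin $[X_i,c^{s}]=1$ for every unknown (a single $s\ne1$ suffices) and keep the coefficients in $B_1$. A solution in $B$ is then a solution in $G$. Conversely, any solution in $G$ lies in $K$, and evaluation maps it to a solution in $B$, because evaluation fixes the coefficients. This is exactly the paper's argument: it takes $u=\varepsilon_{h_0}(b_0)$, shows $\Cent_G(u)=\varepsilon_{h_0}(C_B(b_0))\times\bigoplus_{h\ne h_0}B_h$, and uses the coordinate retraction $r(f)=f(1)$ onto the direct factor $B_1$.
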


\begin{proof}
Write
\[
G=N\rtimes H,\qquad N=\bigoplus_{h\in H}B_h,
\]
where $B_h\cong B$ is the coordinate subgroup indexed by $h$.
Thus $N$ consists of the finitely supported functions
$f\colon H\to B$, with pointwise multiplication. Set
\[
(\alpha_h f)(x)=f(h^{-1}x)\qquad(h,x\in H).
\]
For elements of $B \wr H$ expressed as pairs in $B\times H$, the multiplication and inverse formulas are
\begin{align*}
(f,h)(g,k)&=(f\,\alpha_{h^{-1}}(g),hk),\\
(f,h)^{-1}&=(\alpha_{h}(f^{-1}),h^{-1}),
\end{align*}
where $f^{-1}(x)=f(x)^{-1}$. Let
$\iota(h)=(1,h)$, and let $\varepsilon_h(b)\in N$ take the value
$b$ at $h$ and the value $1$ elsewhere. The maps
$\iota\colon H\hookrightarrow G$ and
$\varepsilon_1\colon B\hookrightarrow G$ are the canonical
embeddings of the factors.

\medskip
\noindent\emph{Reduction from the active group $H$ to $G$.}
The canonical projection
\[
\pi\colon G\longrightarrow H,\qquad \pi(f,h)=h
\]
satisfies $\pi\circ\iota=\operatorname{id}_H$. Equivalently,
$\iota\circ\pi$ is a retraction of $G$ onto $\iota(H)$. By Proposition \ref{prop:retract-principle} $\Dio(H)\le_P\Dio(G)$, as required.

\medskip
\noindent\emph{Reduction from the passive group $B$ to $G$.}
If $H=1$, then $G=B$ and the identity map is a
reduction. If $B=1$, every system over $B$ has a solution, so
every instance can be sent to the solvable equation $X=1$ in $G$.
Hence assume that $B\ne1$ and $H\ne1$.

Choose $b_0\in B\setminus\{1\}$ and $h_0\in H\setminus\{1\}$, and put
\begin{equation}\label{eq:u}
u=\varepsilon_{h_0}(b_0)
 =\iota(h_0)\varepsilon_1(b_0)\iota(h_0)^{-1}.
\end{equation}
We claim that
\begin{equation}\label{eq:centralizer}
C:=C_G(u)
 =\varepsilon_{h_0}\bigl(C_B(b_0)\bigr)
   \times\bigoplus_{h\ne h_0}B_h.
\end{equation}
Indeed, if $q=(f,k)\in G$, then $q^{-1}uq$ has value
\[
f(x)^{-1}\,u(kx)\,f(x)
\]
at $x\in H$. This value is nontrivial precisely when $x=k^{-1}h_0$.
Thus $q^{-1}uq=u$ forces $k^{-1}h_0=h_0$, and hence $k=1$.
For $f\in N$, commuting with $u$ is equivalent to
$f(h_0)\in C_B(b_0)$; all other coordinates are unrestricted.
This proves \eqref{eq:centralizer}.

Since $h_0\ne1$, the subgroup $B_1$ is a direct factor of $C$.
The coordinate projection
\[
r\colon C\longrightarrow B,\qquad r(f)=f(1)
\]
satisfies $r\circ\varepsilon_1=\operatorname{id}_B$, so
$\varepsilon_1\circ r$ is a retraction onto $B_1$.
Moreover, $C$ is defined in $G$ by the single equation
\begin{equation}\label{eq:domain}
[X,u]=1,
\end{equation}
where $[x,y]=x^{-1}y^{-1}xy$.

Let $\mathbf X=(X_1,\ldots,X_n)$ and
$\mathbf b=(b_1,\ldots,b_s)$. Given a finite system over $B$,
\[
\Sigma:\qquad w_j(\mathbf X;\mathbf b)=1
\qquad(1\le j\le m),
\]
form the following system over $G$:
\begin{equation}\label{eq:translated}
\widehat\Sigma:\qquad
\begin{cases}
w_j(\mathbf X;\varepsilon_1(b_1),\ldots,\varepsilon_1(b_s))=1,
 &1\le j\le m,\\
[X_i,u]=1,
 &1\le i\le n.
\end{cases}
\end{equation}
A solution of $\Sigma$ in $B$ gives a solution of
$\widehat\Sigma$ by applying $\varepsilon_1$ to its entries:
elements supported at $1$ commute with $u$, which is supported at
$h_0\ne1$. Conversely, every entry of a solution of
$\widehat\Sigma$ lies in $C$. Applying $r$ gives a solution of
$\Sigma$ in $B$, since $r(\varepsilon_1(b_j))=b_j$. Therefore
\[
\Sigma\text{ has a solution in }B
\quad\Longleftrightarrow\quad
\widehat\Sigma\text{ has a solution in }G.
\]

This transformation is computable in polynomial time. For fixed finite generating
sets $S_B$ and $S_H$ of the factors,
$\varepsilon_1(S_B)\cup\iota(S_H)$ generates $G$. Coefficient words
over $S_B$ are simply viewed as words over $\varepsilon_1(S_B)$.
Fix words representing $b_0$ and $h_0$ once and for all. The
formula (\ref{eq:u}) for $u$ then gives a fixed word in the
generators of $G$, so adjoining \eqref{eq:domain} for each unknown
is computable in polynomial time.  This proves $\Dio(B)\le_P\Dio(G)$ and completes the proof.
\end{proof}

\begin{corollary}[Finite active group]\label{cor:finite}
Let $B$ be a finitely generated group and let $H$ be a finite group.
Then
\[
\boxed{\Dio(B\wr H)\text{ is decidable}
\quad\Longleftrightarrow\quad
\Dio(B)\text{ is decidable}.}
\]
\end{corollary}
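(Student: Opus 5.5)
The forward implication is immediate from Theorem~\ref{thm:main}. Since $\Dio(B)\le_P\Dio(B\wr H)$, a decision procedure for $\Dio(B\wr H)$ composed with the reduction decides $\Dio(B)$. This direction needs neither finiteness of $H$ nor anything beyond finite generation of $B$. (If $B$ is trivial the statement is immediate in both directions, since $B\wr H\cong H$ is finite.)

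For the converse, I would use the fact that when $H$ is finite of order $n$, the base group $N=B^{(H)}=B^n$ is a direct power of $B$ of finite index $n$ in $G=B\wr H$. The plan is to reduce a system over $G$ to finitely many systems over $B$, in three steps.

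\emph{Step one: decide the $H$-components.} Given a system $\Sigma(X_1,\dots,X_k)$ over $G$, write each unknown as $X_i=h_iY_i$ with $h_i\in H$ and $Y_i\in N$. Branch over all $n^k$ tuples $(h_1,\dots,h_k)\in H^k$; this is a finite, effective case split. The system is solvable iff some branch is.

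\emph{Step two: rewrite each branch over $N$.} For a fixed branch, each coefficient is also written in normal form $h f$, which is computable from its word since $H$ is finite. Using \eqref{eq:1}, each equation $w(X;c)=1$ can be rewritten by moving all $H$-letters to the left. This produces an equation of the form $h_w\cdot\prod(\text{conjugates of }Y_i^{\pm1}\text{ and of coefficient functions by elements of }H)=1$. If $h_w\neq 1$, the branch is discarded. Otherwise the equation is an equation in $N$, in which the unknowns appear twisted by permutations of coordinates.

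\emph{Step three: pass to $B$.} Replace each $Y_i$ by $n$ unknowns $Y_{i,x}$ ($x\in H$) ranging over $B$. Since $Y_i^h(x)=Y_i(hx)$, evaluating the equation coordinatewise turns one equation in $N$ into $n$ equations over $B$. These use the unknowns $Y_{i,x}$ and the coordinate values of the coefficients, which are computable words in $B$. Thus each branch becomes an explicit finite system over $B$, and the solutions correspond bijectively. A decision procedure for $\Dio(B)$ then decides each branch.

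The main obstacle is bookkeeping: checking that the coordinatewise translation in step three is correct. In particular, the conjugation by $h$ must become an index shift $x\mapsto hx$, and the coefficients' $B$-coordinates must be computable from their words. This is exactly the construction of \cite[Lemma~3.6]{CiobanuHoltRees2020}, which the introduction quotes, so I would cite it for the full details. Alternatively, one can phrase the argument as an e-interpretation of $G$ in $B$: a finite disjoint union of copies of $B^n$, one copy for each $h\in H$, with the multiplication given coordinatewise by permuted products. Note, however, that a finite disjoint union requires disjunctions, which is why I would use the branching argument rather than Lemma~\ref{lem:interpretation-reduction} directly.
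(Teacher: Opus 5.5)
Your proposal is correct and follows the paper's route. The forward direction is the reduction $\Dio(B)\le_P\Dio(B\wr H)$ from Theorem~\ref{thm:main}, and the converse is the finite-extension argument of \cite[Lemma~3.6]{CiobanuHoltRees2020}, which the paper simply cites after observing that $N$ is a finite direct power of $B$. Your branching over $H^k$ followed by coordinatewise splitting correctly spells out that citation. The key point, which you identify, is that conjugation by $H$ only permutes the coordinates of $N$, so the twisted equations over $N$ become ordinary equations over $B$ once the unknowns are renamed as $Y_{i,x}$.
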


\begin{proof}
Put $G=B\wr H$. If $\Dio(G)$ is decidable,
Theorem~\ref{thm:main} gives the decidability of $\Dio(B)$.

Suppose, conversely, that $\Dio(B)$ is decidable. In our case,
\[
G=N\rtimes H,\qquad N=\bigoplus_{h\in H}B_h,
\]
where $B_h\cong B$ is the coordinate subgroup indexed by $h$. Since $H$ is finite, $N$ is a finite direct sum of groups with decidable Diophantine problems. Now the result follows from \cite{CiobanuHoltRees2020},  Lemma 3.6.

\end{proof}

\begin{theorem}\label{cor:wreath-diophantine-reduction}
If $B,H\in\mathcal I_{\mathrm{fg}}$, then
 the Diophantine problem in $B\wr H$ is undecidable.
\end{theorem}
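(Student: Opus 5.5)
The plan is to combine Proposition~\ref{prop:indicable-wreath-retract} with the retract principle (Proposition~\ref{prop:retract-principle}) and the Myasnikov--Romanovskii theorem~\cite{MR} that $\Dio(\Z\wr\Z)$ is undecidable. Since $B,H\in\mathcal I_{\mathrm{fg}}$, both factors are indicable. Proposition~\ref{prop:indicable-wreath-retract} then provides a subgroup $R\leq B\wr H$ that is a retract and is isomorphic to $\Z\wr\Z$. Concretely, choose $b\in B$ and $t\in H$ with images $1$ under epimorphisms $\chi_B:B\to\Z$ and $\chi_H:H\to\Z$. Proposition~\ref{prop:indicable-characterization} makes $C=\langle b\rangle$ and $D=\langle t\rangle$ infinite cyclic retracts, and Theorem~\ref{retract} gives the retract $J(C\wr D)$.

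Next I will check the finiteness hypotheses needed for the algorithmic statement. The group $B\wr H$ is finitely generated because $B$ and $H$ are. The retract $R\cong\Z\wr\Z$ is finitely generated by $J(t)=t$ and $J(b)=b$ (the latter viewed in the coordinate at $1_H$). Proposition~\ref{prop:retract-principle} then yields $\Dio(\Z\wr\Z)\le_P\Dio(B\wr H)$. The reduction is explicit: a coefficient word in the standard generators $a,s$ of $\Z\wr\Z$ is rewritten by substituting $a\mapsto b$ and $s\mapsto t$, where $b$ and $t$ are fixed words over the generators of $B$ and $H$.

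Finally, I will invoke~\cite{MR} (see also~\cite[Section~1]{KM2025}) for the undecidability of $\Dio(\Z\wr\Z)$. By the corollary ``Undecidability ascends,'' $\Dio(B\wr H)$ is undecidable.

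There is no real obstacle here; the substantive work is already done in Theorem~\ref{retract}. The only point needing care is that the isomorphism $R\cong\Z\wr\Z$ be effective on generators, so that the coefficient translation is computable. This holds because $J$ sends the generators of $C\wr D$ to the fixed elements $b$ and $t$ of $B\wr H$.
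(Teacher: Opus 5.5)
Your proposal is correct and follows essentially the same route as the paper: it takes the retract $J(C\wr D)\cong\Z\wr\Z$ from Proposition~\ref{prop:indicable-wreath-retract}, applies the retract principle, and invokes the undecidability of $\Dio(\Z\wr\Z)$ from \cite{MR}. Your added check that coefficients translate effectively via $a\mapsto b$, $s\mapsto t$ is correct; the paper leaves it implicit.
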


\begin{proof}
By Proposition~\ref{prop:indicable-wreath-retract}, the group $B\wr H$
has a retract isomorphic to $\Z\wr\Z$.  Apply
Proposition~\ref{prop:retract-principle} and the known undecidability of
the Diophantine problem in $\Z\wr\Z$~\cite{MR}; see also the
discussion in \cite{KM2025}.
\end{proof}

\begin{corollary}
Let $B,H$ be finitely generated infinite abelian groups.  Then $\Dio(B\wr H)$ is undecidable. 
\end{corollary}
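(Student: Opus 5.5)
This follows directly from Theorem~\ref{cor:wreath-diophantine-reduction}: we only need to check that $B$ and $H$ lie in $\mathcal I_{\mathrm{fg}}$.

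By hypothesis, $B$ and $H$ are finitely generated. A finitely generated abelian group $A$ decomposes as $A\cong\Z^r\oplus T$ with $T$ finite. If $A$ is infinite, then $r\geq1$, since otherwise $A=T$ would be finite. Because $A$ is abelian, it equals its own abelianization. Proposition~\ref{prop:indicable-characterization} then shows that $A$ is indicable. Alternatively, one can see this directly: projecting onto one $\Z$ coordinate gives an epimorphism $A\twoheadrightarrow\Z$. Applying this to $A=B$ and to $A=H$ gives $B,H\in\mathcal I_{\mathrm{fg}}$.

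Theorem~\ref{cor:wreath-diophantine-reduction} now shows that $\Dio(B\wr H)$ is undecidable. For a more explicit route, Proposition~\ref{prop:indicable-wreath-retract} gives a retract of $B\wr H$ isomorphic to $\Z\wr\Z$. The Diophantine problem in $\Z\wr\Z$ is undecidable by~\cite{MR}, and this undecidability ascends to $B\wr H$ by Proposition~\ref{prop:retract-principle}.

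The only point that needs care is that infiniteness is exactly what forces positive torsion-free rank. Torsion in $B$ or $H$ causes no difficulty, because the retractions onto cyclic factors ignore the finite part $T$. I do not expect any real obstacle.
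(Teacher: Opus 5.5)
Your proposal is correct and matches the paper's route: the corollary is stated immediately after Theorem~\ref{cor:wreath-diophantine-reduction} as a direct consequence, since an infinite finitely generated abelian group has positive free rank and is therefore indicable. The retract-to-$\Z\wr\Z$ unpacking you add is exactly how that theorem is proved.
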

It was known for $B,H$  free abelian of finite rank $\geq 1$ \cite{KM2025}.

\begin{corollary}
Let $B,H$ be finitely generated groups.  If $C$ is an abelian retract of
$B$ and $D$ is a retract of $H$, then
\[
\Dio(C\wr D)\leq_P\Dio(B\wr H).
\]
\end{corollary}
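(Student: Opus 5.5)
This follows from Theorem~\ref{retract} combined with the retract principle (Proposition~\ref{prop:retract-principle}). By Theorem~\ref{retract}, applied to the retractions $p:B\to C$ and $q:H\to D$, the group $G=B\wr H$ has a retract $J(C\wr D)$ with retraction $J\circ R$. Since $J$ is injective, it is an isomorphism $C\wr D\cong J(C\wr D)$. The retract principle then gives $\Dio(J(C\wr D))\le_P\Dio(B\wr H)$. What remains is to check that the isomorphism $J$ and the encoding conventions are compatible with polynomial-time reductions.

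The setup needs two checks.

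\begin{enumerate}[label=(\alph*)]
\item $C\wr D$ is finitely generated, so that its Diophantine problem is meaningful under the paper's conventions. $C$ and $D$ are homomorphic images of the finitely generated groups $B$ and $H$ under $p$ and $q$, hence finitely generated, and therefore $C\wr D$ is finitely generated.
\item Fix finite generating sets $S_C$ and $S_D$ of $C$ and $D$, so that $S_C\cup S_D$ generates $C\wr D$. For each $c\in S_C$ and $d\in S_D$, fix once and for all a word over the generators of $B\wr H$ representing $J(c)=\widetilde c$ (the function with value $c$ at $1_H$) and $J(d)=d$, respectively.
\end{enumerate}

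The reduction then proceeds as follows. Given a system $\Sigma$ over $C\wr D$ with coefficients given as words in $S_C\cup S_D$, substitute these fixed words. This is linear time, and yields a system over $J(C\wr D)$ that is equivalent to $\Sigma$ because $J$ is an isomorphism onto its image. Now apply the reduction of Proposition~\ref{prop:retract-principle}, viewing the coefficients as elements of $B\wr H$. Concretely, solvability in $B\wr H$ is equivalent to solvability in $J(C\wr D)$:
\begin{itemize}
\item[] Solutions in the retract are solutions in $B\wr H$, since coefficients lie in the retract.
\item[] Conversely, applying $J\circ R$ to a solution in $B\wr H$ gives a solution in $J(C\wr D)$, since $J\circ R$ is a homomorphism fixing the coefficients.
\end{itemize}

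There is no real obstacle here. The only subtle point is that Proposition~\ref{prop:retract-principle} is stated for a retract given as a subgroup, with its own generators represented by fixed words. This is handled by taking $J(S_C\cup S_D)$ as the generating set of the retract, so the composite reduction stays polynomial (in fact linear) time.
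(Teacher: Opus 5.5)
Your proposal is correct and follows the same route as the paper: finite generation of $C$ and $D$ as homomorphic images of $B$ and $H$, then Theorem~\ref{retract} combined with the retract principle (Proposition~\ref{prop:retract-principle}). Your extra bookkeeping, which transports the generating set $S_C\cup S_D$ through $J$ and fixes words for $J(c)=\widetilde c$ and $J(d)=d$, only makes explicit the encoding step that the paper leaves implicit.
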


\begin{proof}
The groups $C,D$ are finitely generated, being images of $B,H$.
The conclusion follows from Theorem~\ref{retract} and
Proposition~\ref{prop:retract-principle}.
\end{proof}

\begin{corollary}\label{semi}
Let $C$ be abelian.  Suppose that the semidirect products
$B=K_1\rtimes C$ and $H=K_2\rtimes D$ are finitely generated, where
$K_1,K_2$ are the normal factors.  Then
\[
\Dio(C\wr D)
\leq_P\Dio\bigl((K_1\rtimes C)\wr(K_2\rtimes D)\bigr).
\]
\end{corollary}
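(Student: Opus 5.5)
The plan is to reduce Corollary~\ref{semi} to the preceding corollary (the one giving $\Dio(C\wr D)\leq_P\Dio(B\wr H)$ for an abelian retract $C$ of $B$ and a retract $D$ of $H$). For that we only need two facts: in a semidirect product the complement is a retract, and the factors $B$ and $H$ are finitely generated.

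First, we show that $C$ is a retract of $B=K_1\rtimes C$. By the discussion after the definition of retracts, the projection $K_1\rtimes C\to C$, $kc\mapsto c$, is a homomorphism because $K_1$ is normal with $K_1\cap C=1$. It restricts to the identity on $C$, so it is a retraction; this is the same observation used in Corollary~\ref{co:semidirect}. Likewise $D$ is a retract of $H=K_2\rtimes D$ via $kd\mapsto d$. By hypothesis $C$ is abelian, and $B,H$ are finitely generated. Theorem~\ref{retract} then gives a retract of $B\wr H$ canonically isomorphic to $C\wr D$.

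Second, we apply the previous corollary, or equivalently Theorem~\ref{retract} followed by Proposition~\ref{prop:retract-principle}, to $G=B\wr H$. This requires $G$ to be finitely generated, which holds since $B$ and $H$ are. The complements $C$ and $D$ are homomorphic images of $B$ and $H$, so they are finitely generated too, and the complexity conventions apply to $\Dio(C\wr D)$. The conclusion $\Dio(C\wr D)\leq_P\Dio\bigl((K_1\rtimes C)\wr(K_2\rtimes D)\bigr)$ follows.

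I expect no real obstacle. The only point needing care is the encoding convention: coefficients in $C\wr D$ are given as words over a fixed finite generating set of $C\wr D$, and each generator must be rewritten as a fixed word over the generators of $B\wr H$ through the embedding $J$ from the proof of Theorem~\ref{retract}. This is a linear-time substitution, exactly as in the proof of Proposition~\ref{prop:retract-principle}, so the reduction is polynomial time.
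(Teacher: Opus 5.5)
Your proposal is correct and follows the paper's proof: you observe that the canonical projections $B\to C$ and $H\to D$ onto the complements are retractions with $C$ abelian, and then apply the preceding corollary (Theorem~\ref{retract} combined with the retract principle). Your extra remarks on the finite generation of $C$ and $D$, and on the linear-time rewriting of coefficients through $J$, are bookkeeping that the preceding corollary already handles.
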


\begin{proof}
The canonical projections onto the complementary factors are retractions
$p:B\to C$ and $q:H\to D$.  Apply the preceding corollary.
\end{proof}

\section{Wreath products with abelian base groups}
\label{sec:abelian-base}

 Throughout this
section, we use the conventions of Section~\ref{ssec:wreath-conventions}:
\[
G=B\wr H=N\rtimes H,\qquad N=B^{(H)},\qquad
f^h(s)=f(hs),\qquad
(h_1f_1)(h_2f_2)=h_1h_2f_1^{h_2}f_2.
\]

\subsection{Free abelian bases and lower central quotients}

Let $B$ be a free abelian group with basis $b_1,\ldots,b_n$,
where $n\geq1$, and $H$ an arbitrary finitely generated group with a free abelian subgroup $A\leq H$  with basis
$a_1,\ldots,a_k$, where $k\geq1$.  

As usual, each $b_j$ denotes the function supported at
$1\in H$ with value $b_j$ there.  The group $N$ is a free right
$\mathbb ZH$-module with basis $b_1,\ldots,b_n$, written multiplicatively.
For example, if $p=\sum_h m_hh\in\mathbb ZH$, then
\[
b_j^p=\prod_h (b_j^{m_h})^h.
\]
The factors commute because $N$ is abelian, and the representation
$f=\prod_{j=1}^n b_j^{p_j}$ is unique.  Indeed, $b_j^h$ is supported at
$h^{-1}$, so this statement follows by reading the exponent of $b_j$ at
each coordinate of $H$.

  Set
\[
R=\mathbb ZA=\mathbb Z[a_1^{\pm1},\ldots,a_k^{\pm1}],\qquad
M=\{f\in N:\supp(f)\subseteq A\},\qquad T=\langle A,B\rangle.
\]
Thus $M$ is the free $R$-module with basis $b_1,\ldots,b_n$; it is
generally a proper submodule of $N$.  Write
\[
\epsilon:R\longrightarrow\mathbb Z,\qquad
\epsilon\left(\sum_{a\in A}m_aa\right)=\sum_{a\in A}m_a,
\qquad \Delta=\ker\epsilon.
\]
The augmentation ideal is
$\Delta=(a_1-1,\ldots,a_k-1)$.  This follows from
$uv-1=(u-1)v+(v-1)$ and
$a_i^{-1}-1=-a_i^{-1}(a_i-1)$.
For an ideal $I\unlhd R$, denote by $M^I$ the subgroup generated by
the elements $f^p$ with $f\in M$ and $p\in I$.  Equivalently,
\[
M^I=\left\{\prod_{j=1}^n b_j^{p_j}:p_1,\ldots,p_n\in I\right\}.
\]

\begin{lemma}\label{computationFreeAbelian}
With this notation,
\begin{enumerate}[label=\textup{(\roman*)}]
\item $T=M\rtimes A$, and restriction of functions from $H$ to $A$
identifies $T$ with $B\wr A$.
\item For every $i\geq2$,
\[
\gamma_i(T)=M^{\Delta^{i-1}}.
\]
\item For every $i\geq2$,
\[
T/\gamma_i(T)\cong (M/M^{\Delta^{i-1}})\rtimes A.
\]
\end{enumerate}
\end{lemma}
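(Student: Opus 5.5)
For \textup{(i)}, I will first check that $M$ is normalized by $A$ and $M\cap A=1$. If $f\in M$ and $a\in A$, then $\supp(f^a)=a^{-1}\supp(f)\subseteq A$, so $f^a\in M$. The intersection $M\cap A$ lies in $N\cap H=1$. Hence $MA=M\rtimes A$ is a subgroup. It contains $A$ and $B$, and it is generated by them, since $M$ is generated by the conjugates $b_j^a$ with $a\in A$. Thus $T=M\rtimes A$. Restricting functions $H\to B$ supported in $A$ to functions $A\to B$ is a bijection $M\to B^{(A)}$ that commutes with the $A$-action, because $f^a(x)=f(ax)$ and $ax\in A$ for $x\in A$. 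Hence $af\mapsto a\,(f|_A)$ is an isomorphism $T\to B\wr A$; this is exactly the map $J$ of Theorem~\ref{retract} read backwards.

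For \textup{(ii)}, I will argue by induction on $i$, using two commutator identities in $T$. For $f\in M$ and $a\in A$ we have $[f,a]=f^{-1}f^a=f^{a-1}$. Since $M$ is abelian, for $ag$ and $bh$ in $T$ the commutator in the base equals $g^{\,\cdot}$-expressions of the form $f^{(a-1)}$-type terms. Concretely, I will show $\gamma_2(T)=[T,T]$ is generated modulo nothing by the elements $f^{a-1}$.

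\emph{Base case $i=2$.} Since $T/M\cong A$ is abelian, $\gamma_2(T)\subseteq M$. The quotient $T/M^{\Delta}$ is abelian: modulo $M^\Delta$, the group $A$ acts trivially on $M$, and $A$ is abelian. Hence $\gamma_2(T)\subseteq M^\Delta$. Conversely, $b_j^{(a_i-1)p}=[b_j^p,a_i]$ lies in $\gamma_2$, and these elements generate $M^\Delta$ because $\Delta=\sum_i(a_i-1)R$. This gives $\gamma_2(T)=M^\Delta$.

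\emph{Inductive step.} Assume $\gamma_i(T)=M^{\Delta^{i-1}}$. Then $\gamma_{i+1}(T)=[M^{\Delta^{i-1}},T]$. Since $M$ is abelian and $[g,af]=[g,f][g,a]^f$-type expansion reduces to $[g,a]=g^{a-1}$ for $g\in M$, this subgroup is generated by the elements $g^{a-1}$ with $g\in M^{\Delta^{i-1}}$ and $a\in A$. These span exactly $M^{\Delta^{i-1}\Delta}=M^{\Delta^i}$, again using $\Delta=\sum(a_i-1)R$ and that $M^{\Delta^{i-1}}$ is an $R$-submodule.

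For \textup{(iii)}, the subgroup $M^{\Delta^{i-1}}$ is normal in $T$ and contained in $M$. Quotienting the semidirect product $M\rtimes A$ by a normal subgroup of $M$ gives $(M/M^{\Delta^{i-1}})\rtimes A$. The only mildly delicate point is the commutator bookkeeping in \textup{(ii)}, namely checking that commutators with general elements $af$, rather than only with generators, stay inside $M^{\Delta^{i}}$. This holds because the normal subgroup generated by commutators with generators already contains all commutators.
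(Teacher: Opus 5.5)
Your proposal is correct and follows essentially the same route as the paper: the same decomposition $T=M\rtimes A$ in (i), the identities $[f,a]=f^{a-1}$ and $[g,af]=[g,a]$ (the paper phrases the latter as $[M^{\Delta^r},T]=[M^{\Delta^r},A]$) together with $\Delta=\sum_j(a_j-1)R$ in the induction for (ii), and the quotient of $M\rtimes A$ by an $A$-invariant subgroup of $M$ in (iii). The only small variation is that you obtain $\gamma_2(T)\le M^\Delta$ from the fact that $T/M^\Delta$ is abelian, whereas the paper uses its explicit formula $[\alpha f,\beta g]=f^{\beta-1}g^{1-\alpha}$; your route is legitimate because $M^\Delta$ is an $A$-invariant subgroup of the abelian group $M$ and hence normal in $T$, and you can simply delete the garbled sentence just before your base case.
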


\begin{proof}
Conjugating $B$ by elements of $A$ gives exactly the copies of $B$
supported at the points of $A$.  These copies generate $M$.
The subgroup $M$ is invariant under $A$, and $M\cap A=1$.
It follows that $\langle A,B\rangle=M\rtimes A$.  The action on $M$
is the same regular action used to define $B\wr A$, proving the first
assertion.

For the commutator convention $[x,y]=x^{-1}y^{-1}xy$, the multiplication
rule gives, for $\alpha,\beta\in A$ and $f,g\in M$,
\begin{equation}\label{eq:mixedcommutator}
[\alpha f,\beta g]=f^{-1}g^{-\alpha}f^\beta g
                  =f^{\beta-1}g^{1-\alpha}.
\end{equation}
Here $g^{-\alpha}$ means $(g^{-1})^\alpha$.
In particular, if $f=\prod_jb_j^{p_j}$ and
$g=\prod_jb_j^{q_j}$, then
\begin{equation}\label{commutator_in_T}
[\alpha f,\beta g]
=\prod_{j=1}^n b_j^{\,p_j(\beta-1)+q_j(1-\alpha)}.
\end{equation}
It follows that $[T,T]\leq M^\Delta$.  Conversely,
$[f,a_j]=f^{a_j-1}$ for every $f\in M$, and the elements
$a_j-1$ generate $\Delta$ as an ideal.  Hence
$[T,T]=M^\Delta$.

More generally, since $M$ is abelian and $A$ preserves every
$M^{\Delta^r}$, we have
\[
[M^{\Delta^r},T]=[M^{\Delta^r},A]=M^{\Delta^{r+1}}
\qquad(r\geq0).
\]
The inclusion from left to right follows from
$[u,\alpha]=u^{\alpha-1}$; the reverse inclusion follows because
$\Delta^{r+1}$ is generated by the products $p(a_j-1)$ with
$p\in\Delta^r$.  Induction proves the second assertion.  The associated
iterated-commutator identity is
\begin{equation}\label{commutatorAssertion}
[f,a_{j_1},\ldots,a_{j_r}]
=f^{(a_{j_1}-1)\cdots(a_{j_r}-1)}
\qquad(f\in M, r\geq1).
\end{equation}
All commutators here are left-normed.

Finally, $M^{\Delta^{i-1}}$ is $A$-invariant, so the map
\[
\alpha f\longmapsto\alpha\bigl(fM^{\Delta^{i-1}}\bigr)
\]
is a surjective homomorphism from $T$ to
$(M/M^{\Delta^{i-1}})\rtimes A$.  Its kernel is precisely
$M^{\Delta^{i-1}}=\gamma_i(T)$, proving the third assertion.
\end{proof}

\begin{lemma}\label{lem:lower-central-explicit}
For every fixed $i\geq2$, the subgroup $\gamma_i(T)$ is e-definable
in $T$.
\end{lemma}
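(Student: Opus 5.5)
We will write down an explicit Diophantine formula for $\gamma_i(T)=M^{\Delta^{i-1}}$, using the description in Lemma~\ref{computationFreeAbelian}(ii). Alternatively, $T\cong B\wr A$ is a finitely generated metabelian group, hence verbally elliptic, and the formula \eqref{eq:verbal1} already gives e-definability. We prefer an explicit formula with bounded width, because it makes the number of witnesses visible.

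The first step is to make $\Delta^{i-1}$ concrete. It is generated as an ideal by the finitely many monomials $(a_{j_1}-1)\cdots(a_{j_{i-1}}-1)$, where $(j_1,\ldots,j_{i-1})$ ranges over $\{1,\ldots,k\}^{i-1}$. Hence every element of $M^{\Delta^{i-1}}$ has the form
\[
\prod_{(j_1,\ldots,j_{i-1})}\ \prod_{l=1}^n b_l^{\,r_{l,\mathbf j}(a_{j_1}-1)\cdots(a_{j_{i-1}}-1)},\qquad r_{l,\mathbf j}\in R.
\]
Each inner product $f_{\mathbf j}=\prod_l b_l^{r_{l,\mathbf j}}$ is an arbitrary element of $M$. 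By \eqref{commutatorAssertion}, the corresponding factor equals the left-normed commutator $[f_{\mathbf j},a_{j_1},\ldots,a_{j_{i-1}}]$. Therefore
\[
\gamma_i(T)=\Bigl\{\prod_{\mathbf j\in\{1,\ldots,k\}^{i-1}}[f_{\mathbf j},a_{j_1},\ldots,a_{j_{i-1}}]\;:\;f_{\mathbf j}\in M\Bigr\}.
\]
This has the form of a Diophantine formula, with $k^{i-1}$ witnesses and the $a_j$ as coefficients, provided that the variables $f_{\mathbf j}$ can be restricted to $M$ by equations.

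The second step, which I expect to be the only real obstacle, is to define $M$ in $T$ by equations. We plan to exploit that $[x,y]\in M$ for all $x,y\in T$. One could try to define $M$ as a centralizer, $M=C_T(b_1)$, as in Proposition~\ref{prop1}(iii) applied to $B\wr A$. Since $B\ne1$ is abelian, this gives exactly $M$, via the identification of Lemma~\ref{computationFreeAbelian}(i). So we adjoin the equations $[X_{\mathbf j},b_1]=1$.

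We must still check that arbitrary $x\in T$ with $x\in C_T(b_1)$ is allowed. The centralizer equation pins $x$ down to $M$, which is what we need. If one wanted to avoid the centralizer, one could instead use commutators $[y,z]$ as the witnesses. Since $[T,T]=M^\Delta$, this only reaches $M^{\Delta}$, which loses one power, so the centralizer route is cleaner.

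The final formula is therefore
\[
x\in\gamma_i(T)\iff \exists (X_{\mathbf j})\ \Bigl(x=\prod_{\mathbf j}[X_{\mathbf j},a_{j_1},\ldots,a_{j_{i-1}}]\ \wedge\ \bigwedge_{\mathbf j}[X_{\mathbf j},b_1]=1\Bigr).
\]
Both inclusions follow from the two displays above: every product of this form lies in $M^{\Delta^{i-1}}$ by \eqref{commutatorAssertion}, and every element of $M^{\Delta^{i-1}}$ arises this way. This formula is an existentially quantified conjunction of equations with coefficients $a_j,b_1\in T$, so $\gamma_i(T)$ is e-definable in $T$.
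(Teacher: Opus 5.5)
Your argument is correct, but your main route differs from the paper's. The paper's proof is the remark you make in passing: $[T,T]=M^{\Delta}$ is abelian, so $T$ is a finitely generated metabelian group. Such groups are verbally elliptic, so \eqref{eq:verbal1} defines each $\gamma_i(T)$.

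Your version instead uses the explicit module structure from Lemma~\ref{computationFreeAbelian}. The ideal $\Delta^{i-1}$ is generated by the $k^{i-1}$ products $(a_{j_1}-1)\cdots(a_{j_{i-1}}-1)$, and \eqref{commutatorAssertion} turns each power $f^{(a_{j_1}-1)\cdots(a_{j_{i-1}}-1)}$ into an iterated commutator with constants $a_j$. This gives a self-contained formula with an explicit number of witnesses. It avoids the cited theorem on finite verbal width in metabelian groups, which supplies only an unspecified width. The price is that your formula uses constants and relies on the special structure of $T$. The paper's formula is constant-free and works for any finitely generated metabelian group.

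One simplification: the step you flag as the real obstacle is unnecessary. Every left-normed commutator $[X,a_{j_1},\ldots,a_{j_{i-1}}]$ lies in $\gamma_i(T)$ for arbitrary $X\in T$. So the defined set is contained in $\gamma_i(T)$ without any restriction on the $X_{\mathbf j}$. The reverse inclusion only needs witnesses that happen to lie in $M$, and your first step supplies them. The equations $[X_{\mathbf j},b_1]=1$ are correct, since $C_T(b_1)=M$, but they are redundant. Indeed, because $A$ is abelian, $[\alpha f,a_j]=[f,a_j]$ for $\alpha\in A$ and $f\in M$, so the $A$-component of a witness has no effect anyway.
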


\begin{proof}

By Lemma \ref{computationFreeAbelian} bullet (ii), $[T,T]$ is the abelian group $M^{\Delta}$. Thus, $T$ is a finitely generated metabelian group, which is verbally elliptic (see section \ref{sse:Verbal}). Consequently, all of its subgroups $\gamma_i(T)$ are e-definable in $T$ for $i \geq 2$.
\end{proof}

\begin{lemma}\label{notVA}
The finitely generated nilpotent group $Q=T/\gamma_3(T)$ is not
virtually abelian.
\end{lemma}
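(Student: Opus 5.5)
By Lemma~\ref{computationFreeAbelian}(iii), $Q\cong (M/M^{\Delta^2})\rtimes A$. The plan is to find two elements of $Q$ whose commutator has infinite order, and then show that this is incompatible with $Q$ being virtually abelian. Since $Q$ is nilpotent of class at most $2$, I will argue by contradiction.

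Suppose $Q$ has an abelian subgroup $Q_0$ of finite index $m$. Then $x^m$ and $y^m$ lie in $Q_0$ for all $x,y\in Q$, so they commute. In a group of class $2$ we have the bilinearity identity $[x^m,y^m]=[x,y]^{m^2}$, because commutators are central there. Hence every commutator of $Q$ has order dividing $m^2$, and it suffices to exhibit one commutator of infinite order.

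Take $x=b_1$ (the image of $b_1\in M$) and $y=a_1$. By \eqref{commutatorAssertion},
\[
[b_1,a_1]=b_1^{\,a_1-1},
\]
so $[b_1,a_1]^{N}=b_1^{N(a_1-1)}$ for every integer $N$. Its image in $Q$ is trivial exactly when $N(a_1-1)\in\Delta^2$, using the uniqueness of the representation $\prod_j b_j^{p_j}$ and the description of $M^{\Delta^2}$ as the set of such products with all $p_j\in\Delta^2$.

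The main step is to show that $N(a_1-1)\notin\Delta^2$ for $N\ne0$. For this I use the standard isomorphism $\Delta/\Delta^2\cong A\cong\Z^k$, given by the homomorphism $\Delta\to A$, $a-1\mapsto a$, which kills $\Delta^2$. Concretely, define $\theta:R\to\Z^k$ by $\theta(\sum m_a a)=\sum m_a\log(a)$, where $\log(a)$ is the exponent vector of $a$ in the basis $a_1,\ldots,a_k$. On $\Delta$, $\theta$ is additive. It vanishes on $\Delta^2$, since $\Delta^2$ is additively generated by elements $(u-1)(v-1)r$ with $u,v,r\in A$, and
\[
\theta\bigl((u-1)(v-1)r\bigr)=\log(uvr)-\log(ur)-\log(vr)+\log(r)=0.
\]
(On $\Delta$ the map $\theta$ differs from the naive formula only by the constant term, which is harmless because the coefficients sum to zero.) Finally $\theta(N(a_1-1))=Ne_1\neq0$, so $N(a_1-1)\notin\Delta^2$.

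Therefore $[b_1,a_1]$ has infinite order in $Q$, contradicting the bound $m^2$. Hence $Q$ is not virtually abelian. The only delicate point is the verification that $\theta$ kills $\Delta^2$; everything else is routine.
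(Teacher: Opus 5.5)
Your argument is correct and follows the paper's route. Both show that $[\bar b_1,\bar a_1]=b_1^{a_1-1}M^{\Delta^2}$ has infinite order by proving $N(a_1-1)\notin\Delta^2$ for $N\neq 0$. Both then use centrality of commutators in class two, $[x^s,y^r]=[x,y]^{sr}$, to rule out a finite-index abelian subgroup.

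The only difference is how you verify $N(a_1-1)\notin\Delta^2$. The paper applies the ring homomorphism $R\to\Z[u^{\pm1}]$ given by $a_1\mapsto u$ and $a_i\mapsto 1$ for $i\geq 2$, then cancels $u-1$ in an integral domain. Your additive map $\theta$, which is the standard identification $\Delta/\Delta^2\cong A$, is an equally elementary alternative.

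One small repair is needed. The step ``$x^m\in Q_0$ for all $x$'' is automatic only when $Q_0$ is normal; in general one only gets $x^j\in Q_0$ for some $1\le j\le m$. To fix this, pass to the normal core of $Q_0$, or replace $m$ by $m!$, or argue as the paper does, using only that $\bar a_1$ and $\bar b_1$ each have some positive power in $Q_0$. The rest of your argument is unchanged.
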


\begin{proof}
Recall that
\[
R=\mathbb ZA=\mathbb Z[a_1^{\pm1},\ldots,a_k^{\pm1}],
\qquad
\Delta=(a_1-1,\ldots,a_k-1),
\]
and that $M$ is a free $R$-module with basis $b_1,\ldots,b_n$.
By the preceding lower-central-series calculation,
\[
\gamma_2(T)=M^\Delta,
\qquad
\gamma_3(T)=M^{\Delta^2}.
\]
By definition  $Q$ has nilpotency class at most two.

Let $\bar a_1,\bar b_1$ denote the images of $a_1,b_1$ in $Q$.
We first show that
\[
c=[\bar b_1,\bar a_1]
=b_1^{\,a_1-1}M^{\Delta^2}
\]
has infinite order.

Suppose that $c^m=1$ for some $m\in\mathbb Z$. Since $M$ is free
over $R$ with basis $b_1,\ldots,b_n$, this implies
\[
m(a_1-1)\in\Delta^2.
\]
Consider the ring homomorphism
\[
\sigma:R\longrightarrow\mathbb Z[u,u^{-1}]
\]
defined by
\[
\sigma(a_1)=u,\qquad
\sigma(a_i)=1\quad(2\leq i\leq k).
\]
It sends $\Delta$ onto the ideal $(u-1)$. Therefore
\[
m(u-1)\in(u-1)^2\mathbb Z[u,u^{-1}],
\]
so there exists $h(u)\in\mathbb Z[u,u^{-1}]$ such that
\[
m(u-1)=(u-1)^2h(u).
\]
The Laurent polynomial ring $\mathbb Z[u,u^{-1}]$ is an integral
domain. Cancelling $u-1$ and then evaluating at $u=1$, we obtain
\[
m=(u-1)h(u),
\qquad\text{hence}\qquad m=0.
\]
Thus $c$ has infinite order.

Now suppose, towards a contradiction, that $Q$ contains an abelian
subgroup $U$ of finite index. Every element of $Q$ has a positive
power in $U$. Hence there exist positive integers $r,s$ such that
\[
\bar a_1^{\,r}\in U,
\qquad
\bar b_1^{\,s}\in U.
\]
Since $U$ is abelian,
\[
[\bar b_1^{\,s},\bar a_1^{\,r}]=1.
\]
On the other hand, commutators are central in $Q$, so
\[
[\bar b_1^{\,s},\bar a_1^{\,r}]
=
[\bar b_1,\bar a_1]^{sr}
=
c^{sr}.
\]
This is nontrivial because $c$ has infinite order and $sr>0$, a
contradiction.
Therefore,  $Q$ has no abelian subgroup of finite index and is not
virtually abelian.
\end{proof}

The preceding two lemmas give an alternative proof of undecidability
for $T\cong B\wr A$ when $B$ and $A$ are nontrivial free abelian groups
of finite rank: apply Theorem~\ref{thm:nilpotent-criterion} to the
e-definable subgroup $\gamma_3(T)$ and the non-virtually abelian
nilpotent quotient $T/\gamma_3(T)$.

\subsection{Transferring undecidability to the ambient wreath product}

We now allow torsion in both abelian groups. Let $B$ be an infinite
finitely generated abelian group, let $H$ be finitely generated, and
let $A\leq H$ be an infinite finitely generated abelian subgroup that
is \emph{e-definable in $H$}.

Choose generating sets
$B=\langle b_1,\ldots,b_n\rangle$ and
$A=\langle a_1,\ldots,a_k\rangle$ with $b_1\ne1$ and $a_1$ of infinite
order. Put $G=B\wr H$, $N=B^{(H)}$,
\[
M=\{f\in N:\operatorname{supp}(f)\subseteq A\},
\qquad T=\langle A,B\rangle.
\]
Here $B$ is embedded at the identity coordinate, as in
Section~\ref{ssec:wreath-conventions}. Conjugating this copy of $B$
by $A$ generates $M$, so $T=M\rtimes A\cong B\wr A$.
By Proposition \ref{prop1}  $C_G(b_1)=N$. 

Fix an e-defining formula for $A$ in $H$,
\[
\phi(x)=\exists\bar y\bigwedge_{\ell=1}^q
w_\ell(x,\bar y,\bar h)=1,
\]
where $\bar h$ is a fixed tuple of coefficients in $H$.
Let $\pi:G\to H$ be the canonical retraction $\pi(hf)=h$ and put
$K=\pi^{-1}(A)=N\rtimes A$.

\begin{lemma}\label{definableInG}
Both $K$ and the embedded subgroup $A$ are e-definable in $G$.
\end{lemma}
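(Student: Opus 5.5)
We have to show that $K=N\rtimes A$ and the canonical copy of $A$ in $G$ are e-definable. The plan is to first e-define $N$ and then lift the formula $\phi$ along the retraction $\pi$.

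\emph{Step 1: $N$ is e-definable.} By Proposition~\ref{prop1}\textup{(iii)} we have $C_G(b_1)=N$, since $B$ is abelian and $b_1\neq1$. Thus $N$ is defined by the single equation $[z,b_1]=1$ with coefficient $b_1$.

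\emph{Step 2: $K$ is e-definable.} Here the idea is to translate $\phi$ from $H$ to $G$, replacing exact equalities in $H$ by equalities modulo $N$. Consider the formula
\[
\Psi(x)=\exists\bar y\,\exists z_1,\ldots,z_q\ \bigwedge_{\ell=1}^q\Bigl(w_\ell(x,\bar y,\bar h)=z_\ell\ \wedge\ [z_\ell,b_1]=1\Bigr),
\]
where $\bar h$ is viewed in the canonical copy $H\le G$. This is a conjunction of equations in $G$.

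For the forward direction, suppose $x\in K$. Then $\pi(x)\in A$, so there are $\bar y_0\in H$ with $w_\ell(\pi(x),\bar y_0,\bar h)=1$ in $H$. Take $\bar y=\bar y_0$, embedded in $G$. Applying $\pi$ gives $\pi(w_\ell(x,\bar y_0,\bar h))=w_\ell(\pi(x),\bar y_0,\bar h)=1$, because $\pi$ is a homomorphism fixing $H$ pointwise. Hence $w_\ell(x,\bar y_0,\bar h)\in\ker\pi=N$, so the element $z_\ell=w_\ell(x,\bar y_0,\bar h)$ satisfies $[z_\ell,b_1]=1$.

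For the converse, suppose $\Psi(x)$ holds with witnesses $\bar y,\bar z$. Apply $\pi$: since each $z_\ell\in N$, we get $w_\ell(\pi(x),\pi(\bar y),\bar h)=1$ in $H$. Thus $\phi(\pi(x))$ holds, so $\pi(x)\in A$ and $x\in K$.

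\emph{Step 3: the embedded $A$ is e-definable.} Observe that $A=K\cap H$, and $H=\ker(\ldots)$ is not obviously definable. Instead, I would try to define $A$ directly as
\[
\{x\in G:\ \phi(x)\text{ holds in }G\text{ with witnesses }\bar y\text{ constrained so that each }w_\ell\text{ is exactly }1\}.
\]
This naive approach fails, because witnesses in $G$ may lie outside $H$.

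The better route is to use centralizers. The subgroup $A$ is abelian and contains $a_1$ of infinite order. I would attempt $A=K\cap C_G(A)$, which is a finite conjunction $[x,a_j]=1$ for $j=1,\ldots,k$ together with $\Psi(x)$.

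To check this, write $x=\alpha f$ with $\alpha\in A$ and $f\in N$. Since $\alpha$ commutes with each $a_j$, commuting with $a_j$ reduces to $f^{a_j}=f$. A finitely supported $f$ invariant under translation by the infinite-order element $a_1$ must be trivial, because its support is a finite set invariant under the infinite cyclic group $\langle a_1\rangle$ acting freely by left multiplication on $H$. Hence $f=1$ and $x=\alpha\in A$. Conversely, elements of $A$ satisfy all of these equations.

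The main subtlety is Step~3, specifically the argument that support invariance forces $f=1$. It uses only that $a_1$ has infinite order, and it is precisely where the hypothesis that $A$ is infinite, via the choice of $a_1$, enters.
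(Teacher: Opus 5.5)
Your proof is correct and is essentially the paper's argument: you e-define $N$ as $C_G(b_1)$, obtain $K$ by requiring each $w_\ell(x,\bar y,\bar h)$ to commute with $b_1$ (your auxiliary variables $z_\ell$ only name these words), and cut $A$ out of $K$ by commutation with the infinite-order element $a_1$, where finite support forces the base component to be trivial. The extra equations $[x,a_j]=1$ for $j\geq 2$ and the discarded ``naive'' attempt in Step~3 are unnecessary but harmless.
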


\begin{proof}
The equality $C_G(b_1)=N$ allows us to translate equations in $G/N$
into equations in $G$.  Specifically, define
\[
\kappa(x)=\exists\bar y\bigwedge_{\ell=1}^q
[w_\ell(x,\bar y,\bar h),b_1]=1.
\]
We use the elements $\bar h\in H\leq G$ as coefficients.
For each word $w_\ell$, the equation $[w_\ell(x,\bar y,\bar h),b_1]=1$ 
precisely states that $w_\ell(x,\bar y,\bar h)\in N$, or equivalently
\[
w_\ell(\pi(x),\pi(\bar y),\bar h)=1\quad\text{in }H.
\]
Every tuple in $H$ has a lift to $G$.  Hence
$G\models\kappa(x)$ if and only if $H\models\phi(\pi(x))$,
which is equivalent to $x\in K$.

The formula
\[
\phi_G(x)=\kappa(x)\land[x,a_1]=1
\]
defines $A$ itself.  To see this, write $x=\alpha f\in K$ with
$\alpha\in A$ and $f\in N$.  Since $A$ is abelian,
$[x,a_1]=[f,a_1]$.  Thus $[x,a_1]=1$ if and only if
$f^{a_1}=f$.  The element $a_1$ has infinite order, so every orbit
$\{a_1^rs:r\in\mathbb Z\}$ is infinite.  Finite support forces
$f=1$.  Conversely, every element of $A$ satisfies both conjuncts.
\end{proof}

\begin{lemma}\label{solveTinK}
The subgroup $T=M\rtimes A\cong B\wr A$ is a retract of
$K=N\rtimes A$.
Consequently, every finite system of equations with coefficients in
$T$ has a solution in $K$ if and only if it has a solution in $T$.
\end{lemma}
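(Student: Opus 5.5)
We need a retraction $K=N\rtimes A\to T=M\rtimes A$, and it should be the identity on $T$. The plan is to fold coordinates along the cosets of $A$, as in Theorem~\ref{retract}, but without any retraction of $H$ onto $A$. For that reason Theorem~\ref{retract} cannot be quoted directly, since $A$ need not be a retract of $H$.

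First choose a set $S$ of representatives of the right cosets $As$, $s\in H$, with $1\in S$. Every $x\in H$ then factors uniquely as $x=\theta(x)s$ with $\theta(x)\in A$ and $s\in S$. For $\alpha\in A$ we have $\theta(\alpha x)=\alpha\theta(x)$, because left multiplication by $\alpha$ preserves right cosets of $A$. For $f\in N$ define
\[
P(f)(z)=\prod_{x\in H,\ \theta(x)=z}f(x),\qquad z\in A.
\]
This is a finite product in the abelian group $B$, so $P:N\to M$ is a homomorphism, with $M$ identified with $B^{(A)}$.

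Next check equivariance for $\alpha\in A$. Substituting $y=\alpha x$ gives
\[
P(f^\alpha)(z)=\prod_{\theta(x)=z}f(\alpha x)=\prod_{\theta(y)=\alpha z}f(y)=P(f)^\alpha(z).
\]
So $\rho(\alpha f)=\alpha P(f)$ is a homomorphism $K\to T$, by the same normal-form computation as in the proof of Theorem~\ref{retract}. If $f\in M$, then $\supp f\subseteq A$, and on $A$ the map $\theta$ is the identity (since $1\in S$). Hence $P(f)=f$, and $\rho$ restricts to the identity on $T$.

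The consequence then follows exactly as in Proposition~\ref{prop:retract-principle}. A solution in $T$ is already a solution in $K$. Conversely, applying $\rho$ to a solution in $K$ gives a solution in $T$, because $\rho$ is a homomorphism fixing the coefficients. No finite generation is needed for this part.

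The only delicate point is the equivariance of $\theta$. It requires right cosets $Ax$ rather than left cosets, which matches the convention $f^h(x)=f(hx)$. One must also use $A\cap S=\{1\}$ to get $P(f)=f$ on $M$.
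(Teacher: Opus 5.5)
Your proof is correct, but it uses a different retraction from the one in the paper.

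The paper takes $\lambda\colon N\to M$ to be plain restriction to $A$. It keeps the coordinates indexed by $A$ and kills all the others, and the only equivariance needed is $\alpha s\in A\iff s\in A$ for $\alpha\in A$. You instead choose a right transversal $S\ni 1$ and fold every coset $As$ onto $A$ through the $A$-equivariant set map $\theta$. This is the construction of Theorem~\ref{retract}, with the homomorphism $q$ replaced by a map that is only equivariant under the acting subgroup.

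One way to see both maps together: the decomposition $H=\bigsqcup_{s\in S}As$ identifies $K$ with $B^{(S)}\wr A$, and $T$ with the copy of $B\wr A$ sitting on the coordinate $s=1$. The paper's $\lambda$ corresponds to projecting $B^{(S)}$ onto that coordinate. Your $P$ corresponds to multiplying all the coordinates together.

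The paper's choice is simpler: it needs no transversal and no check that $S\cap A=\{1\}$. It also never uses commutativity of $B$, so the lemma holds verbatim for arbitrary $B$ and any subgroup $A\le H$. Your folding genuinely needs $B$ abelian, which is available in this section. What it buys is mainly conceptual: it shows the folding argument of Theorem~\ref{retract} survives when $A$ is not a retract of $H$.

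The step from the retraction to the statement about systems of equations is identical in both proofs.
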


\begin{proof}
Define $\lambda:N\to M$ by
\[
\lambda(f)(s)=
\begin{cases}
f(s),&s\in A,\\
1,&s\notin A.
\end{cases}
\]
This is a homomorphism for pointwise multiplication and is the
identity on $M$.  For $\alpha\in A$, the equivalence
$\alpha s\in A\iff s\in A$ gives
\[
\lambda(f^\alpha)=\lambda(f)^\alpha.
\]
Therefore, the map
\[
\rho:K\longrightarrow T,\qquad \rho(\alpha f)=\alpha\lambda(f)
\]
is a homomorphism: 
\[
\rho((\alpha f)(\beta g))
 =\alpha\beta\lambda(f^\beta g)
  =\alpha\beta\lambda(f)^\beta\lambda(g)
 =\rho(\alpha f)\rho(\beta g).
\]
It fixes every element of $T$, so it is a retraction.
A solution in $K$ of a system with coefficients in $T$ maps under
$\rho$ to a solution in $T$, because $\rho$ fixes the coefficients.
Conversely, a solution in $T$ is already a solution in $K$.
\end{proof}

We also record a simple fact used in the examples below.

\begin{lemma} \label{le:e-def-abelian}
Let $A\cong\Z^n\times F$ be an infinite finitely generated abelian
group, where $n\geq1$ and $F$ is finite. Then $A$ contains an
e-definable subgroup isomorphic to $\Z^n$. More precisely, if
$d=|F|$, the subgroup $A^d=\{y^d:y\in A\}$ is free abelian of rank
$n$, has finite index in $A$, and is e-definable in $A$.
\end{lemma}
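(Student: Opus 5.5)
By the structure theorem, write $A=\Z^n\times F$ with $F$ finite of order $d=|F|$; this decomposition is only used for the computation, not in the defining formula. The plan is to verify three things about the subgroup $A^d=\{y^d:y\in A\}$: that it is a subgroup isomorphic to $\Z^n$, that it has finite index in $A$, and that it is e-definable in $A$.

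Since $A$ is abelian, the map $y\mapsto y^d$ is an endomorphism of $A$. Its image $A^d$ is therefore a subgroup, and no width issue arises because a single $d$-th power already exhausts the image. Computing componentwise, for $y=(v,t)$ with $v\in\Z^n$ and $t\in F$, we get $y^d=(dv,t^d)=(dv,1)$, since the exponent of $F$ divides $d=|F|$ by Lagrange's theorem. Hence $A^d=d\Z^n\times\{1\}$. This subgroup is free abelian of rank $n$, because multiplication by $d$ is injective on $\Z^n$. Its index is
\[
[A:A^d]=[\Z^n:d\Z^n]\cdot|F|=d^n\cdot d<\infty .
\]

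For e-definability, take the one-variable Diophantine formula
\[
\psi(x)=\exists y\;\, x=y^d ,
\]
which is the single equation $x^{-1}y^d=1$ and has no coefficients. An element $x\in A$ satisfies $\psi$ exactly when $x\in A^d$, so $\psi$ defines $A^d$ in $A$. The formula uses $d$ explicitly, but $d$ is a fixed constant attached to $A$, which is all the definition of e-definability requires.

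There is no real obstacle here. The only point that needs care is the identity $t^d=1$ for every $t\in F$, which is exactly Lagrange's theorem applied to the cyclic subgroup $\langle t\rangle\le F$. If the lemma is later used inside $H$, one would compose the formula $\psi$ with an e-definition of $A$ in $H$; that is a separate step and is not part of this statement.
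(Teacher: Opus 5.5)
Your proof is correct and is essentially the paper's argument: the power map $y\mapsto y^d$ is an endomorphism with image $d\Z^n\times\{1\}$, and this image is defined by the single equation $x=y^d$. Your extra details, namely Lagrange's theorem for $t^d=1$ and the index $[A:A^d]=d^{n+1}$, only make explicit what the paper leaves implicit.
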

\begin{proof}
Since $A$ is abelian, the power map $\psi: y\mapsto y^d$ is a
homomorphism. Hence, $\psi(A) = \psi(\Z^n\times F) = d\Z^n\times\{1\}$, which is free abelian of rank $n$ and has
finite index. The formula
\[
x\in A^d\quad\Longleftrightarrow\quad
\exists y\;(x=y^d)
\]
defines this subgroup by an equation in $A$.
This proves the lemma. 
\end{proof}

\begin{theorem}\label{main}
Let $B$ be an infinite finitely generated abelian group, and
let $H$ be a finitely generated group containing an infinite 
e-definable finitely generated abelian subgroup $A$.  Then the
Diophantine problem in $G=B\wr H$ is undecidable.
\end{theorem}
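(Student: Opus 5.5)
We chain the earlier reductions: $\Dio(T)\le_P\Dio(K)$ by the retract in Lemma~\ref{solveTinK}, then $\Dio(K)\le_P\Dio(G)$ by relativizing to the e-definable subgroup $K$ from Lemma~\ref{definableInG}. Finally, $\Dio(T)$ is undecidable because $T\cong B\wr A$ is a wreath product of two finitely generated indicable groups.

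\emph{Step 1 (undecidability in $T$).} By the setup preceding Lemma~\ref{definableInG}, $T=M\rtimes A\cong B\wr A$. Both $B$ and $A$ are infinite finitely generated abelian groups, so their abelianizations have positive free rank. By Proposition~\ref{prop:indicable-characterization}, $B,A\in\mathcal I_{\mathrm{fg}}$. Theorem~\ref{cor:wreath-diophantine-reduction} then gives that $\Dio(T)$ is undecidable. Torsion causes no trouble, since Proposition~\ref{prop:indicable-wreath-retract} only needs infinite cyclic retracts.

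\emph{Step 2 (from $T$ to $K$).} Let $\Sigma(X_1,\dots,X_m)$ be a finite system with coefficients in $T$. Its coefficients are words in $A\cup B$, hence words in the generators of $G$. By Lemma~\ref{solveTinK}, $\Sigma$ is solvable in $T$ if and only if it is solvable in $K$.

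\emph{Step 3 (from $K$ to $G$).} Lemma~\ref{definableInG} gives a Diophantine formula $\kappa(x)=\exists\bar y\,\bigwedge_\ell[w_\ell(x,\bar y,\bar h),b_1]=1$ defining $K$ in $G$. Form $\Sigma^*$ by adjoining to $\Sigma$, for each unknown $X_i$, the equations $[w_\ell(X_i,\bar Y_i,\bar h),b_1]=1$ for all $\ell$, where each $\bar Y_i$ is a fresh tuple of unknowns. Any solution of $\Sigma^*$ in $G$ has all $X_i\in K$, so it is a solution of $\Sigma$ in $K$. Conversely, a solution of $\Sigma$ in $K$ extends to a solution of $\Sigma^*$ by choosing witnesses $\bar Y_i$, which exist by Lemma~\ref{definableInG}. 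The map $\Sigma\mapsto\Sigma^*$ is computable, indeed polynomial time, because the formula $\kappa$ and the coefficients $\bar h,b_1$ are fixed words. Hence $\Dio(T)\le_P\Dio(G)$, and by Step~1 $\Dio(G)$ is undecidable.

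\emph{Main obstacle.} The only delicate point is in Step~3: the coefficients of $\Sigma$, given as words in the generators of $T$, must be written as words in the generators of $G$, with $G$ and $T$ both finitely generated. This holds since $T=\langle a_1,\dots,a_k,b_1,\dots,b_n\rangle$ and each $a_j$ is a fixed word in the generators of $H$. One should also check that the equivalence in Lemma~\ref{definableInG} uses only $C_G(b_1)=N$, from Proposition~\ref{prop1}(iii), which needs $b_1\neq1$ and $B$ abelian, both of which hold here. Note that the e-definability of $A$ itself, the second half of Lemma~\ref{definableInG}, is not even needed: $K$ suffices because the retraction $\rho$ of Lemma~\ref{solveTinK} is defined on all of $K$.
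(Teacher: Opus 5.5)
Your proposal is correct and follows the paper's proof essentially step for step. $\Dio(T)=\Dio(B\wr A)$ is undecidable because $B$ and $A$ are finitely generated indicable groups; solvability in $T$ and in $K$ coincide via the retraction $\rho$ of Lemma~\ref{solveTinK}; and relativizing to the e-definable subgroup $K$ of Lemma~\ref{definableInG} gives $\Dio(T)\le_P\Dio(G)$. Your explicit construction of $\Sigma^*$ just unpacks the paper's appeal to Lemma~\ref{lem:interpretation-reduction}, and you are right that only the e-definability of $K$, not of $A$, is used.
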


\begin{proof}
Let $N=B^{(H)}$ and let $\pi:G\to H$ be the canonical retraction.
By Lemma~\ref{definableInG}, the subgroup
\[
K=\pi^{-1}(A)=N\rtimes A
\]
is e-definable in $G$. Put
\[
M=\{f\in N:\supp(f)\subseteq A\},
\qquad T=M\rtimes A\cong B\wr A.
\]
We recall explicitly the retraction from Lemma~\ref{solveTinK}:
\[
\boxed{\rho:K\twoheadrightarrow T\cong B\wr A,\qquad
       \rho(\alpha f)=\alpha\lambda(f),}
\]
where $\alpha\in A$, $f\in N$, and
\[
\lambda(f)(s)=
\begin{cases}
f(s),&s\in A,\\
1,&s\notin A.
\end{cases}
\]
Thus $\lambda$ restricts a base function to $A$ and extends it by
the identity outside $A$. It is a homomorphism and is
$A$-equivariant: for $\beta\in A$, the equivalence
$\beta s\in A\iff s\in A$ gives
$\lambda(f^\beta)=\lambda(f)^\beta$. This yields
\[
\rho((\alpha f)(\beta g))
 =\alpha\beta\lambda(f^\beta g)
 =\alpha\beta\lambda(f)^\beta\lambda(g)
 =\rho(\alpha f)\rho(\beta g).
\]
Moreover, $\lambda(f)=f$ for $f\in M$, so $\rho$ fixes $T$
pointwise. This proves that $\rho$ is a retraction.

Both $A$ and $B$ are infinite finitely generated abelian groups,
so they are indicable. Theorem~\ref{cor:wreath-diophantine-reduction}
therefore gives undecidability of $\Dio(T)=\Dio(B\wr A)$.
For a finite system with coefficients in $T$, constrain its variables
to the e-definable subgroup $K$. Such a system has a solution in $K$
if and only if it has a solution in $T$: apply $\rho$ to a solution
in $K$, using that $\rho$ fixes every coefficient.
Expanding the defining equations for $K$ gives a system in $G$.
Because $K$ is e-definable in $G$, we have that $K$ is e-interpretable in $G$ - so by Lemma \ref{lem:interpretation-reduction} this process provides a polynomial-time reduction $\Dio(T) \leq_P \Dio(G)$. Consequently, $\Dio(G)$ is undecidable. 
\end{proof}

\subsection{Groups with e-definable nontrivial free abelian subgroups of finite rank}
\label{se:ED}

Write ${\mathcal ED}$ for the class of groups containing an
e-definable nontrivial free abelian subgroup of finite rank.
Parameters are allowed in the defining equations. We give several
families of groups in this class.

The first type of groups in ${\mathcal ED}$ is related to abelian centralizers.  
For a subset $S\subseteq G$, write
$\Cent_G(S)=\{x\in G \mid [x,s]=1\text{ for every }s\in S\}$.

\begin{proposition}[The maximal abelian subgroup criterion]\label{thm:maximal}
Let $M=\langle m_1,\ldots,m_k\rangle$ be a nontrivial finitely generated
maximal abelian subgroup of $G$. Then
\begin{equation}\label{eq:maximal-definition}
M=\Cent_G(M)=\Cent_G(m_1,\ldots,m_k)
 =\{x\in G:[x,m_i]=1\ (1\leq i\leq k)\}.
\end{equation}
Thus $M$ is e-definable with parameters by a finite system of equations
without auxiliary variables.

If $M$ is infinite, it contains an e-definable nontrivial free abelian
subgroup of finite rank, which is also e-definable in $G$.
\end{proposition}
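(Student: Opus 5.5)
The first part rests on the chain of inclusions
\[
M\subseteq \Cent_G(M)\subseteq M,\qquad \Cent_G(M)=\Cent_G(m_1,\ldots,m_k).
\]
Since $M$ is abelian, every element of $M$ commutes with all of $M$, so $M\subseteq\Cent_G(M)$. Conversely, take $x\in\Cent_G(M)$. The subgroup $\langle M,x\rangle$ is abelian, because $x$ commutes with every element of $M$ and $M$ is abelian. It contains $M$, so maximality gives $\langle M,x\rangle=M$, hence $x\in M$.

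The equality $\Cent_G(M)=\Cent_G(m_1,\ldots,m_k)$ holds because an element commuting with each generator $m_i$ commutes with every word in the $m_i^{\pm1}$. The reverse inclusion is trivial. Hence $M$ is the solution set of the finite system $[x,m_i]=1$, $1\le i\le k$, with coefficients $m_i$ and no auxiliary variables. This is an e-definition in the sense of the definition of Diophantine sets.

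For the second part, assume $M$ is infinite. Being finitely generated abelian, $M\cong\Z^n\times F$ with $F$ finite, and $n\geq1$ since $M$ is infinite. Lemma~\ref{le:e-def-abelian} with $d=|F|$ shows that $M^d=\{y^d:y\in M\}$ is free abelian of rank $n$, hence nontrivial, and e-definable in $M$ by $\exists y\,(x=y^d)$. The quantified variable must range over $M$ only, so in $G$ I will use the Diophantine formula
\[
\exists y\;\Bigl(x=y^d\ \wedge\ \bigwedge_{i=1}^k [y,m_i]=1\Bigr).
\]
By the first part, the second conjunct forces $y\in M$. So this formula defines exactly $M^d$ in $G$. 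Note that $x\in M$ then holds automatically, as $x=y^d$ with $y\in M$.

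The only subtle point is this relativization. One cannot use $\exists y\,(x=y^d)$ directly in $G$, since $d$-th roots in $G$ need not lie in $M$. Adding the centralizer equations on the witness $y$ handles this, and those equations are exactly what the first part provides. The case $d=1$ ($M$ torsion-free) is included: then $M^d=M$ itself.
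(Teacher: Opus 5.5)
Your proof is correct and follows the paper's argument: maximality gives $M=\Cent_G(m_1,\ldots,m_k)$, and Lemma~\ref{le:e-def-abelian} produces $M^d$ with $d$ the order of the torsion subgroup. The only difference is that you write out the relativized formula $\exists y\,\bigl(x=y^d\wedge\bigwedge_{i=1}^k[y,m_i]=1\bigr)$ explicitly, which is exactly what the paper's one-line appeal to transitivity of e-definability amounts to.
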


\begin{proof}
An element $x$ commutes with all the $m_i$ if and only if it centralizes
$M$. In that event, $\langle M,x\rangle$ is abelian, so the maximality of
$M$ implies $x\in M$. The reverse inclusion holds because $M$ is abelian.
This proves~\eqref{eq:maximal-definition}.

If $M$ is infinite, let $d$ be the order of its torsion subgroup.
By Lemma~\ref{le:e-def-abelian}, $M^d$ is a nontrivial free abelian
subgroup of finite rank, e-definable in $M$. Transitivity of
e-definability makes $M^d$ e-definable in $G$.
\end{proof}

\begin{proposition}
Let $H$ be an infinite polycyclic group. Then $H \in {\mathcal ED}$.
\end{proposition}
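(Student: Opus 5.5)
Since $H$ is polycyclic, it is verbally elliptic, and the natural target is a free abelian subgroup of finite rank that is e-definable in $H$. The plan is to find a nontrivial normal abelian subgroup $A$ of $H$ that is verbal, or at least e-definable, and then apply Lemma~\ref{le:e-def-abelian}. Concretely, $H$ is infinite and polycyclic, so it has a torsion-free normal subgroup of finite index, and in particular a nontrivial torsion-free nilpotent or abelian piece. Consider the derived series
\[
H=H^{(0)}\geq H^{(1)}\geq\cdots\geq H^{(\ell)}=1 .
\]
Each term $H^{(j)}$ is verbal for the iterated commutator word $\delta_j$. By finite verbal width (Section~\ref{sse:Verbal}) each $H^{(j)}$ is e-definable by a formula of the form \eqref{eq:verbal1}. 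The last nontrivial term $A_0=H^{(\ell-1)}$ is a finitely generated abelian group, since subgroups of polycyclic groups are finitely generated.

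\emph{Case 1: $A_0$ is infinite.} Apply Lemma~\ref{le:e-def-abelian} with $d$ the order of the torsion of $A_0$. Then $A_0^d=\{y^d:y\in A_0\}$ is free abelian of rank at least one, and $x\in A_0^d$ iff $\exists y\,(x=y^d\wedge y\in A_0)$. Combining this with the e-definition of $A_0$ gives e-definability in $H$.

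\emph{Case 2: $A_0$ is finite.} This is the main obstacle, and the plan is to replace the derived series by a verbal series adapted to finite index. First pass to the characteristic finite-index subgroup $H^{m}=\langle h^m\rangle$ for a suitable $m$. In a polycyclic group every finite-index subgroup contains a verbal subgroup $\langle w(H)\rangle$ of finite index, for instance a power subgroup, and this subgroup is e-definable by verbal ellipticity. Choose $m$ so that $P=\langle h^m: h\in H\rangle$ is torsion-free; this is possible since polycyclic groups are virtually torsion-free and the power subgroups are cofinal among finite-index subgroups. Then $P$ is infinite and torsion-free. The words $\delta_j(x_1^m,\ldots)$ define the terms of the derived series of $P$ verbally in $H$, so these terms are again e-definable. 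The last nontrivial term of the derived series of $P$ is a nontrivial torsion-free finitely generated abelian group, hence free abelian of finite rank $\geq1$.

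The delicate point is making sure the derived terms of $P$ are verbal subgroups of $H$ of finite width. They are generated by values of the composite word $\delta_j(x_1^m,\ldots,x_r^m)$ only up to products; I would handle this by noting that $P^{(j)}=\langle \delta_j(P)\rangle$. Since $P$ itself is e-definable, I would substitute elements of $P$, each expressed via its e-definition as a bounded product of $m$-th powers, into a bounded product of $\delta_j$-values. Verbal ellipticity of $P$, itself polycyclic, supplies the bound on the product length. Transitivity of e-definability then gives e-definability in $H$, and $H\in\mathcal{ED}$.
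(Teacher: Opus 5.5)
Your argument is correct, but it takes a different route from the paper. The paper picks an element $g$ of infinite order from an infinite cyclic factor of a polycyclic series and takes a maximal abelian subgroup $A\ni g$. Since every subgroup of a polycyclic group is finitely generated, so is $A$, and the maximal abelian subgroup criterion (Proposition~\ref{thm:maximal}) gives $A=\Cent_H(m_1,\ldots,m_k)$. This is cut out by the parameter equations $[x,m_i]=1$ with no auxiliary variables, and Lemma~\ref{le:e-def-abelian} then passes to $A^d$.

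You work instead with verbal subgroups. You use finite width of $x^m$ and of $\delta_j$, virtual torsion-freeness, and the fact that power subgroups of a polycyclic group have finite index and are cofinal among finite-index subgroups. Together these make the last nontrivial derived term of the torsion-free power subgroup $P$ isomorphic to $\Z^r$ with $r\geq 1$.

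Two small remarks. First, your Case~2 construction also covers Case~1, so the case split is unnecessary. Second, the ``delicate point'' is harmless. Relativizing the finite-width formula for $\delta_j$ in $P$ to the e-definable set $P$ works as you say. Alternatively, the composite word $\delta_j(x_1^m,\ldots)$ already generates $P^{(j)}$, because word values are closed under conjugation: commutators of values generate a normal subgroup modulo which the values commute.

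Your route buys a stronger conclusion: the free abelian subgroup you produce is characteristic in $H$ and e-definable by a coefficient-free system. Its cost is dependence on verbal ellipticity of polycyclic groups, a nontrivial external theorem that the paper only imports by citation in Section~\ref{sse:Verbal}. The paper's proof needs only that subgroups of polycyclic groups are finitely generated, and it reuses the same centralizer template the paper applies to virtually abelian, hyperbolic and limit groups.
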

\begin{proof}
Let $H$ be an infinite polycyclic group, and choose a polycyclic
series
\[
H=H_0\trianglerighteq H_1\trianglerighteq\cdots
\trianglerighteq H_r=1,
\]
where each factor $H_i/H_{i+1}$ is cyclic. If all these factors
were finite, then $H$ would be finite. Hence some factor
$H_i/H_{i+1}$ is infinite cyclic.

Choose $g\in H_i$ whose image generates $H_i/H_{i+1}$.
Since that image has infinite order, $g$ has infinite order.
Let $A$ be a maximal abelian subgroup of $H$ containing $g$;
it exists by Zorn's lemma. Every subgroup of a polycyclic group
is finitely generated, so $A$ is finitely generated and infinite.
By Proposition~\ref{thm:maximal}, $A$ is e-definable in $H$.
If $d$ is the order of the torsion subgroup of $A$, then $A^d$
is nontrivial free abelian of finite rank and e-definable in $H$,
by Lemma~\ref{le:e-def-abelian}. Thus $H\in{\mathcal ED}$.

\end{proof}

\begin{proposition}
Let $H$ be a finitely generated group whose center is infinite and finitely
generated. Then $H \in {\mathcal ED}$.
\end{proposition}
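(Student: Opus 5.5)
The plan is to show that the center $Z=Z(H)$ is e-definable in $H$, and then pass to a free abelian subgroup of finite index using Lemma~\ref{le:e-def-abelian}, exactly as in Proposition~\ref{thm:maximal}.

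\emph{Step 1: $Z(H)$ is e-definable.} Fix a finite generating set $h_1,\ldots,h_m$ of $H$. An element commutes with everything in $H$ if and only if it commutes with each generator. Hence
\[
Z(H)=\Cent_H(h_1,\ldots,h_m)=\{x\in H:[x,h_i]=1\ (1\le i\le m)\}.
\]
This is a finite system of equations with coefficients $h_1,\ldots,h_m$ and no auxiliary variables. This is the only place where finite generation of $H$ is used. Finite generation of $Z$ itself is a hypothesis, so no structural argument about subgroups is needed.

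\emph{Step 2: pass to a free abelian subgroup.} Since $Z$ is an infinite finitely generated abelian group, we can write $Z\cong\Z^n\times F$ with $n\ge1$ and $F$ finite. Let $d=|F|$. By Lemma~\ref{le:e-def-abelian}, the subgroup $Z^d=\{y^d:y\in Z\}$ is free abelian of rank $n\ge1$ and is e-definable in $Z$ by the formula $\exists y\,(x=y^d)$, with $y\in Z$.

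\emph{Step 3: transitivity.} To make $Z^d$ e-definable in $H$, we relativize the witness $y$ to $Z$ by adding the equations $[y,h_i]=1$. This gives
\[
x\in Z^d\iff \exists y\ \Bigl(x=y^d\ \wedge\ \bigwedge_{i=1}^m [y,h_i]=1\Bigr).
\]
Any such $x$ is a $d$-th power of a central element, so it lies in $Z^d$. Conversely, every element of $Z^d$ arises in this way. Therefore $Z^d$ is a nontrivial free abelian subgroup of finite rank that is e-definable in $H$, and so $H\in{\mathcal ED}$.

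I do not expect a real obstacle here. The one point that needs care is the relativization in Step 3: the witness $y$ must be forced to be central, not merely an arbitrary element of $H$ whose $d$-th power happens to be central. The added commutator equations handle this.
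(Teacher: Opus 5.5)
Your proof is correct and follows the paper's argument. You define $Z(H)$ by the equations $[x,h_i]=1$, then apply Lemma~\ref{le:e-def-abelian} to obtain the e-definable free abelian subgroup $Z(H)^d$. Your explicit relativization of the witness $y$ to $Z(H)$ just spells out the transitivity step that the paper states in one phrase.
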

\begin{proof}
If $H=\langle h_1,\ldots,h_s\rangle$, then
\[
Z(H)=\{z\in H:[z,h_i]=1\ (1\leq i\leq s)\}.
\]
Thus $Z(H)$ is e-definable in $H$. Let $d$ be the order of the
torsion subgroup of $Z(H)$. By Lemma~\ref{le:e-def-abelian},
$Z(H)^d$ is nontrivial free abelian of finite rank and e-definable
in $Z(H)$, hence in $H$. Therefore $H\in{\mathcal ED}$.
\end{proof}

\begin{proposition}
Let $H$ be an infinite finitely generated virtually abelian group. Then $H \in {\mathcal ED}$.
\end{proposition}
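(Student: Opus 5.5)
Since $H$ is infinite, finitely generated and virtually abelian, it has an abelian subgroup $A_0$ of finite index, and $A_0$ is finitely generated and infinite. Our goal is an e-definable nontrivial free abelian subgroup of finite rank. The difficulty is that $A_0$ need not be e-definable. The plan is therefore to replace $A_0$ by a subgroup that is defined by a single power equation and is automatically free abelian of positive rank.

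First pass to a normal subgroup. The normal core $A_1=\bigcap_{h\in H}A_0^h$ is a finite intersection, because $A_0$ has only finitely many conjugates. Hence $A_1$ is a normal abelian subgroup of finite index, say $m=[H:A_1]$. By Lemma~\ref{le:e-def-abelian}, after raising to the order of the torsion subgroup of $A_1$, we may also take a characteristic subgroup $A_2=A_1^{d}\cong\Z^n$ with $n\ge1$. It is normal in $H$ and of finite index, say $e=[H:A_2]$.

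Now $x^{e}\in A_2$ for every $x\in H$, because $H/A_2$ is a finite group of order $e$. The candidate is
\[
P=\{x^{e}:x\in H\}\subseteq A_2,
\]
which is a Diophantine set via $\exists y\,(x=y^e)$. The problem is that $P$ need not be a subgroup. Two fixes are available.

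\emph{(a) Centralizer.} Let $D=\Cent_H(A_2)$. Since $A_2$ is finitely generated, $D$ is defined by finitely many commutator equations with the generators of $A_2$ as parameters. It contains $A_2$ and has finite index. Its center $Z(D)$ contains $A_2$, so $Z(D)$ is infinite. Moreover $Z(D)$ is finitely generated, being a subgroup of a finitely generated virtually abelian group (such groups are polycyclic-by-finite).

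\emph{(b) Definability of $Z(D)$.} If $D=\langle g_1,\dots,g_s\rangle$, then
\[
Z(D)=\{z: z\in D,\ [z,g_i]=1\ (1\le i\le s)\}.
\]
This is e-definable in $H$, by combining the equations for $D$ with $s$ commutator equations.

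Then apply Lemma~\ref{le:e-def-abelian} to $Z(D)$: with $d'$ the order of its torsion subgroup, $Z(D)^{d'}$ is nontrivial free abelian of finite rank. It is e-definable in $Z(D)$, hence in $H$ by transitivity of e-definability. This shows $H\in\mathcal{ED}$.

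The main obstacle is verifying the facts used about $D$: that $D$ is finitely generated, so that its center is defined by finitely many equations, and that $Z(D)$ is finitely generated and infinite. Finite generation of $D$ holds because it has finite index in the finitely generated group $H$. $Z(D)\supseteq A_2$ is infinite because $A_2\le D$ and $A_2$ is central in $D$ by definition of $D$. $Z(D)$ is finitely generated because every subgroup of the finitely generated virtually abelian group $H$ is finitely generated. With these checks in place the proof is just the chain of definitions above.
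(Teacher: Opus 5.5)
Your proof is correct, but it takes a different route from the paper. The paper enlarges a finite-index abelian subgroup $A$ to a maximal abelian subgroup $M$ by maximizing $[M:A]$ among abelian overgroups. Maximality gives $C_H(M)=M$, so $M$ is cut out by $[x,m_i]=1$ with generators of $M$ as parameters. This places the statement under the maximal abelian subgroup criterion (Proposition~\ref{thm:maximal}) already used for the polycyclic and hyperbolic cases.

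You instead use the double centralizer $Z(C_H(A_2))=C_H(C_H(A_2))$. It is canonical, automatically abelian, and contains $A_2$. It needs no maximality argument, only finite generation of $D=C_H(A_2)$ (via Schreier) so that finitely many parameters suffice. Both routes then finish with Lemma~\ref{le:e-def-abelian}.

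Your $Z(D)$ is in general not maximal abelian. For example, $H=\Z\times Q_8$ with $A_0=\Z\times 1$ gives $Z(D)=\Z\times\{\pm1\}$. That is harmless, since only infinitude and e-definability are needed.

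Two simplifications are available. First, the normal core, the passage to $A_2=A_1^d$, and the power set $P$ play no role: steps (a) and (b) work verbatim with $A_0$ in place of $A_2$. Relatedly, (a) and (b) are successive steps of a single fix, not two alternative fixes. Second, since $A_2\le D$ you have $C_H(D)\le C_H(A_2)=D$. So the condition $z\in D$ in (b) is redundant, and $Z(D)$ is defined by the equations $[z,g_i]=1$ alone.
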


\begin{proof}
Choose an abelian subgroup \(A\leq H\) of finite index. Among all abelian
subgroups containing \(A\), choose \(M\) for which \([M:A]\) is largest.
This is possible because these indices are positive integers bounded
by \([H:A]\). Then \(M\) is a maximal abelian subgroup of \(H\):
a proper abelian overgroup would have a strictly larger index over \(A\).

Since \(M\) has finite index in the finitely generated group \(H\),
Schreier's theorem implies that \(M\) is finitely generated.
It is infinite because \(H\) is infinite. Write
\(M=\langle m_1,\ldots,m_s\rangle\).
Maximal abelianness gives
\[
 C_H(M)=M.
\]
Consequently, $M$ is defined in $H$ by the finite system
$[x,m_i]=1$ for $1\leq i\leq s$. If $d$ is the order of the
torsion subgroup of $M$, then Lemma~\ref{le:e-def-abelian}
shows that $M^d$ is nontrivial free abelian of finite rank and
e-definable in $M$, hence in $H$. Thus $H\in{\mathcal ED}$.
\end{proof}

Below \emph{hyperbolic group} means a finitely generated word-hyperbolic
group. 

\begin{proposition}\label{thm:hyperbolic}
Every infinite hyperbolic group $G$ contains an e-definable infinite
cyclic subgroup. If $G$ is torsion-free, then
for every $1\ne g\in G$ the centralizer $\Cent_G(g)$ is itself infinite
cyclic and is defined by the single equation $[x,g]=1$.
\end{proposition}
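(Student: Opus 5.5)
The plan is to use the maximal abelian subgroup criterion (Proposition~\ref{thm:maximal}) together with two standard facts about hyperbolic groups. First, every infinite hyperbolic group contains an element $g$ of infinite order. Second, the centralizer $\Cent_G(g)$ of such an element is virtually infinite cyclic; more precisely, $\langle g\rangle$ has finite index in $\Cent_G(g)$, and $\Cent_G(g)$ is contained in the maximal elementary subgroup $E(g)$.

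For the general statement, we proceed as follows.

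\begin{enumerate}[label=\textup{(\arabic*)}]
\item Fix $g$ of infinite order.
\item Choose a maximal abelian subgroup $M$ of $G$ containing $g$; it exists by Zorn's lemma.
\item Since $M\le \Cent_G(g)$ and $\Cent_G(g)$ is virtually cyclic, $M$ is finitely generated and infinite.
\item By Proposition~\ref{thm:maximal}, $M=\Cent_G(m_1,\ldots,m_k)$ is e-definable by equations without auxiliary variables.
\item Let $d$ be the order of the torsion subgroup of $M$. Then Lemma~\ref{le:e-def-abelian} gives $M^d\cong\Z^r$ e-definable in $M$, hence in $G$ by transitivity.
\end{enumerate}

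It remains to see that $r=1$. The group $M$ is an infinite abelian subgroup of the virtually cyclic group $\Cent_G(g)$, so its free rank is exactly $1$. Thus $M^d$ is infinite cyclic.

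For the torsion-free case, let $1\ne g\in G$; it has infinite order. The centralizer $C=\Cent_G(g)$ is a torsion-free virtually infinite cyclic group. A torsion-free virtually $\Z$ group is $\Z$: it has a finite-index normal infinite cyclic subgroup, and the finite quotient acts on it. A standard argument shows that a torsion-free virtually cyclic group has no $D_\infty$ quotient with a finite kernel containing torsion, and so is infinite cyclic. Alternatively, cite that in torsion-free hyperbolic groups $E(g)$ is cyclic. Hence $C$ is infinite cyclic, and it is defined by the single equation $[x,g]=1$ by definition of the centralizer. In particular $C$ is maximal abelian, consistent with Proposition~\ref{thm:maximal}.

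The main obstacle is not the definability, which is immediate from earlier results, but correctly invoking the hyperbolic-group facts. These are the existence of infinite-order elements in infinite hyperbolic groups, the virtual cyclicity of centralizers of infinite-order elements, and the fact that torsion-free virtually cyclic groups are cyclic. I would cite Gromov and the standard texts (e.g.\ Bridson--Haefliger, Ghys--de la Harpe) for these rather than reprove them. For the last point, the cleanest route is to quote the result that torsion-free virtually $\Z$ groups are infinite cyclic. This follows from Stallings' theorem on ends: the group acts freely on a tree with finitely many orbits, hence is free, and a virtually cyclic free group is $\Z$.
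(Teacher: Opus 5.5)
Your proposal is correct and follows essentially the same route as the paper: you take an infinite-order $g$ and a maximal abelian $M\ni g$ inside the virtually cyclic $\Cent_G(g)$, then apply Proposition~\ref{thm:maximal} and Lemma~\ref{le:e-def-abelian} to get $M^d$, and in the torsion-free case you use the classification of virtually cyclic groups. The one blemish is your muddled sentence about ``$D_\infty$ quotients''; the clean statement is that an infinite virtually cyclic group is finite-by-$\Z$ or finite-by-$D_\infty$, so torsion-freeness forces the finite kernel to be trivial and rules out $D_\infty$.
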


\begin{proof}
We use two standard facts: an infinite hyperbolic group contains an
element $g$ of infinite order, and $\langle g\rangle$ has finite index in
$\Cent_G(g)$; see \cite[Theorems~6.5.1 and~6.5.10]{Loh}.

Choose a maximal abelian subgroup $M$ of $G$ containing $g$. Such a
subgroup exists by Zorn's lemma, since a union of a chain of abelian
subgroups containing $g$ is again abelian. We have
\[
\langle g\rangle\leq M\leq\Cent_G(g).
\]
Hence $\langle g\rangle$ has finite index in $M$. Thus $M$ is
finitely generated abelian of rank one and is e-definable by
Proposition~\ref{thm:maximal}. If $d$ is the order of its torsion
subgroup, Lemma~\ref{le:e-def-abelian} gives an infinite cyclic
subgroup $M^d$ e-definable in $M$, hence in $G$.

If $G$ is torsion-free, every nonidentity element has infinite order.
Its centralizer is torsion-free and virtually infinite cyclic, hence
infinite cyclic, by the elementary classification of virtually cyclic
groups. Its defining equation is $[x,g]=1$.
\end{proof}

A finitely generated group is \emph{toral relatively hyperbolic}
if it is torsion-free and hyperbolic relative to a finite collection
of finitely generated abelian subgroups. We use the standard facts
that its maximal abelian subgroups are finitely generated and
malnormal; see \cite[Section~2, p.~3490]{GL2015}.
Here a subgroup \(M\leq G\) is \emph{malnormal} if
\(M\cap u^{-1}Mu=\{1\}\) for every \(u\notin M\).

\begin{proposition}
Let \(G\) be a nontrivial toral relatively hyperbolic group.
For every \(g\in G\setminus\{1\}\), the centralizer \(C_G(g)\)
is an infinite free abelian group of finite rank and is
\(e\)-definable in \(G\) by the single equation
\[
 [x,g]=1.
\]
\end{proposition}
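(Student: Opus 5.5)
The equation $[x,g]=1$ defines $C_G(g)$ by definition of the centralizer, and it needs no auxiliary variables, so the e-definability part is immediate. The substance of the proposition is that $C_G(g)$ is infinite free abelian of finite rank. The plan is to identify $C_G(g)$ with the unique maximal abelian subgroup containing $g$, and then use the standard facts quoted above from \cite{GL2015}: maximal abelian subgroups of toral relatively hyperbolic groups are finitely generated and malnormal.

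First, choose by Zorn's lemma a maximal abelian subgroup $M$ of $G$ containing $g$, exactly as in the proof of Proposition~\ref{thm:hyperbolic}. Clearly $M\leq C_G(g)$, since $M$ is abelian and contains $g$.

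For the reverse inclusion, let $x\in C_G(g)$ and suppose $x\notin M$. Then $x^{-1}gx=g$ lies in $M\cap x^{-1}Mx$. By malnormality this intersection is trivial, so $g=1$, contradicting $g\neq 1$. Hence $C_G(g)=M$. This is the key step, and the only one where the specific structure of toral relatively hyperbolic groups enters, through malnormality. I expect it to be the main obstacle only insofar as we must rely on the cited facts rather than prove malnormality from scratch.

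Finally, $M$ is finitely generated by the cited facts and torsion-free because $G$ is, so by the structure theorem for finitely generated abelian groups it is free abelian of finite rank. It is nontrivial since it contains $g\neq1$, and a nontrivial torsion-free group is infinite. So $C_G(g)$ is an infinite free abelian group of finite rank, defined by the single equation $[x,g]=1$ with parameter $g$. In particular $G\in\mathcal{ED}$, and Theorem~\ref{main} applies to $B\wr G$ for every infinite finitely generated abelian $B$.
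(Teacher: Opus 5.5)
Your proposal is correct and follows the paper's proof essentially step for step. You choose a maximal abelian subgroup $M\ni g$, obtain $C_G(g)=M$ from malnormality (you argue by contradiction where the paper concludes $x\in M$ directly), and then use the cited finite generation together with torsion-freeness to get a free abelian group of finite rank, defined by $[x,g]=1$.
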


\begin{proof}
Fix \(g\neq1\), and choose a maximal abelian subgroup \(M\)
containing \(g\). We claim that \(C_G(g)=M\).
Since \(M\) is abelian, \(M\subseteq C_G(g)\).
Conversely, if \(x\in C_G(g)\), then \(x^{-1}gx=g\), and hence
\[
 1\neq g\in M\cap x^{-1}Mx.
\]
Malnormality of \(M\) implies \(x\in M\), proving the claim.

The subgroup \(M\) is finitely generated torsion-free and abelian.
This proves the proposition.
\end{proof}

A \emph{limit group} here means a finitely generated fully residually
free group. Thus, for every finite set $S\subseteq L\setminus\{1\}$,
there is a homomorphism $\varphi:L\to F$ to a nonabelian free group such
that $\varphi(s)\ne1$ for all $s\in S$.

\begin{proposition}\label{thm:limit}
Let $L$ be a nontrivial limit group. For every $1\ne g\in L$,
\[
\Cent_L(g)\cong\Z^{r(g)},\qquad 1\leq r(g)<\infty.
\]
This centralizer is maximal abelian and e-definable by $[x,g]=1$.
In particular, every nontrivial limit group contains an e-definable
free abelian subgroup of positive finite rank.
\end{proposition}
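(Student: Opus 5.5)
The plan is to combine three standard facts about limit groups: they are torsion-free, they are commutative-transitive (CSA), and their abelian subgroups are finitely generated. Each fact comes from full residual freeness, except finite generation, which I will cite.

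\emph{Torsion-freeness.} If $1\ne g$ had finite order, choose $\varphi:L\to F$ with $\varphi(g)\ne1$. Then $\varphi(g)$ would be a nontrivial torsion element of a free group, which is impossible.

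\emph{Commutative transitivity.} Suppose $1\ne g$ and $x,y\in\Cent_L(g)$ with $[x,y]\ne1$. Apply full residual freeness to the finite set $\{g,[x,y]\}$ to get $\varphi:L\to F$ killing neither element. In $F$, the elements $\varphi(x)$ and $\varphi(y)$ both commute with $\varphi(g)\ne1$, so both lie in the cyclic centralizer of $\varphi(g)$. Hence they commute, which contradicts $\varphi([x,y])\ne1$. It follows that $\Cent_L(g)$ is abelian.

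\emph{Maximality and the defining equation.} Any abelian subgroup containing $g$ lies in $\Cent_L(g)$. Since $\Cent_L(g)$ is itself abelian, it is the unique maximal abelian subgroup containing $g$. By Proposition~\ref{thm:maximal}, or directly by definition, it is cut out by the single equation $[x,g]=1$, so it is e-definable with parameter $g$.

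\emph{Structure of the centralizer.} It remains to show that $\Cent_L(g)$ is finitely generated. This is the main obstacle, because full residual freeness alone does not give finite generation of abelian subgroups. I would cite the standard result that abelian subgroups of limit groups are finitely generated, which follows from Kharlampovich--Myasnikov or Sela: limit groups are finitely presented, act on trees with a JSJ/cyclic hierarchy, or embed in iterated extensions of centralizers. In particular, every abelian subgroup is free abelian of finite rank. Granting this, $\Cent_L(g)$ is a finitely generated torsion-free abelian group, hence isomorphic to $\Z^{r(g)}$. Since it contains $g\ne1$, which has infinite order by torsion-freeness, we get $r(g)\geq1$.

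\emph{The final sentence.} A nontrivial $L$ has some $g\ne1$, and the previous steps give the required e-definable free abelian subgroup of positive finite rank.
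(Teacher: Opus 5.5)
Your proposal is correct and follows essentially the same route as the paper: torsion-freeness and abelianness of $\Cent_L(g)$ come from full residual freeness via cyclic centralizers in free groups, finite generation of abelian subgroups is cited from the Kharlampovich--Myasnikov embedding into iterated extensions of centralizers, and maximality and the defining equation $[x,g]=1$ are read off directly. One small point of terminology: you prove commutative transitivity, which is weaker than the CSA property you name, but only commutative transitivity is needed here.
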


\begin{proof}
Residual freeness implies that $L$ is torsion-free: a nonidentity torsion
element could not have nonidentity image in a free group.
We first show that $\Cent_L(g)$ is abelian. If $x,y\in\Cent_L(g)$ and
$[x,y]\ne1$, choose $\varphi:L\to F$ that is nontrivial on both $g$ and
$[x,y]$. The elements $\varphi(x)$ and $\varphi(y)$ lie in the cyclic
centralizer of $\varphi(g)\ne1$ in $F$. They therefore commute,
contradicting $\varphi([x,y])\ne1$.

Every abelian subgroup of a limit group is finitely generated.
This follows from the embedding in a finite sequence of extensions
of centralizers of a finitely generated free group established
in~\cite{KM1998}. Hence $\Cent_L(g)$ is finitely generated,
torsion-free and abelian. It contains the infinite cyclic subgroup
$\langle g\rangle$, so its rank is positive.
Any abelian subgroup containing $\Cent_L(g)$ still centralizes $g$,
which proves maximality. The equation $[x,g]=1$ defines this
centralizer in $L$.
\end{proof}

Another large subclass of ${\mathcal ED}$ is formed by right-angled Artin groups. 

Let $\Gamma=(V,E)$ be a finite simple graph with $V\ne\varnothing$. Its
right-angled Artin group is
\[
A_\Gamma=\langle V\mid [u,v]=1\text{ whenever }\{u,v\}\in E\rangle.
\]
For $S\subseteq V$, write $A_S=\langle S\rangle\leq A_\Gamma$; it is
naturally the right-angled Artin group on the induced subgraph with vertex
set $S$. Indeed, sending every vertex outside $S$ to $1$ defines a
retraction onto that group and proves that its natural map into
$A_\Gamma$ is injective.
A \emph{clique} is a set of pairwise adjacent vertices. It is
\emph{maximal} if no strictly larger clique contains it.

\begin{proposition}\label{thm:raag}
Let $\Sigma=\{v_1,\ldots,v_r\}$ be a maximal clique in $\Gamma$.
Then $r\geq1$ and
\begin{equation}\label{eq:clique-centralizer}
A_\Sigma\cong\Z^r,\qquad
A_\Sigma=\Cent_{A_\Gamma}(v_1,\ldots,v_r).
\end{equation}
In particular, $A_\Sigma$ is maximal abelian and is e-definable by
\[
[x,v_1]=\cdots=[x,v_r]=1.
\]
Thus every nontrivial right-angled Artin group on a finite graph contains
an e-definable free abelian subgroup of positive finite rank.
\end{proposition}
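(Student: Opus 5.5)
We have to show three things: $A_\Sigma\cong\Z^r$ with $r\ge1$, the centralizer identity $A_\Sigma=\Cent_{A_\Gamma}(v_1,\ldots,v_r)$, and that $A_\Sigma$ is maximal abelian and e-definable.

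\textbf{Step 1: $A_\Sigma\cong\Z^r$.} Since $V\neq\varnothing$, any single vertex is a clique. Hence maximal cliques are nonempty, and $r\geq1$. The induced subgraph on $\Sigma$ is complete, so its right-angled Artin group is the free abelian group on $\Sigma$. By the retraction described just before the statement, the natural map of that group into $A_\Gamma$ is injective. Therefore $A_\Sigma\cong\Z^r$.

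\textbf{Step 2: the centralizer identity (the main step).} The inclusion $A_\Sigma\subseteq\Cent(v_1,\ldots,v_r)$ is immediate, so the work is the reverse inclusion. I would invoke the standard centralizer theorem for right-angled Artin groups (Servatius; Baudisch). In the form needed here it says that the centralizer of a single vertex $v$ is
\[
\Cent_{A_\Gamma}(v)=A_{\mathrm{st}(v)}, \qquad \mathrm{st}(v)=\{v\}\cup\mathrm{lk}(v).
\]
Granting this, I intersect over the clique. Special subgroups satisfy $A_S\cap A_{S'}=A_{S\cap S'}$; this follows from normal forms, or from the retractions, since an element of $A_S$ is fixed by the retraction onto $A_{S'}$ exactly when it lies in $A_{S\cap S'}$. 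This gives
\[
\Cent(v_1,\ldots,v_r)=A_{\bigcap_i\mathrm{st}(v_i)}.
\]
The set $\bigcap_i\mathrm{st}(v_i)$ consists of $\Sigma$ together with the vertices adjacent to every $v_i$. By maximality of the clique $\Sigma$ there are no such extra vertices, so the intersection is exactly $\Sigma$, and the identity follows.

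If I want to avoid citing Servatius, I would prove $\Cent(v)=A_{\mathrm{st}(v)}$ directly. I would write $A_\Gamma$ as the amalgam $A_{\mathrm{st}(v)}*_{A_{\mathrm{lk}(v)}}A_{V\setminus\{v\}}$ and use that $v$ is central in $A_{\mathrm{st}(v)}$. Reduced forms in the amalgam then show that any $x$ commuting with $v$ lies in the factor $A_{\mathrm{st}(v)}$, because $v$ does not lie in the edge group $A_{\mathrm{lk}(v)}$. I expect this amalgam normal-form argument to be the main obstacle if done from scratch.

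\textbf{Step 3: maximal abelianness and e-definability.} Any abelian subgroup containing $A_\Sigma$ centralizes $v_1,\ldots,v_r$, so by Step 2 it lies in $A_\Sigma$; thus $A_\Sigma$ is maximal abelian. The displayed equality is precisely an e-definition by the equations $[x,v_i]=1$, with no auxiliary variables, exactly as in Proposition~\ref{thm:maximal}. Finally, a finite nonempty graph always has a maximal clique. So every nontrivial right-angled Artin group on a finite graph contains an e-definable subgroup isomorphic to $\Z^r$ with $r\ge1$, and hence lies in ${\mathcal ED}$.
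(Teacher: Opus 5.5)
Your proof is correct and follows essentially the same route as the paper: both identify $\Cent_{A_\Gamma}(v_1,\ldots,v_r)$, via a standard centralizer theorem, with the special subgroup on the vertices commuting with every $v_i$, and then use maximality of the clique to see that this vertex set is exactly $\Sigma$. The only difference is the cited result: the paper cites the centralizer of the special subgroup $A_\Sigma$ directly (Duncan--Kazachkov--Remeslennikov), whereas you cite the single-vertex case $\Cent_{A_\Gamma}(v)=A_{\mathrm{st}(v)}$ and intersect the stars using your retraction argument for $A_S\cap A_{S'}=A_{S\cap S'}$, which is valid.
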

\begin{proof}
By \cite[Lemma~2.3, p.~156, and Lemma~2.1(7), p.~155]{DKR2008},
the centralizer of $A_\Sigma$ is generated by the vertices that
commute with every vertex of $\Sigma$. These are precisely the
vertices of $\Sigma$, since a vertex outside $\Sigma$ with this
property would enlarge the clique. Thus
$\Cent_{A_\Gamma}(A_\Sigma)=A_\Sigma$.
Since the induced graph on $\Sigma$ is complete,
$A_\Sigma\cong\Z^r$. The centralizer equality proves maximal
abelianness and gives the stated finite system of defining equations.
\end{proof}

Now we consider free products with amalgamation
\[
G=A*_C B.
\]
 We begin with two centralizer
statements.  
If $C$ is malnormal in one factor, then every hyperbolic element has an
infinite cyclic centralizer. If $C$ is abelian, then every element $ab$, with
$a\in A\setminus C$ and $b\in B\setminus C$, has an infinite abelian
centralizer; this centralizer is finitely generated when $C$ is finitely
generated.  These results are known, see below.

Recall that $C$ is \emph{malnormal in $A$} if
\[
C\cap uCu^{-1}=\{1\}\qquad\text{for every }u\in A\setminus C.
\]

\begin{proposition}[Malnormal amalgamation in one factor]\label{thm:malnormal}
Suppose that $C$ is proper in both $A$ and $B$, and that $C$ is malnormal
in at least one of the factors. Then every hyperbolic element $g\in G$
has an infinite cyclic centralizer. In particular, for every
$a\in A\setminus C$ and $b\in B\setminus C$,
\[
\Cent_G(ab)=\langle ab\rangle\cong\Z.
\]
Thus $G$ contains an infinite cyclic subgroup defined, with a parameter,
by the single equation $[x,ab]=1$.
\end{proposition}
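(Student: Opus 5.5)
We will use Bass--Serre theory for $G=A*_CB$ acting on its Bass--Serre tree $\mathcal T$. Vertices of $\mathcal T$ are the cosets $gA$ and $gB$, and edges are the cosets $gC$. An element is \emph{hyperbolic} if it is not conjugate into $A$ or $B$; equivalently, it fixes no vertex of $\mathcal T$.

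\textbf{Step 1: the axis and the centralizer.} A hyperbolic $g$ has an invariant axis $L_g\subseteq\mathcal T$ on which it translates by a positive length $\ell$. The first case to handle is $g=ab$ with $a\in A\setminus C$ and $b\in B\setminus C$: the path $\dots,A,\ B,\ aB,\ abA,\dots$ is a geodesic on which $ab$ translates by $2$. Any $x\in\Cent_G(g)$ maps $L_g$ to $L_{x^{-1}gx}=L_g$, so $L_g$ is $x$-invariant. The centralizer therefore acts on the line $L_g$ by automorphisms commuting with the translation $g$, hence by translations (a reflection would conjugate $g$ to $g^{-1}$). This gives a homomorphism $\tau:\Cent_G(g)\to\Z$, recording translation length along $L_g$, whose kernel $K$ fixes $L_g$ pointwise.

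\textbf{Step 2: the kernel is trivial (main obstacle).} Here malnormality enters. Suppose $C$ is malnormal in $A$. Let $K_0=\bigcap_{e\in L_g}\mathrm{Stab}(e)$, which contains $K$.

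Pick a vertex $v=hA$ on $L_g$ with adjacent axis edges $e_1=hc_1C$ and $e_2=hc_2C$, where $c_1,c_2\in A$ lie in different cosets of $C$. Then
\[
K\leq h(c_1Cc_1^{-1}\cap c_2Cc_2^{-1})h^{-1}.
\]
This intersection is conjugate to $C\cap uCu^{-1}$ with $u=c_1^{-1}c_2\in A\setminus C$. By malnormality it is trivial, so $K=1$.

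Since $L_g$ is a bi-infinite line through alternating $A$- and $B$-type vertices, such a vertex always exists. This is why malnormality in only one factor suffices.

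\textbf{Step 3: conclusion.} Since $K=1$, $\tau$ embeds $\Cent_G(g)$ into $\Z$, so $\Cent_G(g)$ is infinite cyclic; it is infinite because it contains $g$, which has infinite order as a hyperbolic element. It remains to show $\Cent_G(ab)=\langle ab\rangle$ rather than a proper overgroup. Suppose $x\in\Cent_G(ab)$ generates it with $x^m=ab$. Then $x$ translates $L_{ab}$ by $2/m$, an even integer since $x$ preserves vertex types, so $m=\pm1$.

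Finally, $\Cent_G(ab)$ is defined by the single equation $[x,ab]=1$ with parameter $ab$, which gives the e-definable infinite cyclic subgroup and shows $G\in\mathcal{ED}$.
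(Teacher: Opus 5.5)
Your proposal is correct and follows essentially the same route as the paper. You use the axis of $g$, the translation homomorphism $\tau$ on $\Cent_G(g)$, triviality of its kernel via the two distinct axis edges at a vertex of the malnormal type, and evenness of translation lengths to force $\tau(\Cent_G(ab))=2\Z$.

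One small slip: the displayed path should read $\ldots,B,A,aB,abA,\ldots$, since two $B$-type vertices such as $B$ and $aB$ are never adjacent. You should also state that this path has no backtracking precisely because $a\notin C$ and $b\notin C$.
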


\begin{proof}
We use the Bass--Serre tree $\mathcal T$ of the amalgam: its vertices
are $G/A\sqcup G/B$, and the edge $uC$ joins $uA$ to $uB$.
The action preserves the two vertex types and has no inversions.
For the tree construction and the axis theorem, see
\cite[Chapter~I, 4.1, Theorem~7, and 6.4, Proposition~24]{Serre}.
By symmetry, assume that $C$ is malnormal in $A$.

Let $L$ be the axis of a hyperbolic element $g$.
Every element of $\Cent_G(g)$ preserves $L$ and acts on it by a
translation: a reflection would conjugate the nonzero translation
induced by $g$ to its inverse. Signed translation distance therefore
defines a homomorphism
\[
\tau:\Cent_G(g)\longrightarrow\Z.
\]
An element of its kernel fixes $L$ pointwise. At any vertex of $L$
of type $A$, it fixes the two distinct incident edges of $L$.
Their stabilizers have trivial intersection, since, after conjugation,
this intersection is $C\cap uCu^{-1}$ for some $u\in A\setminus C$.
Thus $\tau$ is injective. Its image contains $\tau(g)\ne0$, so
$\Cent_G(g)$ is infinite cyclic.

Now put $t=ab$. The length-two path from $B$ to $tB=aB$ passes through
$A$ and contains the standard edge $C$. Its translates by the powers
of $t$ concatenate without backtracking, since $a\notin C$ and
$b\notin C$. They form the axis of $t$, on which $t$ translates by
two edges. Since the action preserves vertex types, every translation
induced by $\Cent_G(t)$ has even length. Orienting the axis so that
$\tau(t)=2$, we obtain $\tau(\Cent_G(t))=2\Z$.
Injectivity of $\tau$ now gives $\Cent_G(t)=\langle t\rangle$.
This subgroup is defined by $[x,t]=1$.

The related result \cite[Proposition~1]{GKM1995} states that malnormal
subgroups of the other factor remain malnormal in the amalgam;
the centralizer assertion above follows from the axis argument.
\end{proof}

\begin{proposition}[Abelian amalgamation]\label{thm:abelian}
Suppose that $C$ is abelian and proper in both $A$ and $B$. For every
$a\in A\setminus C$ and $b\in B\setminus C$, the subgroup
\[
\Cent_G(ab)=\bigl(C\cap\Cent_G(ab)\bigr)\times\langle ab\rangle
\]
is infinite abelian and is defined by the equation $[x,ab]=1$.
If $C$ is finitely generated, this centralizer is finitely generated
abelian of positive rank. If $C\cong\Z^r$, where $r\geq0$, then
\[
\Cent_G(ab)\cong\Z^s
\qquad\text{for some }1\leq s\leq r+1.
\]
\end{proposition}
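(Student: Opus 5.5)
We argue on the Bass--Serre tree $\mathcal T$ of $G=A*_C B$, as in the proof of Proposition~\ref{thm:malnormal}. Put $t=ab$ with $a\in A\setminus C$, $b\in B\setminus C$. The path of length two from $B$ to $tB=aB$ passes through the vertex $A$, and its $\langle t\rangle$-translates form the axis $L$ of $t$, on which $t$ translates by two edges. Hence $t$ is hyperbolic and has infinite order. Every $x\in\Cent_G(t)$ preserves $L$ and acts on it by a translation: a reflection would invert the translation of $t$. Vertex types are preserved, so signed translation length gives a homomorphism
\[
\tau:\Cent_G(t)\to 2\Z,\qquad \tau(t)=2.
\]
This homomorphism is onto and splits via $t\mapsto t$. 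It follows that $\Cent_G(t)=\ker\tau\rtimes\langle t\rangle$.

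\textbf{Step 1: the kernel.} An element of $\ker\tau$ fixes $L$ pointwise. In particular it fixes the standard edge $C$, so $\ker\tau\leq C$, and therefore $\ker\tau=C\cap\Cent_G(t)$.

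\textbf{Step 2: the product is direct and abelian.} The kernel lies in the abelian group $C$, so it is abelian. Its elements commute with $t$ by definition, so the semidirect product is a direct product:
\[
\Cent_G(t)=\bigl(C\cap\Cent_G(t)\bigr)\times\langle t\rangle.
\]
This group is abelian and infinite because $t$ has infinite order. It is defined by $[x,ab]=1$ by definition of the centralizer.

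\textbf{Step 3: finiteness statements.} If $C$ is finitely generated abelian, then the subgroup $C\cap\Cent_G(t)$ is finitely generated, so the centralizer is finitely generated abelian of rank $\operatorname{rank}(C\cap\Cent_G(t))+1\geq1$. If $C\cong\Z^r$, then $C\cap\Cent_G(t)\cong\Z^{s-1}$ with $0\le s-1\le r$, giving $\Cent_G(ab)\cong\Z^s$ with $1\le s\le r+1$.

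The main point needing care is the claim that every element of $\Cent_G(t)$ acts on $L$ without reflection and that the pointwise stabilizer of $L$ lies in $C$. The first reduces to the uniqueness of the axis, which $\Cent_G(t)$ preserves because it commutes with $t$. The second holds because $C$ is exactly the stabilizer of the standard edge, and that edge lies on $L$. Both facts are the same ones already used in Proposition~\ref{thm:malnormal}. There, malnormality forced the kernel to be trivial; here we only record that the kernel lies in $C$, and abelianness of $C$ supplies the commutativity.
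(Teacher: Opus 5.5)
Your proposal is correct and follows essentially the same route as the paper: you use the Bass--Serre axis $L$ of $t=ab$ and the even signed translation length on $\Cent_G(t)$, and you observe that elements acting trivially on $L$ fix the standard edge and so lie in $C\cap\Cent_G(t)$. The paper writes the decomposition as $zt^{-n}\in C\cap\Cent_G(t)$ rather than as $\ker\tau\rtimes\langle t\rangle$, but the argument is the same; the reverse inclusion $C\cap\Cent_G(t)\subseteq\ker\tau$, which you leave implicit, holds because a translation of $L$ that fixes an edge of $L$ is trivial.
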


\begin{proof}
Put $t=ab$ and consider the Bass--Serre tree described in the preceding
proof. The translates of the path $B,A,aB=tB$ again concatenate
without backtracking. Hence $t$ has translation length two and its
axis $L$ contains the standard edge $C$, whose stabilizer is $C$.
These facts use only $a\notin C$ and $b\notin C$, not malnormality;
see \cite[Chapter~I, 4.1, Theorem~7, and 6.4, Proposition~24]{Serre}.

Every $z\in\Cent_G(t)$ acts on $L$ by a translation. Its signed
translation distance is even, because the action preserves vertex
types. Consequently, for some $n\in\Z$, the element $zt^{-n}$ fixes
$L$ pointwise. It fixes the standard edge and still centralizes $t$,
so
\[
zt^{-n}\in K:=C\cap\Cent_G(t).
\]
It follows that $\Cent_G(t)=K\langle t\rangle$. The two factors
commute by the definition of $K$, and their intersection is trivial:
every nonidentity power of $t$ translates $L$, whereas every element
of $K$ fixes the standard edge. Thus
\[
\Cent_G(t)=K\times\langle t\rangle.
\]

Since $K\leq C$ and $C$ is abelian, this centralizer is abelian.
It is infinite because $t$ has infinite order, and it is defined by
$[x,t]=1$. If $C$ is finitely generated, then so is its subgroup $K$,
giving a finitely generated abelian centralizer of positive rank.
If $C\cong\Z^r$, then $K\cong\Z^q$ for some $0\leq q\leq r$,
and the displayed direct product has rank $q+1$.
For the related algebraic description of commuting pairs, see
\cite[Theorem~4.5]{MKS}.
\end{proof}

\section{The Diophantine problem in various wreath products}

In this section we combine Theorems \ref{thm:main},  \ref{cor:wreath-diophantine-reduction}, and \ref{main} to prove undecidability of the Diophantine problem in various wreath products of groups. 

\begin{theorem} \label{th:general}
Let $G=B\wr H$ be a restricted wreath product of finitely generated
groups $B$ and $H$. Then the Diophantine problem in $G$ is undecidable
if at least one of the following conditions holds.
\begin{enumerate}[label=\textup{(\roman*)}]
\item either $\Dio(H)$ or $\Dio(B)$ is undecidable;
\item $B, H \in {\mathcal I}$;
\item $B \in \mathcal I$, $H \in {\mathcal ED}$.
\end{enumerate}
\end{theorem}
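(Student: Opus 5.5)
The plan is to dispatch the three conditions separately. Each reduces to a result already proved, with the retract principle (Proposition~\ref{prop:retract-principle}) transferring undecidability from a retract to the ambient group.

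For condition \textup{(i)}, Theorem~\ref{thm:main} gives $\Dio(B)\leq_P\Dio(G)$ and $\Dio(H)\leq_P\Dio(G)$ for arbitrary finitely generated $B$ and $H$. Undecidability of either $\Dio(B)$ or $\Dio(H)$ therefore forces undecidability of $\Dio(G)$. For condition \textup{(ii)}, the groups $B$ and $H$ are finitely generated by hypothesis, so $B,H\in\mathcal I_{\mathrm{fg}}$. Theorem~\ref{cor:wreath-diophantine-reduction}, via the retract $\Z\wr\Z$ of Proposition~\ref{prop:indicable-wreath-retract}, then applies directly.

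The only case needing a small new argument is \textup{(iii)}, where $B$ is indicable but not assumed abelian, so Theorem~\ref{main} cannot be applied to $B\wr H$ directly. I would first replace $B$ by an abelian retract. By Proposition~\ref{prop:indicable-characterization}, $B$ has an infinite cyclic retract $C\cong\Z$. Trivially, $H$ is a retract of itself. Theorem~\ref{retract}, with $D=H$, then shows that $B\wr H$ has a retract isomorphic to $C\wr H\cong\Z\wr H$. This retract is finitely generated, being a homomorphic image of $B\wr H$, so Proposition~\ref{prop:retract-principle} gives $\Dio(\Z\wr H)\leq_P\Dio(B\wr H)$.

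It remains to see that $\Dio(\Z\wr H)$ is undecidable, and this is exactly Theorem~\ref{main}. There $\Z$ is an infinite finitely generated abelian group. Since $H\in\mathcal{ED}$, $H$ contains an e-definable nontrivial free abelian subgroup $A$ of finite rank, which is infinite and finitely generated. I expect no real obstacle. The only point to check is that the hypotheses match exactly: parameters are allowed in the e-definition of $A$, as the definition of $\mathcal{ED}$ and the proof of Lemma~\ref{definableInG} both permit, and all groups involved are finitely generated, so the complexity conventions apply. Combining the two reductions yields undecidability of $\Dio(G)$ in case \textup{(iii)}.
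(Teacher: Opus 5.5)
Your proposal is correct and follows the paper's proof essentially step for step. Cases (i) and (ii) are read off from Theorems~\ref{thm:main} and~\ref{cor:wreath-diophantine-reduction}. In case (iii) you do what the paper does: pass to an infinite cyclic retract $C\leq B$, use Theorem~\ref{retract} with the identity on $H$ to obtain the retract $C\wr H$, apply Theorem~\ref{main} to it, and lift undecidability by Proposition~\ref{prop:retract-principle}.
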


\begin{proof}
Cases \textup{(i)} and \textup{(ii)} are
Theorems~\ref{thm:main} and~\ref{cor:wreath-diophantine-reduction},
respectively. In case \textup{(iii)}, the indicability of $B$ gives an
infinite cyclic retract $C\leq B$ by
Proposition~\ref{prop:indicable-characterization}.
Theorem~\ref{retract}, applied to the retraction $B\to C$ and the
identity of $H$, makes $C\wr H$ a retract of $B\wr H$.
Since $H\in{\mathcal ED}$, Theorem~\ref{main} implies that
$\Dio(C\wr H)$ is undecidable. The retract principle
(Proposition~\ref{prop:retract-principle}) gives undecidability in
$B\wr H$.
\end{proof}

By item \textup{(i)}, if either of the groups $B$ and $H$ has an  undecidable Diophantine problem, then $\Dio(B\wr H)$ is also undecidable.
We therefore consider infinite finitely generated groups $B$ and $H$
with decidable Diophantine problems. The following families provide
examples (in each item, we restrict to infinite finitely generated
groups):

\begin{enumerate}[label=\textup{(\arabic*)}]
\item Virtually abelian groups (Ershov~\cite{Ershov1972});
\item Free groups (Makanin~\cite{Makanin1982});
\item Hyperbolic groups (Rips and Sela~\cite{RipsSela1995} in the
torsion-free case, and Dahmani and Guirardel~\cite{DG2010} with torsion);
\item Toral relatively hyperbolic groups (Dahmani~\cite{Dahmani2009});
\item Right-angled Artin groups on finite graphs
(Diekert and Muscholl~\cite{DiekertMuscholl2006}).
\end{enumerate}

By Section~\ref{se:ED}, every infinite group in these families
belongs to ${\mathcal ED}$. Hence, for every such $H$ and every finitely
generated indicable group $B$, the Diophantine problem in $B\wr H$
is undecidable by Theorem~\ref{th:general}\textup{(iii)}.

\end{document}